\bmdefine\taub{\tau}
\bmdefine\mub{\mu}
\bmdefine\lab{\lambda}
\bmdefine\varsigmab{\varsigma}
 \numberwithin{equation}{section}
\newcommand{\R}{\mathbb{R}}
\newcommand{\E}{\mathbb{E}}
\newcommand{\N}{\mathbb{N}}
\newcommand{\F}{\mathfrak{F}}
\newcommand{\1}{\mathbf{1}}
\newcommand{\Prob}{\mathbb{P}}
\newcommand{\Q}{\mathbb{Q}}
\newcommand{\ra}{\rightarrow}
\newcommand{\dx}{\mathrm{d}}
\newcommand{\la}{\lambda}
\bmdefine\thetab{\vartheta}
\theoremstyle{plain}
\newtheorem{thm}{Theorem}[section]
\newtheorem{lem}[thm]{Lemma}
\newtheorem{prop}[thm]{Proposition}
\newtheorem{cor}[thm]{Corollary}
\theoremstyle{definition}
\newtheorem{exmp}[thm]{Example}
\theoremstyle{remark}
\newtheorem{standassume}[thm]{Standing Assumption}
\newtheorem{assume}[thm]{Assumption}
\newtheorem{rem}[thm]{Remark}
\newtheorem{open}[thm]{Open Question}
\title{
DISTRIBUTION OF THE TIME TO EXPLOSION  FOR ONE-DIMENSIONAL DIFFUSIONS   
\thanks{We thank  Bahman Angoshtari, Peter Carr,  Zhenyu Cui, David Elworthy, Robert Fernholz,  Tomoyuki Ichiba, Kostas Kardaras, Monique Jeanblanc,  Alex Mijatovi\'c, Soumik Pal, Camelia Pop,    Paavo Salminen, Mykhaylo Shkolnikov, Kawei Wang, and    Phillip Whitman  for their insightful remarks that improved the manuscript.  We thank Alex Mijatovi\'c for raising the issues of strict positivity and full support  and for prompting us to think about them (Subsections~\ref{SP} and \ref{SS: FSP}, respectively).  We are grateful to the  referees for their meticulous reading of the paper and their many and incisive suggestions.  J.R.~acknowledges generous support from the Oxford-Man Institute of Quantitative Finance, University of Oxford, where a major part of this work was completed.
\newline $~~~~$ The   problem discussed here was suggested to us by Marc Yor, who also gave us generous  expert advice on several of its aspects. We dedicate the paper to his memory with deep sorrow at his passing. } 
 }
\author{  
\textsc{IOANNIS KARATZAS} \thanks{
Department of Mathematics,  Columbia University, New York, NY 10027 (E-mail: {\it ik@math.columbia.edu}), and       \textsc{Intech} Investment Management,  One Palmer Square, Suite 441, Princeton, NJ 08542    (E-mail:    {\it ik@enhanced.com}). Research   supported in part by  the National Science Foundation under  grant NSF-DMS-14-05210.}  
 \and
\textsc{JOHANNES RUF}                \thanks{ 
Department of Mathematics, University College London, Gower Street, London WC1E 6BT, United Kingdom (E-mail:    {\it j.ruf@ucl.ac.uk}).
          }
                                      }
\begin{document}

\maketitle

\centerline{\it To the Memory of Marc Yor  
}

 \medskip

\begin{abstract}
\noindent
We study the distribution of the time to explosion for one-dimensional diffusions.  We relate this question to the  computation of expectations of  suitable nonnegative local martingales.  Moreover, we characterize the distribution function of the time to explosion as the minimal solution to a certain Cauchy problem for an appropriate parabolic differential equation; this  leads to alternative characterizations of Feller's  criterion for explosions. We discuss in detail several examples  for which it is possible to obtain analytic expressions for the corresponding distribution of the time to explosion,  using the methodologies developed in the paper.
\end{abstract}

\bigskip
\noindent{\it Keywords and Phrases:}  Explosion; Girsanov  theorem; local martingale; pathwise solutions of SDEs; minimal solutions of  Cauchy problems for parabolic PDEs;   Bessel processes.

\bigskip
\noindent{\it AMS 2000 Subject Classifications:}  35C99; 35K10;  60J35; 60J60; 60H10; 60H30.

\input amssym.def
\input amssym

\section{Introduction and summary}

Precise conditions for whether or not a one-dimensional   diffusion process explodes in finite time  have been developed, most notably  by William Feller. To the best of our knowledge, and rather surprisingly, the distribution of the explosion time  has rarely -- if at all -- been the subject of  investigation (a notable exception is the appendix in \citet{Elworthy2010}, particularly its Theorem~9.1.3(ii)).   With the present  work  we hope to help close this gap  or,  at the very least, to narrow it.  

In Section~\ref{S:essentials}, we recall relevant facts of the theory of one-dimensional diffusions.  
In Section~\ref{S transformation},  we recall and generalize a result of \citet{McKean_1969} that associates the distribution of the explosion time to the expectation of a related nonnegative local martingale.   In Section~\ref{S:analytic}, we study analytic properties of the distribution function such as continuity, strict positivity,  and full support.
In Section~\ref{S:DE}, we characterize the  tail  $ (t, \xi) \longmapsto \Prob_\xi (S>t)$ of the probability distribution function of the time-to-explosion $S$, viewed as a function of time $\,t\,$ and starting position $  \xi$, as the smallest nonnegative solution of an appropriate    partial differential equation of parabolic type. We also derive a similar characterization for the Laplace transform of this tail probability distribution function  in terms of an ordinary differential equation, and present   alternative characterizations of Feller's criterion for explosions. In Section~\ref{S:examples},  we provide several examples to illustrate the methodologies developed in Sections~\ref{S transformation}--\ref{S:DE}.  In Appendices~\ref{FT} and \ref{Doss}, we recall some useful facts regarding the   Feller test and  the  Lamperti  transformation. Finally, in Appendix~\ref{A:uniqueness}, we provide a technical uniqueness result for stopped   diffusions in natural scale.

Although not much work seems to  exist on the distribution of the time to explosion, there is a huge literature   on the computation of first-passage times by diffusions. We refer  to \citet{Pitman_Yor_2003} and Section~2 in \citet{Going_Yor} for several pointers to this literature. If a   boundary   is \emph{regular}, the diffusion can be extended beyond that boundary and the time to explosion  can be represented as a first-passage time  for a regular diffusion; however, for an \emph{exit} boundary such an extension is not possible (one-dimensional diffusions cannot explode at {\it natural} or {\it entrance} boundaries). We refer the reader to Section~16.7 in \citet{Breiman1992} for a classification of boundary behavior for one-dimensional diffusions.

\section{Some essentials of one-dimensional diffusions}  \label{S:essentials}

We fix an open interval $I = ( \ell, r)$ with $- \infty \le \ell < r \le \infty$ and consider the stochastic differential equation
\begin{equation}
\label{1}
\mathrm{d} X(t) = \mathfrak{s} (X(t)) \big[\, \mathrm{d} W (t) + \mathfrak{b} (X(t)) \,\mathrm{d} t \,\big], \qquad t \geq 0, \qquad X(0) = \xi\,,
\end{equation}
where $ \xi \in I $ and $ W(\cdot) $ denotes a Brownian motion.  We shall impose throughout the paper the following: 

\begin{standassume}  
\label{A2}
{\it The 	 functions $\,\mathfrak{b}: I \rightarrow \R\,$ and $\,\mathfrak{s} : I \rightarrow \R \setminus \{ 0 \}$ are   measurable and satisfy  
}
\begin{equation}
\label{2}
\int_K \left(\frac{1}{ \mathfrak{s}^2 (y)} + \left|\frac{\mathfrak{b} (y)}{ \mathfrak{s} (y)}\right|\right)  \dx y \,  <  \, \infty  \qquad \hbox{{\it for every compact set }}\, K \subset I.\qquad 
\end{equation}
  In other words, we shall be assuming   that both   $1 /  \mathfrak{s}^2 (\cdot)$ and the ``local mean/variance ratio'' function
\begin{equation}
\label{f}
\mathfrak{ f} (\cdot) := { \mathfrak{b}(\cdot) \over  \mathfrak{s} (\cdot)}= {  \mathfrak{b}(\cdot)   \mathfrak{s} (\cdot)\over \mathfrak{s}^2 (\cdot)}
\end{equation} are locally integrable over the interval $I$.  \qed
\end{standassume}

In the sequel we shall also need  the anti-derivative  
\begin{equation}
\label{F_anti}
F(\cdot) \,:=\, \int_c^{\cdot} \mathfrak{ f} (x) \dx x 
\end{equation}
  of the function  $\,\mathfrak{f}(\cdot)$, for some   arbitrary, but fixed, constant $c \in I$.

From the   arguments   in \citet{Engelbert_Schmidt_lecture, Engelbert_Schmidt_1991} or Theorem~5.5.15 in \citet{KS1},  the standing assumption implies that the stochastic differential equation \eqref{1} admits a weak solution, unique in the sense of the probability distribution and defined up until the ``explosion time"
\begin{equation}
\label{3}
S \,:=\,  \lim_{n \uparrow \infty}   S_n\,, \qquad S_n  \,:=\, \inf \big\{  t \ge 0: X(t) \notin ( \ell_n, r_n) \big\},
\end{equation}
 for any monotone sequences $\{ \ell_n\}_{n \in \N}$, $\{ r_n\}_{n \in \N}$ with $ \ell < \ell_n < r_n < r$ and $\lim_{n \uparrow \infty} \ell_n = \ell$, $\lim_{n \uparrow \infty} r_n =r$.
The endpoints of the interval $I= ( \ell, r)$ are absorbing for   $X(\cdot)$, that is, we suppose that on $ \{ S < \infty\}$ this process stays after $S$ at the endpoint where it exits the interval.   Feller's test of explosions yields an analytic characterization whether such an explosion occurs; see Appendix~\ref{FT} for a review.   

We shall suppose that this weak solution has been constructed on some filtered probability space $ (\Omega, \F, \Prob)$, $\mathbb{F}= \{ \F (t) \}_{0 \le t < \infty}$ which satisfies the usual conditions of right-continuity and augmentation by $  \Prob-$null sets. Because of uniqueness in the sense of the probability distribution, the state process $X(\cdot)$ in this solution has the strong Markov property.   A thorough exposition and study of such equations appears in the monograph by \citet{Cherny_Engelbert}.

\subsection{A diffusion in natural scale} 
\label{sec1.2}

Let $X^o(\cdot)$ denote the state process of a solution to  the equation \eqref{1} without drift but with the same state space $   I = (\ell, r)\,$, that is, 
\begin{equation}
\label{13}
X^o (\cdot)  = \xi + \int_0^{ \cdot}  \mathfrak{s} \big( X^o (t) \big) \, \dx W^o (t)\,,
\end{equation}
where $W^o(\cdot)$ denotes a Brownian motion.
Thanks to our assumptions on   $  \mathfrak{s} (\cdot)$, this equation admits on some filtered probability space $ (\Omega^o, \F^o, \Prob^o),$ $\mathbb{F}^o= \{ \F^o (t) \}_{0 \le t < \infty}$ a weak solution $\,(X^o(\cdot), W^o(\cdot))\,$ which   is unique in the sense of the probability distribution \citep[see Theorem~5.5.7 in][]{KS1} up until an explosion time $\,S^o\,$, and has no absorbing points in the interval $I$ \citep[see Corollary~4.20 in][]{Engelbert_Schmidt_1991}. Here 
$\,S^o\,$, $S^o_n$ denote the stopping times   defined as in \eqref{3} with $X(\cdot)$ replaced by $X^o(\cdot)$.  We recall from Theorem~5.5.4  in \citet{KS1}  that $\,\Prob^o (S^o= \infty)=1\,$ holds if $\ell =-\infty$ and $r = \infty\,$.  Once again, the endpoints of the interval $I= ( \ell, r)$ are absorbing for $X^o(\cdot)$, that is, we suppose that on $ \{ S^o < \infty\}$ the process stays after $S^o$ at the endpoint where it exits.

\smallskip
In order to fix ideas and notation for what follows, let us summarize the construction of  this solution. We start with $ \xi \in I$ and a standard Brownian motion $B (\cdot)$, and define the stopping time 
\begin{equation}
\label{TAU}
 \taub  \,:=\,  \lim_{n \uparrow \infty}   \taub_n\,, \qquad  \taub_n \,:=\,  \inf \big\{  t \ge 0: \xi + B(t) \notin ( \ell_n, r_n) \big\}
\end{equation}
in the spirit and the notation of \eqref{3}. We introduce also  the  time change $$\Gamma (\theta) := \int_0^{\theta}  \frac{\dx r}{ \mathfrak{s}^2 ( \xi + B (r))} \,, \qquad 0 \le \theta < \taub
$$
with $\Gamma(\theta) := \infty$ for all $\theta \in [\taub,\infty)$. Next, we construct the inverse $ A (\cdot)$ of $ \Gamma(\cdot)$, and from it the state process $ X^o (\cdot) \equiv  X^o (\cdot\,; \xi) := \xi + B(A (\cdot))$. Finally, we construct the Brownian motion
$$
W^o(t) \,:=\, \int_0^{A  (t)} { \dx B (r) \over  \mathfrak{s}  ( \xi + B (r))} ,\qquad 0 \le t < S^o = \Gamma(\taub -)
$$
and check that the pair $ (W^o(\cdot), X^o (\cdot))$ satisfies \eqref{13}  up until the explosion time
\begin{equation}
\label{115}
S^o  = \Gamma (\taub -) = \int_{ 0}^{ \taub } { \dx \theta \over  \mathfrak{s}^2 ( \xi + B  (\theta))} = \int_I { 2 L (\taub, y)  \over \mathfrak{s}^2 (y)} \dx y\,;
\end{equation}
here $L (\taub, y)$ is the local time accumulated up to $ \taub$ by the Brownian motion $\xi + B(\cdot)$ at the site $y \in \R$.

\subsection{Transformation of scale}  
\label{SS scale}

Under the   conditions of  \eqref{2} and with the notation of  \eqref{f},  \eqref{F_anti}, the   diffusion $\, X(\cdot)\, $ of  \eqref{1} has {\it scale function}
\begin{equation}
\label{scale}
\mathfrak{p} (x) \,:=\, \int_c^x \exp \left( - 2  F (z) \right)  \mathrm{d} z\,.
\end{equation}
This   is a strictly increasing,   continuously differentiable bijection of the interval $\, I = ( \ell, r)\,$ onto the interval $\, J = \big( \lambda, \varrho)\,$ with endpoints $\, \lambda := \mathfrak{p} (\ell +)\,$ and $ \varrho := \mathfrak{p} (r-)$. We denote the inverse mapping $\, \mathfrak{q} := \mathfrak{p}^{-1}$ and check that   $\, \Upsilon (\cdot) := \mathfrak{p} \big( X(\cdot) \big)\,$ is a diffusion  in natural scale, with state space $J$, dynamics
\begin{align} \label{eq:upsilon}
	\Upsilon(\cdot) \,= \,\mathfrak{p}(\xi) + \int_0^{\, \cdot} {\bm \sigma}   \big(\Upsilon(t)\big) \,\dx W(t) 
\end{align}
up until the (same) explosion time $S$, and dispersion function
$\,
{\bm \sigma} (y)  :=  \big( \mathfrak{p}^\prime \cdot \mathfrak{s} \big) ( \mathfrak{q} (y)  )\,, ~ y \in J\,$; 
see, for instance,  Section~5.5B in \citet{KS1}. It is clear from this reduction that the explosion time $S$ of $X(\cdot)$ can be represented in the form \eqref{115}, namely
\begin{equation*}
S \,  = \int_ 0^ {\bm \zeta}  { \dx \theta \over   {{\bm \sigma}}^2 ( \mathfrak{p}(\xi) + B  (\theta))}\, = \int_J { 2 L ({\bm \zeta}, y)  \over  {\bm \sigma}^2 (y)} \dx y \,,
\end{equation*}
in terms of a standard Brownian motion $B(\cdot)$, its local time random field $\,L(\cdot\,,\cdot)$, and the first exit time 
\begin{align} \label{eq:bm zeta}
{\bm \zeta}  \,:=\, \lim_{n \uparrow \infty} 	{\bm \zeta}_n \, , \qquad  {\bm \zeta}_n \,:=\, \inf \big\{\theta \geq 0: \mathfrak{p}(\xi) + B(\theta) \notin (\lambda_n, \varrho_n)\big\} 
\end{align}
 for any monotone sequences $\,\{ \lambda_n\}_{n \in \N}\,$, $\{ \varrho_n\}_{n \in \N}\,$ satisfying  $ \,\lambda < \lambda_n < \varrho_n < \varrho\,$ and $\,\lim_{n \uparrow \infty} \lambda_n =\lambda\,$, $\lim_{n \uparrow \infty} \varrho_n =\varrho\,$. For later reference, we also introduce the time change
$$
\Gamma^{\Upsilon} (\theta) := \int_0^{\theta}  \frac{\dx r}{ {\bm \sigma}^2 ( \xi + B (r))} \,, \qquad 0 \le \theta < S
$$
with $\,\Gamma^{\Upsilon}(\theta) := \infty\,$ for all $\,\theta \in [S,\infty)$,   construct the inverse $ A^\Upsilon (\cdot)$ of $\, \Gamma^\Upsilon(\cdot)\,$, and note the representation $ \Upsilon (\cdot) \equiv  \mathfrak{p}(\xi) + B(A^\Upsilon (\cdot))$ for some standard Brownian motion $B(\cdot)$.

\section{Relating explosions to the martingale property}
\label{S transformation}
 
 \subsection{A generalized Girsanov theorem}
\label{GenGir}

We present here a generalized version of the Girsanov-Van Schuppen-Wong theorem, which appeared in Section~3.7 of \citet{McKean_1969} under conditions considerably stronger than those imposed here; see also Exercise~5.5.38 in \citet{KS1} for the special case $ \mathfrak{s} (\cdot) \equiv 1$ and $\ell =-\infty$, $r = \infty$ and Theorem~9.1.3(ii) in \citet{Elworthy2010}.   This version can be considered a ``weak'' result, as it  provides a distributional identity; \citet{Ruf_Novikov} and \citet{Ruf_Larsson} use a related ``strong'' (i.e., pathwise)  version to provide a proof of the sufficiency of the  Novikov and Kazamaki criteria for the martingale property of stochastic exponentials; see also \citet{Perkowski_Ruf_2014}.  

First, we recall the finiteness of integral functionals under additional square-integrability assumptions on certain related functions.
\begin{rem}{\it Finiteness of integral functionals.}
Let us assume that the local mean/variance ratio function $\,\mathfrak{f}(\cdot)\,$ is locally square-integrable on $I$. Furthermore, for fixed $T>0$,   let us denote by $\Lambda^X (T, y)$ the local time accumulated during the time interval $[0,T]$ by the semimartingale $ X(\cdot)$ in \eqref{1} at the site $y \in I$; for the properties of this random field,  see for example Theorem~3.7.1  in \citet{KS1}. 
  From the occupation time density formula in that   theorem, we have on   $ \{ S_n >T\}$ the $\Prob-$a.e. property
\begin{align}
\int_0^T \mathfrak{b}^2 (X(t))  \dx t  \,&=  \int_0^T \left( { \,\mathfrak{b} \,\over   \mathfrak{s}} \right)^2 ( X(t)) \dx \langle X \rangle (t) =\int_0^T \left(  \, \mathfrak{f} \,  \mathbf{ 1}_{ [ \ell_n, r_n) }\right)^2 ( X(t)) \dx \langle X \rangle (t) \nonumber \\
&= 2 \int_{  \ell_n}^{ r_n}   \mathfrak{f}^{2} (y)   \Lambda^X (T, y)\dx y  \le  2  \sup_{\ell_n \leq y  \le r_n} \big( \Lambda^X (T, y)\big) \cdot \int_{  \ell_n}^{ r_n}   \mathfrak{f}^{2} (y)    \dx y  <  \infty\,,
\label{5} 
\end{align}
where the last inequality follows from the c\`adl\`ag property of the function $\Lambda^X(T,\cdot)$.      
\qed
\end{rem}

We are now ready to state and prove a first result. For its purposes, we shall need    the Borel $\sigma-$algebra ${\cal B}$ generated by the open sets in $C ([0, \infty))$,    the mappings $\,\varphi_t\,$ defined as $\,(\varphi_t \mathfrak{w}) (s) := \mathfrak{w} (s\wedge t)\,, ~~ 0 \le s < \infty\,$, and the corresponding $\, \sigma-$algebras $\,{\cal B}_t:= \varphi^{-1}_t  ( {\cal B}  )$,   for all $t \in [0, \infty)\,$. In this vein, see Problem~2.4.2 in \citet{KS1}.

\begin{thm}{\bf Generalized Girsanov theorem.}
 \label{Thm1}
Suppose that the local mean/variance ratio function $\,\mathfrak{f}(\cdot)\,$ is locally square-integrable on $I$.
For any given $T \in (0, \infty)$ and any Borel set $\,\Delta \in {\cal B}_T 
\,$, we then have 
\begin{align} \label{18}
	\Prob \big( X(\cdot) \in \Delta, \, S>T \big) &= \E^o \left[ \exp \left( \int_0^T \mathfrak{b} \left( X^o (t) \right) \dx W^o (t) - \frac{1}{\,2\,} \int_0^T \mathfrak{b}^2 \left( X^o (t) \right) \dx t  \right)  \, \mathbf{ 1}_{\{X^o(\cdot) \in \Delta,\, S^o >T\}} \right].
\end{align}
In particular, if both diffusions $X(\cdot)\,$, $X^o(\cdot)$ are non-explosive, i.e., if $\, \Prob (S= \infty) = \Prob^o (S^o= \infty)=1\,$, then the exponential $\,\Prob^o-$local martingale 
$$
\exp \left( \int_0^T \mathfrak{b} \left( X^o (t) \right) \dx W^o (t) - \frac{1}{\,2\,} \int_0^T \mathfrak{b}^2 \left( X^o (t) \right) \dx t  \right) \,, \qquad 0 \le T < \infty
$$
is a true $\,\Prob^o$--martingale.
\end{thm}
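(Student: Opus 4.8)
\emph{Proof idea.} The plan is to establish the identity \eqref{18} by a localized change of measure, and then to read off the martingale assertion as the special case $\Delta = C([0,\infty))$. Fix $T \in (0,\infty)$ and $n \in \N$. The estimate \eqref{5}, applied to $X^o(\cdot)$ (which is likewise a semimartingale with $\dx\langle X^o\rangle = \mathfrak{s}^2(X^o)\,\dx t$), shows that $\int_0^T \mathfrak{b}^2(X^o(t))\,\dx t < \infty$ a.e.\ on $\{S^o_n > T\}$; hence $N^o(\cdot) := \int_0^{\,\cdot \wedge S^o_n} \mathfrak{b}(X^o(t))\,\dx W^o(t)$ is a continuous $\Prob^o$--local martingale and its stochastic exponential $Z_n := \exp(N^o - \tfrac12 \langle N^o\rangle)$ is a nonnegative $\Prob^o$--supermartingale. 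Since $\mathfrak{b}(\cdot)$ need not be bounded near the interior thresholds $\ell_n, r_n$, $Z_n$ need not be a genuine martingale, so I interpose a second localization: with $\sigma_m := \inf\{t \ge 0 : \int_0^{\,t\wedge S^o_n} \mathfrak{b}^2(X^o(s))\,\dx s \ge m\}$, the quadratic variation of $N^o$ is bounded by $m$ on $[0,\sigma_m]$, whence $Z_n^{\sigma_m}$ is a true $\Prob^o$--martingale (e.g.\ by Novikov's criterion) and $\E^o[Z_n(T\wedge\sigma_m)] = 1$.

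Next I define the probability measure $\Q_{n,m}$ on $\F^o(T)$ by $\dx\Q_{n,m} = Z_n(T\wedge\sigma_m)\,\dx\Prob^o$. By Girsanov's theorem, $\widetilde W(\cdot) := W^o(\cdot) - \int_0^{\,\cdot\wedge\sigma_m\wedge S^o_n} \mathfrak{b}(X^o(s))\,\dx s$ is a $\Q_{n,m}$--Brownian motion, and $X^o(\cdot)$, stopped at $\sigma_m \wedge S^o_n$, is a weak solution of \eqref{1} in the sense of \eqref{113}, with the composite stop--rule $\sigma_m \wedge S^o_n$. The crucial observation is that $S_n$ and $\sigma_m$ are \emph{path functionals} --- measurable functions of the trajectory. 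Thus, writing $S_n, \sigma'_m$ for the same functionals evaluated along a weak solution $X(\cdot)$ of \eqref{1} on $(\Omega,\F,\Prob)$, uniqueness in the sense of the probability distribution for \eqref{1} up to $S_n$ forces the $\Q_{n,m}$--law of $(X^o)^{\,T\wedge\sigma_m\wedge S^o_n}$ to coincide with the $\Prob$--law of $X^{\,T\wedge\sigma'_m\wedge S_n}$. For $\Delta \in {\cal B}_T$, the indicator $\mathbf 1_{\{X(\cdot)\in\Delta,\, S_n > T,\, \sigma'_m > T\}}$ is a measurable function of that stopped path (on the event in question the stopped path coincides with $\varphi_T X$), so applying the equality of laws and unwinding the density gives
\[
\Prob\big(X(\cdot)\in\Delta,\, S_n > T,\, \sigma'_m > T\big) \;=\; \E^o\Big[ Z_n(T\wedge\sigma_m)\, \mathbf 1_{\{X^o(\cdot)\in\Delta,\, S^o_n > T,\, \sigma_m > T\}} \Big].
\]

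On $\{S^o_n > T,\, \sigma_m > T\}$ one has $Z_n(T\wedge\sigma_m) = \exp(\int_0^T \mathfrak{b}(X^o)\,\dx W^o - \tfrac12 \int_0^T \mathfrak{b}^2(X^o)\,\dx t)$, precisely the integrand in \eqref{18}. By \eqref{5} on both spaces, $\{\sigma'_m > T\}$ (resp.\ $\{\sigma_m > T\}$) increases to the whole of $\{S_n > T\}$ (resp.\ $\{S^o_n > T\}$) as $m \uparrow \infty$; monotone convergence then removes the $\sigma$--truncations, and a second monotone passage as $n \uparrow \infty$, using $\{S_n > T\} \uparrow \{S > T\}$ and $\{S^o_n > T\} \uparrow \{S^o > T\}$, yields \eqref{18}. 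Finally, assume $\Prob(S = \infty) = \Prob^o(S^o = \infty) = 1$; then \eqref{5} (with $S^o_n \uparrow \infty$) makes $Z(T) := \exp(\int_0^T \mathfrak{b}(X^o)\,\dx W^o - \tfrac12 \int_0^T \mathfrak{b}^2(X^o)\,\dx t)$, $0 \le T < \infty$, a well-defined nonnegative $\Prob^o$--local martingale, hence a supermartingale with $Z(0) = 1$. Taking $\Delta = C([0,\infty))$ in \eqref{18} gives $\E^o[Z(T)] = \Prob(S > T) = 1$ for every $T \in (0,\infty)$; since for $s < t$ the nonnegative quantity $Z(s) - \E^o[Z(t)\mid\F^o(s)]$ then has zero $\Prob^o$--expectation, it vanishes a.e., so $Z(\cdot)$ is a true $\Prob^o$--martingale.

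\emph{Main obstacle.} The delicate point is the construction in the second paragraph: because $\mathfrak{b}(\cdot)$ is only locally square--integrable, the stopped exponential $Z_n$ is not \emph{a priori} a genuine martingale, which forces the auxiliary localization $\sigma_m$, and one must keep careful track of the fact that $S_n$ and $\sigma_m$ are one and the same path functionals on both $(\Omega,\F,\Prob)$ and $(\Omega^o,\F^o,\Prob^o)$, so that uniqueness in the sense of the probability distribution for \eqref{1} can legitimately be invoked for the stopped processes. Everything else --- the two monotone passages to the limit and the supermartingale argument --- is routine once \eqref{5} is available.
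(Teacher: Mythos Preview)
Your argument is correct and close in spirit to the paper's, but the measure change runs in the \emph{opposite direction}. The paper starts on the $(\Omega,\F,\Prob)$ side, introduces the single composite stopping time $T_n := S_n \wedge \inf\{t:\int_0^t \mathfrak{b}^2(X(s))\,\dx s \ge n\}$, and uses the density $L(\cdot)=1/Z(\cdot)$ to \emph{remove} the drift; under the new measure $\Q$, the stopped process $X(\cdot\wedge T_n)$ solves the \emph{driftless} stopped equation \eqref{113}, and the paper then invokes the uniqueness-in-law for that stopped driftless equation which was established directly in Subsection~\ref{sec1.2} via the Dambis--Dubins--Schwarz time change. You instead start on the $(\Omega^o,\F^o,\Prob^o)$ side, \emph{add} the drift via $Z_n^{\sigma_m}$, and identify the stopped $X^o(\cdot)$ under $\Q_{n,m}$ with $X(\cdot)$ under $\Prob$ by appealing to uniqueness in law for the \emph{drifted} equation \eqref{1}.

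Two remarks on what each route buys. First, the paper's direction lets it lean on the uniqueness result it has actually proved (Subsection~\ref{sec1.2} treats only the driftless stopped equation); your direction needs uniqueness for the \emph{stopped} version of \eqref{1}, which is true and follows from the stated uniqueness for \eqref{1} by a routine extension/gluing argument, but is not spelled out in the paper --- you might add a sentence making this deduction explicit. Second, the paper's single composite stopping time $T_n$ collapses your two nested localizations ($S^o_n$ and $\sigma_m$) into one limit, which is a purely cosmetic economy; your separation is perfectly fine and arguably makes the role of Novikov's criterion more transparent. The monotone-convergence passages and the final supermartingale-with-unit-expectation argument for the martingale assertion are handled identically in both proofs.
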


\begin{proof}
We fix $T \in (0, \infty)$ and a Borel set $\, \Delta \in {\cal B}_T \,   $. 
In addition to the stopping times of \eqref{3}, we consider the stopping times
\begin{equation}
\label{6}
T_n :=S_n \wedge \inf \left\{ t \ge 0:  \int_0^t \mathfrak{b}^2 ( X  (s) )  \dx s \ge n \right\}, \qquad n \in \N,
\end{equation}
as well as stopping times $S^o_n$ and $T^o_{n}$  defined in the same manner as in \eqref{3} and \eqref{6}, but now with $X(\cdot)$ replaced by $X^o(\cdot)$.

We note that \eqref{5} implies $\{S>T\} = \bigcup_{n\in \N} \{T_{n}>T\}$, modulo $\Prob\,$;  similarly, we have $\{S^o>T\}  = \bigcup_{n \in \N} \{T_{n}^o>T\}$, modulo   $\Prob^o$.  In conjunction with the monotone convergence theorem, these observations imply that, in order to prove \eqref{18}, it is sufficient to show  
\begin{align} \label{E toshowT2.1}
	\Prob \big( X(\cdot) \in \Delta,  T_{n}>T \big) &= \E^o \left[ Z^o(T)  \mathbf{ 1}_{\{X^o(\cdot) \in \Delta, \,T_{n}^o>T\}} \right]
\end{align}
for all $n \in \N$, where we have set 
\begin{align*}
	Z^o(\cdot) := \exp \left( \int_0^{\cdot \wedge T_{n}^o} \mathfrak{b} ( X^o (t) )  \dx W^o (t) -  \frac{1}{2} \int_0^{\cdot \wedge T_{n}^o} \mathfrak{b}^2 ( X^o (t) )  \dx t  \right).
\end{align*}

In the following we shall prove  \eqref{E toshowT2.1} for fixed $n \in \N$.  Towards this end, we define the processes
$$
\widehat{W} (\cdot)  := \int_0^{\cdot \wedge T_{n}} \mathfrak{b} ( X(t))  \dx t \,+ \,W(\cdot)\,, \qquad 
	Z(\cdot) := \exp \left( \int_0^{\cdot \wedge T_{n}} \mathfrak{b} ( X (t) )  \dx\widehat{W} (t) -  \frac{1}{2} \int_0^{\cdot \wedge T_{n}} \mathfrak{b}^2 ( X (t) )  \dx t  \right)
$$
and the $\Prob$-local martingale
\begin{align*}
	L(\cdot) &:= \exp \left(- \int_0^{\cdot \wedge T_{n}} \mathfrak{b} ( X (t) )  \dx W (t) - {1 \over 2} \int_0^{\cdot \wedge T_{n}} \mathfrak{b}^2 ( X (t) )  \dx t  \right) = \frac{1}{Z(\cdot)}\,.
\end{align*}
 We note that $L(\cdot)$ is a strictly positive martingale; cf.~Corollary~3.5.13 in \citet{KS1}.  Therefore, $\dx \Q = L(T) \dx \Prob$ defines a new probability measure $\Q$ on $ (\Omega, \F(T))$.   Girsanov's theorem \citep[see Theorem~3.5.1 in][]{KS1} yields that $\widehat{W} (\cdot)$ is a $\Q$-Brownian motion.   Moreover, $\widehat{X}(\cdot) := X(\cdot \wedge T_{n})$ is a solution of the stochastic integral equation
  \begin{equation*}
\widehat{X}  ( \cdot)=  \xi + \int_0^{\,\cdot \wedge T_{n}}  \mathfrak{s} \big( \widehat{X} (t) \big)  \, \dx \widehat{W}  (t).
 \end{equation*}
Proposition~\ref{P:uniqueness appendix} implies that the $\Q$--distribution of $X(\cdot \wedge T \wedge T_n) $ is the same as the $\Prob^o$--distribution of $X^o(\cdot \wedge T \wedge T_n^o)$.  
Finally, we note that 
\begin{align*}
Z(\cdot) &= \exp \left( \int_0^{\,\cdot  \wedge T_{n}} \mathfrak{f} ( X (s) )  \dx X (s) -  \frac{1}{2} \int_0^{\,\cdot \wedge T_{n}} \mathfrak{b}^2 ( X  (s) )  \dx s  \right), \\
Z^o(\cdot) &= \exp \left( \int_0^{\,\cdot \wedge T_{n}^o} \mathfrak{f} ( X^o (s) )  \dx X^o (s) -  \frac{1}{2} \int_0^{\,\cdot \wedge T_{n}^o} \mathfrak{b}^2 ( X^o (s) )  \dx s  \right) 
\end{align*}
are also nonanticipative functionals of $X(\cdot \wedge T_{n})$ and $X^o(\cdot \wedge T^o_{n})$, respectively, so  we  have
\begin{align*}
	\Prob \big( X(\cdot) \in \Delta,  T_{n}>T \big) &= \E \left[L(T) \cdot Z(T) \mathbf{1}_{\{X(\cdot) \in \Delta,\, T_{n}>T\}} \right]  \\
		&= \E^{\Q} \left[Z(T) \mathbf{1}_{\{X(\cdot) \in \Delta, \,T_{n}>T\}} \right] = \E^o \left[ Z^o(T) \mathbf{ 1}_{\{X^o(\cdot) \in \Delta, \,T_{n}^o>T\}} \right] .
\end{align*}
This yields \eqref{E toshowT2.1} and concludes the proof.
\end{proof}

\subsection{Feynman-Kac representation}  
\label{SS feynman}

Fixing $T\in (0, \infty)\,$ and   taking $\Delta = C([0,T])$ in \eqref{18}, we obtain the distribution of the explosion time $S$ in \eqref{3} as  
\begin{equation}
\label{16}
\Prob (   S>T)= \E^o \left[ \exp \left( \int_0^T \mathfrak{b} \big( X^o (t) \big) \, \dx W^o (t) - {1 \over \,2\,} \int_0^T \mathfrak{b}^2 \big( X^o (t) \big) \,\dx t  \right)  \mathbf{1}_{\{S^o>T\}} \right],
\end{equation}
whenever the function $\,\mathfrak{ f} (\cdot)\,$ in \eqref{f} is locally square-integrable (thus also locally integrable) on   $I$. 

If we assume, in addition,  that $\mathfrak{ f} (\cdot)$ is also of finite first variation on compact subintervals of $I$ and left-continuous,  we have in the notation of \eqref{F_anti}   the generalized It\^o-Tanaka formula   
$$\,F( X^o (T)) - F(\xi)\,= \int_0^T \mathfrak{b}\big( X^o (t)\big)\,  \dx W^o (t) + \int_I \Lambda^{X^o} (T,a)\,\mathrm{d} \mathfrak{ f} (a)\,$$ 
on the event $\{ S^o > T\}$, where $\, \Lambda^{X^o} (T,a)\,$ denotes the semimartingale local time accumulated by $\, X^o (\cdot)\,$ at the site $\, a \in I$ during the time-interval $[0,T]$, and the expression \eqref{16} becomes 
\begin{equation}
\label{Ito-Tanaka}
\Prob(   S>T)=     \E^o \left[ \exp \left( F ( X^o (T) ) - F (\xi) -  \int_I \Lambda^{X^o} (T,a)\,\mathrm{d} \mathfrak{ f} (a) - {1 \over \,2\,} \int_0^T \mathfrak{b}^2 \big( X^o (t) \big) \, \dx t  \right)      \1_{ \{ S^o > T\} } \right].~~~~~~~~
\end{equation}

  Let us   assume next, that the function $\mathfrak{f}(\cdot)$ is actually    continuously differentiable on $I$; then  \eqref{Ito-Tanaka} takes the more ``classical" form
\begin{equation}
\label{17}
\Prob(   S>T)= \exp\big(- F (\xi)\big) \cdot   \E^o \left[ \exp \left( F ( X^o (T) )  -   \int_0^T V ( X^o (t) )  \dx t  \right)    \cdot \1_{ \{ S^o > T\} } \right] 
\end{equation}
with the notation
\begin{equation*}
V(x)\, := \, \frac{1}{\,2\,}  \, \mathfrak{s}^2 (x)   \left(  \mathfrak{f}^{2} (x) + \mathfrak{ f}^{\prime} (x)  \right) =  \frac{1}{\,2\,} \left( \mathfrak{b}^2 (x) + \mathfrak{ b}^{\prime} (x) \mathfrak{s} (x) -  \mathfrak{ b} (x) \mathfrak{s}^{\prime} (x)  \right), \qquad x \in I. 
\end{equation*}
In other words, the distribution of the explosion time is determined then completely by the joint distribution  of $X^o (T)$ and $ \int_0^T V \big( X^o (t) \big) \, \dx t\,$ on the event $ \{ S^o > T\} $, for all $T \in (0, \infty)$.

\begin{rem}{\it Non-explosive $X^o(\cdot)$.}  
 \label{Rem1}
 When $\Prob^o (S^o = \infty) =1$, 
the expression of  \eqref{17} takes the simpler form 
  \begin{equation*}
\Prob(   S>T )\,= \,\exp \big(- F (\xi)\big) \cdot   \E^o \left[ \exp \left( F \big( X^o (T) \big)  -   \int_0^T V \big( X^o (t) \big) \,\dx t  \right) \right]   .
\end{equation*} 
 In the   special case $  \mathfrak{s} (\cdot)  \equiv 1$   we have $ X^o(\cdot) = \xi + W^o(\cdot)$, so finding the distribution of the explosion time $S$ as in \eqref{17} amounts then to computing the joint distributions of appropriate Brownian functionals. \qed 
\end{rem}

\begin{rem}{\it Non-explosive $X (\cdot)$.} 
Of course, the reverse situation also prevails: when $\Prob  (S  = \infty) =1$  and the function $\mathfrak{f}(\cdot)$ is      continuously differentiable on $I$, the distribution of the explosion time of the diffusion $\,X^o(\cdot)\,$ in natural scale \eqref{13} is given as  
 \begin{equation*}
\Prob^o (   S^o>T )\,= \,\exp \big(  F (\xi)\big) \cdot   \E  \left[ \exp \left( -F \big( X  (T) \big)  +   \int_0^T V \big( X  (t) \big) \,\dx t  \right) \right]   .
\end{equation*} 
This can be argued in exactly the same manner. \qed
\end{rem}

\section{Analytic properties of the explosion time distribution}  \label{S:analytic}
In this section, we shall discuss analytic properties of the function $U: (0,\infty) \times I \rightarrow [0,1]$, defined via
\begin{equation}
\label{U}
U (T, \xi)  := \Prob_{\xi}( S>T ), \qquad (T, \xi) \in (0, \infty) \times I.
\end{equation}
Here and in what follows, we   index  the probability measure by the common starting position $\, \xi \in I$ of the diffusions $ X(\cdot)$ and $ X^o(\cdot)$. 
 
\subsection{Continuity}
\label{cont}

The question of continuity of the function $U(\cdot\,, \cdot)$ is of interest in itself; it also will be important for our arguments later on.
Since $1-U(\cdot, \xi)$ is a distribution function, it is   right-continuous for all $\xi \in I$.  In this subsection  we shall see, without any further assumptions on the coefficients $\mathfrak{s}(\cdot)$ and $\mathfrak{b}(\cdot)$ beyond those of Standing Assumption~\ref{A2},   that $U(\cdot\,, \cdot)$ is even jointly continuous in its two arguments. 

We   start with a technical result in Lemma~\ref{L min max}.  In particular, the property in \eqref{mystery} is well known for regular, one-dimensional diffusions; it is discussed, for instance, in the ``matching numbers" Section~3.3 of \citet{ItoMcKean}. It is not hard to prove from first principles, so we present here a simple argument.  We then show in Lemma~\ref{L t continuity} the continuity of the function $U(\cdot\,, \cdot)$ in the first component, as a function of time only. Finally, in Proposition~\ref{P continuity} we   establish the joint continuity of $U(\cdot, \cdot)$.

\begin{lem}{\bf Diffusions hit nearby points fast.} 
 \label{L min max}
	With the stopping times
	\begin{align} 
		H_x &:=  \inf \left\{t \in [0, \infty): X(t) = x\right\}, \qquad x \in I,  \label{E H}\\
		\widehat{H}_{x,y} &:= \inf \left\{t \in [H_{x}, \infty): X(t) =y\right\},\qquad (x,y)  \in I^2, \label{E Hhat}
	\end{align}
for any given $\varepsilon > 0$ there exist $x_1 \equiv x_1(\varepsilon) \in (-\infty, \xi)$ and  $x_2 \equiv x_2(\varepsilon) \in (\xi, \infty)$ such that
\begin{align}  \label{E widehat epsilon}
	\Prob_\xi \big(\widehat{H}_{x_i,\xi} < \varepsilon\big) \ge 1-\varepsilon, \qquad i =1,2.
\end{align}
In particular, for all $\delta>0$, we have
\begin{equation}
\label{mystery}
\lim_{y \rightarrow \xi}  \Prob_{y}\left(H_\xi < \delta \right) =1.
\end{equation}
	\end{lem}
\begin{proof}
	We first show that we have $\Prob_\xi(\mathcal{A}) = 0$ for the event
	\begin{align}  \label{E mathcal A}
		\mathcal{A} &:= \left\{ \omega \in \Omega: \exists \, R(\omega)>0 \text{ such that } \min_{t \in [0,R(\omega)]}  X(t,\omega)  \geq \xi \right\}.
	\end{align}
It is sufficient to show that \eqref{E mathcal A} holds with 
 the process $\,\Upsilon(\cdot)=\mathfrak{p} \big( X(\cdot) \big)\,$ of \eqref{eq:upsilon} instead of the diffusion $X(\cdot)$, and with $\, \xi\,$ replaced by $\,\mathfrak{p}(\xi)\,$, 
 due to the strict monotonicity of the scale function $\,\mathfrak{p}\,$ in  \eqref{scale}.  The path properties of standard Brownian motion, in conjunction with the representation  $ \Upsilon (\cdot) \equiv  \mathfrak{p}(\xi) + B(A^\Upsilon (\cdot))$ of Subsection~\ref{SS scale}  for some standard Brownian motion $B(\cdot)$,   and with the fact that $A^\Upsilon(t) > 0$ holds  for all $t> 0$,  let us conclude.

The continuity (from below) of the probability measure $\Prob_\xi$ then yields  the existence of $y_1 \in (-\infty, \xi)$ such that 
$\Prob_{\xi}(H_{y_1} < \varepsilon ) \ge 1-\varepsilon/2$.   Replacing the minimum by a maximum in \eqref{E mathcal A} and repeating the argument, we obtain the existence of $x_2 \in (\xi, \infty)$ such that
$\Prob_{\xi}(H_{x_2} < H_{y_1} < \varepsilon ) \ge 1-\varepsilon$ holds;    this then implies \eqref{E widehat epsilon} for $i=2$.  The existence of the claimed $x_1 \in (-\infty, \xi)$  is argued in the same manner. Finally, the strong {Markov} property of the diffusion $X(\cdot)$ implies  
\begin{align*}
	\Prob_{y}\big(H_\xi <  \varepsilon\big) \geq 
	\Prob_{\xi}\big(\widehat{H}_{y,\xi}  <  \varepsilon \big); 
\end{align*}
that is, the probability of the event that the diffusion $X(\cdot)$ started at $y$ hits $\xi$ before time $\varepsilon$  dominates the probability of the event that $X(\cdot)$ completes a round-trip from $\xi$ to $y$ and then back to $\xi$,  before time $\varepsilon$. We now fix  $\delta>0, {\varepsilon} \in (0, \delta)$ and the corresponding $x_1 \in (-\infty, \xi)$ and $x_2 \in (\xi, \infty)$. Then for all $y \in (x_1, x_2)$, applying \eqref{E widehat epsilon},  we have 
\begin{align*}
	\Prob_{y}\left(H_\xi <  \delta \right)  \geq  \Prob_{y}\left(H_\xi <  \varepsilon\right)  \geq  1-\varepsilon\,.
\end{align*}
The  proof of \eqref{mystery} follows.
\end{proof}

\begin{lem}{\bf Continuity of $U(\cdot, \cdot)$ as a function of time.}  \label{L t continuity}
	The function $T \mapsto U(T, \xi)$ is continuous 
	on $[0, \infty)$, ~for any given $\xi \in I$. 
\end{lem}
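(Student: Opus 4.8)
Since $1-U(\cdot,\xi)$ is a distribution function it is non-increasing and right-continuous; moreover $U(0,\xi)=\Prob_\xi(S>0)=1$ (because $\xi$ lies in the \emph{open} interval $I$ and $X(\cdot)$ has continuous paths), and $U(T,\xi)=1-\Prob_\xi(S\le T)\to 1$ as $T\downarrow 0$ since $\Prob_\xi(S\le T)\downarrow\Prob_\xi(S=0)=0$. Hence it is enough to show that $S$ has no atom in $(0,\infty)$, i.e.\ that $\Prob_\xi(S=t_0)=0$ for each fixed $t_0>0$. The plan is to argue by contradiction, supposing $p:=\Prob_\xi(S=t_0)>0$, and to combine the strong Markov property of $X(\cdot)$ with the ``fast round trip'' estimate \eqref{E widehat epsilon} from Lemma~\ref{L min max}.

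Fix $\e\in(0,t_0)$. By Lemma~\ref{L min max} there are points $x_1(\e)\in(\ell,\xi)$ and $x_2(\e)\in(\xi,r)$ satisfying \eqref{E widehat epsilon}; put $R^\e:=\widehat H_{x_1(\e),\xi}\wedge\widehat H_{x_2(\e),\xi}$. Then $R^\e$ is a stopping time with $R^\e>0$ almost surely (it takes positive time to reach either $x_i(\e)\neq\xi$), with $X(R^\e)=\xi$ on $\{R^\e<\infty\}$, and with $\Prob_\xi(R^\e\ge\e)\le\e$ by \eqref{E widehat epsilon}. Writing $g(s):=\Prob_\xi(S=s)$ and applying the strong Markov property at $R^\e$ on the event $\{R^\e<t_0\}$ (on which $X$ restarts from $\xi$ and the remaining explosion time equals $t_0-R^\e$), we obtain
\[
	p \;=\; \E_\xi\!\left[\1_{\{R^\e<t_0\}}\,g(t_0-R^\e)\right] \;+\; \Prob_\xi\!\left(S=t_0,\ R^\e\ge t_0\right).
\]
The last term is at most $\Prob_\xi(R^\e\ge t_0)\le\Prob_\xi(R^\e\ge\e)\le\e$. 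For the first term, let $\mathcal N:=\{s\in(0,t_0):g(s)>0\}$; this set is countable with $\sum_{s\in\mathcal N}g(s)\le 1$, and since $R^\e>0$ a.s.\ one has $t_0-R^\e\in(0,t_0)$ on $\{R^\e<t_0\}$, so $\E_\xi[\1_{\{R^\e<t_0\}}g(t_0-R^\e)]=\sum_{s\in\mathcal N}g(s)\,\Prob_\xi(R^\e=t_0-s)$. Splitting this sum over $\{s\in\mathcal N:s<t_0-\e\}$, where $\Prob_\xi(R^\e=t_0-s)\le\Prob_\xi(R^\e\ge\e)\le\e$, and over $\{s\in\mathcal N:t_0-\e\le s<t_0\}$, where $\Prob_\xi(R^\e=t_0-s)\le1$ and $\sum_{t_0-\e\le s<t_0}g(s)\le\Prob_\xi(t_0-\e\le S<t_0)$, we conclude that $p\le 2\e+\Prob_\xi(t_0-\e\le S<t_0)$. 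Letting $\e\downarrow 0$ and using that the events $\{t_0-\e\le S<t_0\}$ decrease to $\varnothing$ forces $p=0$, the desired contradiction.

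The one step that calls for care is the displayed identity: it rests on the strong Markov property of $X(\cdot)$ at the finite stopping time $R^\e$ (available since $X(\cdot)$ is strong Markov by uniqueness in distribution), together with the observation that $\{S=t_0\}\cap\{R^\e<t_0\}=\{S>R^\e,\ S-R^\e=t_0-R^\e\}\cap\{R^\e<t_0\}$ with $X(R^\e)=\xi$ there, while the residual event $\{R^\e\ge t_0\}$ is absorbed into the harmless second term. Everything else — the properties of $R^\e$, the countability of $\mathcal N$, and the monotone/continuity limits — is routine.
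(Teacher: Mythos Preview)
Your proof is correct. Both you and the paper reduce to showing that $S$ has no atom at a fixed time $t_0>0$, and both use the strong Markov property together with the ``fast round trip'' estimate of Lemma~\ref{L min max}; but the executions diverge from there.

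The paper constructs an \emph{infinite} increasing sequence of return times $H^{(0)}<H^{(1)}<\cdots$ to $\xi$ (by concatenating round trips with $\varepsilon=2^{-i}$) and observes that the events $\{S=T+H^{(i)}\}$ are disjoint, so that $1\ge\sum_i\Prob_\xi(S-H^{(i)}=T)\ge p\sum_i\Prob_\xi(X(H^{(i)})=\xi)$; since the infinite product $\prod_j(1-2^{-j})$ is positive, the right-hand series diverges unless $p=0$. Your argument instead uses a \emph{single} return time $R^\e$, writes $p$ as $\E_\xi[\1_{\{R^\e<t_0\}}\,g(t_0-R^\e)]$ plus a negligible remainder, and then exploits the countability of the atom set of $S$ together with the fact that $R^\e$ is concentrated in $(0,\e)$ to bound the expectation by $\e+\Prob_\xi(t_0-\e\le S<t_0)$. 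The paper's route is more combinatorial and avoids the splitting over $\mathcal N$; yours is more analytic and avoids the infinite-product bookkeeping. Both are short and self-contained.

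One small remark: you do not need the minimum of \emph{two} round trips. Either $\widehat H_{x_1(\e),\xi}$ alone already satisfies $R^\e>0$, $X(R^\e)=\xi$ on $\{R^\e<\infty\}$, and $\Prob_\xi(R^\e\ge\e)\le\e$, which is all your argument uses.
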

\begin{proof}
	We fix $(T,\xi) \in (0,\infty) \times I$ and observe that it is sufficient to show   $\,p := \Prob_\xi(S=T) = 0\,$, due to the   right-continuity of the function $U(\cdot\,, \xi)$.	Let us consider any strictly increasing sequence of stopping times $0 = H^{(0)} < H^{(1)} < H^{(2)} < \cdots$.  We then have, again by the strong Markov property of the diffusion $X(\cdot)$, the comparison 
\begin{align*}
	1 &\geq \Prob_\xi\left(S \in \left\{T + H^{(i)}: i \in \N_0\right\}\right) = \sum_{i \in \N_0}   \Prob_\xi\left(S - H^{(i)} = T\right)\\ &\geq \sum_{i \in \N_0}  \E_\xi\left[ \, \Prob_{X(H^{(i)})} \big(S = T\big) \, \1_{\{X(H^{(i)}) \in I\}} \, \right]
		\, \geq \, p \sum_{i \in \N_0}  \Prob_\xi   \left(X\big(H^{(i)}\big) = \xi\right).
\end{align*}
Thus, in order to show the statement, it is sufficient to construct a strictly increasing sequence of stopping times $\{H^{(i)}\}_{i \in \N}$ such that $\Prob_\xi\left(X(H^{(i)}) = \xi\right)$ does not converge to zero as $i$ increases. We shall construct such a sequence inductively, by ``stitching together'' the round trips of Lemma~\ref{L min max}. Towards this end, consider a sequence $\{q_i\}_{i \in \N} \subset I$   such that \eqref{E widehat epsilon} holds with $x_1$ replaced by $q_i$ and $\varepsilon$ replaced by $\,\varepsilon_i \in (0,1)$ such that the series $\, \sum_{i \in \mathbb{N}} \log(1 -\varepsilon_i)$ converges.  Next,   define the stopping times
\begin{align*}
	\widetilde{H}^{(i)}  := \inf \left\{t \in [H^{(i-1)}, \infty): X(t) =q_i\right\}\,, \qquad 
	{H}^{(i)}  := \inf \left\{t \in [\widetilde{H}^{(i)}, \infty): X(t) =\xi\right\} 
\end{align*}
and use the Markov  property of   $X(\cdot)$, along with conditioning on the event $\, \big\{X(H^{(i-1)}) = \xi\big\}\,$, to obtain 
\begin{align*}
	\Prob_\xi\left(X(H^{(i)}) = \xi\right) &\geq \Prob_\xi\left(X(H^{(i-1)}) = \xi\right) \cdot \Prob_\xi\left(\widehat{H}_{q_i,\xi} < \varepsilon_i\right) \geq \Prob_\xi\left(X(H^{(i-1)}) = \xi\right) \cdot  \big(1 - \varepsilon_i \big)\\
	&\geq \ldots \geq \prod_{j=1}^i  \Big(1 - \varepsilon_j \Big) \,=\, \exp\left(\sum_{j=1}^i \, \log\left(1 -\varepsilon_j\right) \right),
\end{align*}
which does not tend to zero as $\,i\,$ increases. This concludes the proof.
\end{proof}

\begin{prop}{\bf Joint continuity of $\,U(\cdot, \cdot)\,$.}  \label{P continuity}
	The function $(T, \xi) \mapsto U(T, \xi)$ is jointly continuous  on $[0,\infty) \times I$.
\end{prop}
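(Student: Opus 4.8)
Fix $(T,\xi)\in[0,\infty)\times I$; the plan is to show $U(T',y)\to U(T,\xi)$ as $(T',y)\to(T,\xi)$ by a coupling that compares the diffusion started at $y$ with the diffusion started at $\xi$ through the hitting time $H_\xi$ of \eqref{E H}, combined with the time‑continuity already established in Lemma~\ref{L t continuity} and the fast‑hitting estimate \eqref{mystery} of Lemma~\ref{L min max}. The basic mechanism is this: $H_\xi$ is a stopping time of the (augmented, right‑continuous) filtration, $\{H_\xi<\delta\}\in\mathcal F(H_\xi)$, and by the strong Markov property together with uniqueness in distribution, on $\{H_\xi<\infty\}$ the shifted process is again a solution of \eqref{1} started at $\xi$; writing $S'$ for its explosion time we get the decomposition $S=H_\xi+S'$ on $\{H_\xi<\infty\}$ (here we use that $\xi$ is an interior point of $I$, so $H_\xi<S$), with $\Prob_y(S'>t\mid\mathcal F(H_\xi))=U(t,\xi)$ on $\{H_\xi<\infty\}$.

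\textbf{Lower bound.} Since $S\ge S'$ on $\{H_\xi<\infty\}$, we have $\{H_\xi<\delta\}\cap\{S'>T'\}\subseteq\{S>T'\}$; conditioning on $\mathcal F(H_\xi)$ gives
$$U(T',y)\ \ge\ \Prob_y\big(H_\xi<\delta,\ S'>T'\big)\ =\ \Prob_y(H_\xi<\delta)\,U(T',\xi).$$
Letting $(T',y)\to(T,\xi)$ and using $\Prob_y(H_\xi<\delta)\to1$ from \eqref{mystery} and $U(T',\xi)\to U(T,\xi)$ from Lemma~\ref{L t continuity}, we obtain $\liminf_{(T',y)\to(T,\xi)}U(T',y)\ge U(T,\xi)$.

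\textbf{Upper bound.} On $\{H_\xi<\delta\}$ we have $S=H_\xi+S'<\delta+S'$, hence $\{S>T'\}\cap\{H_\xi<\delta\}\subseteq\{S'>T'-\delta\}$; on the complementary event we bound crudely by $\Prob_y(H_\xi\ge\delta)$. Conditioning on $\mathcal F(H_\xi)$ as before yields
$$U(T',y)\ \le\ \Prob_y(H_\xi<\delta)\,U\big((T'-\delta)^+,\xi\big)+\Prob_y(H_\xi\ge\delta)\ \le\ U\big((T'-\delta)^+,\xi\big)+\Prob_y(H_\xi\ge\delta).$$
Sending $(T',y)\to(T,\xi)$, again via Lemma~\ref{L t continuity} and \eqref{mystery}, gives $\limsup U(T',y)\le U\big((T-\delta)^+,\xi\big)$, and then letting $\delta\downarrow0$ and using once more the continuity of $t\mapsto U(t,\xi)$ from Lemma~\ref{L t continuity} yields $\limsup U(T',y)\le U(T,\xi)$. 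Combining the two bounds proves joint continuity at $(T,\xi)$; the boundary case $T=0$ needs only the lower bound, since $U(0,\xi)=1\ge U(T',y)$ for every $(T',y)$.

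\textbf{Main obstacle.} Essentially all the analytic content has been isolated into Lemmas~\ref{L min max} and~\ref{L t continuity}, so no new estimates are required and no regularity of $\mathfrak s(\cdot)$ or $\mathfrak b(\cdot)$ beyond the standing assumptions is used. The only place demanding care is the strong‑Markov bookkeeping: verifying that $\{H_\xi<\delta\}$ is $\mathcal F(H_\xi)$‑measurable, that the post‑$H_\xi$ explosion time $S'$ genuinely carries the law of $S$ under $\Prob_\xi$ (uniqueness in distribution, plus $H_\xi<S$ because $\xi\in I$), and that the identity $S=H_\xi+S'$ holds on $\{H_\xi<\infty\}$ while the exceptional set $\{H_\xi=\infty\}$ is absorbed into the $\Prob_y(H_\xi\ge\delta)$ term.
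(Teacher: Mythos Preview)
Your argument is correct and follows essentially the same route as the paper's: both proofs couple the diffusion started at $y$ to the one started at $\xi$ via the hitting time $H_\xi$, invoke the strong Markov property, and then appeal to Lemma~\ref{L min max} (equation \eqref{mystery}) and Lemma~\ref{L t continuity}. The only cosmetic differences are that the paper bundles your upper and lower bounds into a single absolute-value estimate $|U(t_n,\xi_n)-U(T,\xi)|\le\varepsilon+\Prob_{\xi_n}(H_\xi\ge\delta)$, and handles the boundary case $T=0$ by a direct monotonicity comparison rather than by specializing the lower bound.
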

\begin{proof}
	We fix a pair $(T,\xi) \in [0,\infty) \times I$ and a sequence $\{(t_n,\xi_n)\}_{n \in \N} \subset [0,\infty) \times I$ such that $\lim_{n \uparrow \infty} (t_n,\xi_n) = (T,\xi)$.  

We start with the case $T = 0$. We need to show that $\lim_{n \uparrow \infty} U (t_n,\xi_n)= \lim_{n \uparrow \infty} \Prob_{\xi_n}(S>t_n) = 1\,$.    With $S = S(\ell) \wedge S(r)$, the minimum of the explosion times of $X(\cdot)$ to $\ell$ and $r$, respectively, let us  choose some $ \eta \in (\ell, \xi)$ and observe that, for sufficiently large $n\in \N$, we have the upper bound
\begin{align*}
	\Prob_{\xi_n} \big(S(\ell)> t_n\big) \geq \Prob_{\eta}  \big(S(\ell) > t_n  \big)=  U (t_n, \eta).
\end{align*}
This last quantity converges to $U(0, \eta) =1$  as $n$ tends to infinity
, due to the right-continuity of the function $U(\cdot, \eta)$. We obtain thus  $\,\lim_{n \uparrow \infty} \Prob_{\xi_n}(S(\ell)> t_n)=1$; similarly  $\lim_{n \uparrow \infty} \Prob_{\xi_n}(S(r)> t_n)=1$, and this proves the claim for $\,T=0$.

We   assume now $\,T>0$ and fix some $\varepsilon>0$. From Lemma~\ref{L t continuity}, there exists $\delta \in (0, T/2)$ so that $|U(t,\xi) - U(T,\xi)| < \varepsilon$ holds for all $t \in (T-2 \delta, T+2 \delta)$. Without loss of generality, we assume  $|t_n - T| <\delta $  for all $n \in \mathbb{N}$. Next, we observe that the strong {Markov} property of $X(\cdot)$ implies   
$$
	 \Prob_{\xi_n}\big({\{S>t_n\}}  \cap {\{H_\xi < \delta\}} \big) \,= \,\E_{\xi_n}\left[ U(t_n - H_\xi,\xi) \,\1_{\{H_\xi < \delta\}} \right],
$$
  thus
\begin{align*}
	\big|U(t_n,\xi_n) - U(T,\xi)\big| &= \left|\,
	 \E_{\xi_n}\left[ \left( \1_{\{S>t_n\}} - U(T,\xi) \right) \1_{\{H_\xi < \delta\}} \right] + \E_{\xi_n}\left[ \left( \1_{\{S>t_n\}} - U(T,\xi) \right) \1_{\{H_\xi \ge  \delta\}} \right]
	\, \right|
	                 \\
			&\leq \E_{\xi_n}\left[ \big|U(t_n - H_\xi,\xi) - U(T,\xi)\big| \, \1_{\{H_\xi < \delta\}} \right] +   \Prob_{\xi_n}\big(H_\xi \geq  \delta  \big)
			\\
			&\leq \varepsilon +   \Prob_{\xi_n} \big(H_\xi \geq   \delta \big)
\end{align*}
for all $n \in \N$. Here  $H_{\xi}\,$, defined as in \eqref{E H}, is  the first hitting time of $\xi$ by the process $X(\cdot)$; and we have noted that the inequalities $T - 2 \delta < t_n - H_\xi < T +\delta $  hold  on the event $\,\{H_\xi < \delta\}\,$ for all $n \in \N$.  Letting $n$ tend to infinity and applying \eqref{mystery} concludes the proof.
\end{proof}

\subsection{Strict positivity}
\label{SP}

We shall show in this subsection that the distribution of the explosion time $\, S \,$ in \eqref{3} cannot possibly have compact support.
\newpage
\begin{prop}{\bf The  explosion time distribution is not  supported on a compact set.}  
 \label{L:Drift1}
For all $\,K \in \R\,$,  we have $\,\Prob_\xi(S>K)>0\,$. 
\end{prop}

\begin{proof} 
The scale considerations in Subsection~\ref{SS scale} make clear that it is enough to consider diffusions in natural scale; so we shall prove $\,\Prob^o_\xi(S^o>K)>0\,$ for all $\,K \in (0,\infty)\,$. In the light of the representation \eqref{115},   setting $\, h := 1 / \mathfrak{ s}^2\,$  and recalling the stopping time $\, {\bm \tau}\,$ from \eqref{TAU}, it suffices then to show 
 \begin{equation}
 \label{ES}
 	\Prob^o_\xi \left(\int_0^{\,\taub} h\big(\xi + B(\theta) \big) \dx \theta > K\right)>0 \,, \qquad \forall ~~K \in (0,\infty)\,.
\end{equation}

We shall argue by contradiction, so let us suppose that \eqref{ES} fails; to wit,  that 
\begin{equation}
\label{contra}
\int_0^{\,\taub} h\big(B(\theta)+\xi\big)\, \dx \theta \,\leq K 
\end{equation}
holds $\,\Prob^o_\xi-$a.e. for some real constant $K>0$. To help obtain a contradiction, we first consider a diffusion $Y(\cdot)$ with state space $I = (\ell, r)$, solution of the stochastic equation
\begin{align*}
	Y(t) \,=\, \xi + \int_0^t \left(\frac{1}{Y(\theta)-\ell} - \frac{1}{r-Y(\theta)} \right) \dx \theta + W(t) \,, \qquad 0 \le t < \infty\,.
\end{align*}
This equation has a solution which is unique in the sense of the probability distribution; we also observe that the lifetime of this diffusion is $\, \mathcal{S}^Y = \infty\,$, that is, the endpoints of the interval $I = (\ell, r)$ are never reached. Next, we note that $\, 
	\int_0^\infty h\big(Y(\theta)\big) \,\dx \theta = \infty
\,$ 
holds almost surely,  by  Theorem~2.10(ii)  in \citet{MU_integral}; to apply this result, use $f(\cdot) = h(\cdot) \wedge 1 \leq h(\cdot)$.   Thus, there exists some $T>0$ such that 
\begin{equation}
\label{contra2}
\Prob^o_\xi \left(\int_0^T h(Y(\theta)) \dx \theta >K \right) >0\,.
\end{equation}
Denoting $\, B^{\taub} (\cdot) \equiv B (\cdot   \wedge \taub)\,$, we   define now a nonnegative local martingale $M(\cdot) $ as follows:  
 
\begin{enumerate}
 	\item[(i)] $M (\cdot) \equiv 1$, if $\ell = -\infty$ and $r = \infty$;
	\item[(ii)]  $M (\cdot) \equiv \big( \xi + B^{\taub }(\cdot)-\ell \big) \big/ (\xi - \ell)=\mathcal{E} \left( \int_0^{\,\cdot \wedge \taub}  (  \xi+  B(\theta)-\ell )^{-1}  \dx B(\theta) \right)  $, if $\ell >-\infty$ and $r = \infty$;
	\item[(iii)]  $M(\cdot) = \big(r-\xi-B^{\taub}(\cdot)\big) \big/ (r-\xi)=\mathcal{E} \left( \int_0^{\,\cdot \wedge \taub}  ( \xi+   B(\theta)-r)^{-1}  \dx B(\theta) \right)  $,  if $\ell =-\infty$ and $r < \infty$;
	\item[(iv)]  and finally,
\begin{align*}
	M(t) &= \left( \frac{\,\xi+ B^{\taub}(t)-\ell\,}{\xi - \ell} \right)\left( \frac{\,r-\xi-B^{\taub}(t)\,}{r-\xi}\right)\cdot \exp\left(\int_0^{t \wedge \taub} \frac{\dx \theta}{(\xi+B(\theta)-\ell )(r-\xi-B(\theta))} \right)\\
	&= \mathcal{E} \left( \int_0^{t \wedge \taub} \left(   \frac{1}{\xi+ B(\theta)-\ell } - \frac{1}{r-\xi-B(\theta)}\right) \dx B(\theta) \right) , \qquad 0 \leq t < \infty\,,
\end{align*}
 if $\,\ell >-\infty\,$ and $\,r < \infty\,$, 
 \end{enumerate}
where $\mathcal{E}(\cdot)$ denotes   stochastic exponentiation. 
In each of these four cases, the local martingale $M(\cdot) $ is a true martingale with expectation equal to $\, M(0)=1\,$.  This is obvious in the first three cases, and follows from the considerations of \citet{MU_martingale} or of \citet{Ruf_martingale} 
in the last case.
 
 \smallskip
 Thus, we may define a probability measure $\Q^{(T)}$ on $\mathcal{F}(T)$ via the recipe   $\dx \Q^{(T)} / \dx \Prob^o_\xi = M(T)$. We observe that the process $\xi + B(\cdot)  $ solves the same stochastic differential equation under this new  measure $\Q^{(T)}$, as the process $Y(\cdot)$ does under the measure $\Prob^o_\xi\,$, again in each of the four cases. Recalling that this stochastic differential equation has a solution which is unique in the sense of the probability distribution, as well as   \eqref{contra} and \eqref{contra2},  we obtain the contradiction 
 \begin{align*}
 	0 &< \Prob^o_\xi \left(\int_0^T h(Y(\theta)) \dx \theta >K \right) =\Q^{(T)} \left(\int_0^{ T }h(B(\theta)+\xi) \dx \theta >K \right)  	\\ &= \mathbb{E}^{\Prob^o_\xi} \left[M(T)\, \1_{\left\{\int_0^{ T }h(B(\theta)+\xi) \dx \theta >K\right\}}\right] \le  \mathbb{E}^{\Prob^o_\xi} \left[M(T)\, \1_{\left\{\int_0^{\taub   }h(B(\theta)+\xi) \dx \theta >K\right\}}\right]\,=\,0	\,.
	 \end{align*}
Here the last inequality follows from the fact that $M(T)  = 0$ holds on the event $\{\taub \leq T\}$.    
The statement is now proved.
\end{proof}

\begin{rem}{\it Alternative argument.}
	The referee suggests an alternative and shorter proof of Proposition~\ref{L:Drift1}: Let us recall the notation of  \eqref{E Hhat},  and 	observe that there exist some $\eta \in I$ and $\delta > 0$ such that $\Prob_\xi(\delta \leq \widehat H_{\eta, \xi} < S) > 0$.  By the strong Markov property this then yields that $\Prob_\xi(n \delta \leq \widehat H_{\eta, \xi} < S) > 0$ for each $n \in \N$, and thus  the statement.
	\qed
\end{rem}

\subsection{Full support}
\label{SS: FSP}

We shall show now that, when explosions can occur in finite time with positive probability,     Assumption~\ref{A1} below guarantees that the distribution of the explosion time has full support on the positive real half-line. Let us start by considering the closed set
\begin{align}
\label{eq:A}
	A \,:=\, \left\{x \in I: \int_{-\varepsilon}^{\varepsilon}\, \,\frac{\dx z}{\mathfrak{s}^4(x+z)} = \infty, ~~~\forall ~~\varepsilon \in \big(0, \min\{x-\ell, r-x\}\big)\right\}.
\end{align}

\begin{assume}  
\label{A1}
	{\it The set $A$ in \eqref{eq:A} is countable.}
\qed
\end{assume}

Every    $\,\mathfrak{ s}: I \to \R \setminus \{ 0 \}\,$ which is   continuous or, more generally, locally bounded away from the origin,  satisfies this assumption; for then the set $\,A\,$ is empty.  In the example that follows, we construct a non-trivial discontinuous function, not bounded away from zero locally,  that satisfies Assumption~\ref{A1}.
Thus, in most cases of interest,  Assumption~\ref{A1} is satisfied; however, as   Example~5.23 in \citet{Wise_1993}
illustrates, it is even possible to have $A = I$ despite the integrability condition in  \eqref{2}.

\begin{exmp}
We observe that the  intervals $$I_{n,m} = \left(\frac{1}{n+1} + \frac{1}{n(m+1)(n+1)}\,,\, \frac{1}{n+1} + \frac{1}{nm (n+1)}\right]\,, \qquad (n,   m) \in \N^2$$ are disjoint since $\bigcup_{m \in \N} I_{n,m} = (1/(n+1), 1/n]$ for each $n \in \N$ and satisfy
$\bigcup_{(n,   m) \in \N^2} I_{n,m} = (0,1]$.
Consider the function $f: (-1,1) \rightarrow (0, \infty)$ $($we interpret  $f(\cdot)  \equiv 1/ \mathfrak{s}^{4}(\cdot) )\,$,  defined by
$f(x) = 1$ for all $x \in (-1,0]$ and by
$$
f(x) =  \left( x-\frac{1}{n+1} - \frac{1}{n(m+1)(n+1)} \right)^{-1}\quad \text{for all}~~~x \in I_{n,m} \setminus\{1\}\,.
$$
  We now consider the state space $I = (-1,1)$, and note that 
\begin{align*}
	A = \left\{\frac{1}{n+1} + \frac{1}{n(m+1)(n+1)}\,:\,(n,   m) \in \N^2  \right\} \bigcup \left\{\frac{1}{n+1} : n \in \N\right\}  \bigcup \{0\};
\end{align*}
thus, Assumption~\ref{A1} is satisfied.
\qed
\end{exmp}

\begin{thm}{\bf Distribution function of time to explosion is strictly decreasing.}
 \label{PP}
For any fixed starting position $\, \xi \in I\,$, the function 
$$
[ 0 , \infty) \ni T \longmapsto \mathbb{P}_\xi \big( S  > T \big) \in (0, 1]
$$
is   strictly decreasing, provided that $\, \mathbb{P}_\xi \big( S  = \infty \big) <1\,$ and Assumption \ref{A1}   hold. 
\end{thm}

The argument will require a few preliminaries.  The next lemma contains the key idea in the proof of the main  result. It asserts that, under the local integrability of the reciprocal of the second power of its local variance function, the diffusion in natural scale $\, X^o (\cdot)\,$ ``can reach   far away points fast, with positive probability." 

\begin{lem}{\bf $X^o$ reaches far-away points fast,  with positive probability.}
\label{L:Integrability}
Assume  there exist $z \in (\ell,\xi)$ and $y \in (\xi, r)$ such that $\,\,
	\int_z^{y}  \mathfrak{s}^{-4}(a) 
	\, \dx a < \infty\,$. 
 Then for every $\,\varepsilon>0\,$ we have
\begin{align}  \label{eq:L1.2a}
	\Prob_{\xi}^o\left(H_{z} < \varepsilon\right) > 0\,.
\end{align}
\end{lem}

\begin{proof}
We start by fixing the constant  
$$K = \frac{(\xi-z)({y} -z)}{\varepsilon({y} -\xi)} + 1>0\,,$$ and considering the $\Prob^o_\xi$--local martingale 
	$$
	L(t)\,: =\, - K \int_0^{\,t \wedge H_z \wedge H_{y}} \frac{\dx W^o(u)}{\,\mathfrak{s}(X^o(u))\,} \, =\, -K \int_0^{\,t \wedge H_z \wedge H_{y} } \frac{\dx X^o(u)}{\,\mathfrak{s}^2(X^o(u))\,} \,, \qquad 0 \le t < \infty\,. 
	$$
	The stochastic integral here is well-defined, because
	$$
\int_0^{t \wedge H_z \wedge H_{y}}  \frac{\dx u}{\mathfrak{s}^2(X^o(u))} \, =\, \int_0^{t \wedge H_z \wedge H_{y}}  \frac{\dx \langle X^o\rangle(u)}{\mathfrak{s}^4(X^o(u))} \, =\,   \int_z^{y}   2\,\Lambda^{X^o}\big(t \wedge H_z \wedge H_{y},a\big)\, \frac{\dx a}{\mathfrak{s}^4(a)}\, < \,\infty
$$
holds for all $\, t \in [0, \infty)\,$. 	Here, the second equality follows from the occupation-time-density property of semimartingale local time, and the strict inequality   from the c\`adl\`ag property of the mapping $\,a \mapsto \Lambda^{X^o}(t \wedge H_z \wedge H_{y},a)\,$.
	
\smallskip
	Next, we consider the stochastic exponential $M = \mathcal{E}(L)$.  By \citet{MU_martingale}, the local martingale $M(\cdot)$ is a true martingale (use Theorem~2.1 in that paper,  with function $b(\cdot) = -K \1_{ [z,y]} (\cdot) /\mathfrak{s}(\cdot)$ along with (24) and  (26) there). Consequently, by the Girsanov theorem, 	there exist a probability measure $\,\Q_\xi^o\,$, absolutely continuous with respect to $\,\Prob_\xi^o\,$ on $\, {\cal F} (\varepsilon)\,$,
	and a $\,\Q_\xi^o$--Brownian motion $\widetilde{W}(\cdot)$, so that  $\,X^o(\cdot)\,$ satisfies up until the explosion time $S^o$ the 
	equation
\begin{align}  
 \label{eq:XoNewMeasure}
	X^o(\cdot) \,= \,\xi - K (\cdot\, \wedge H_z \wedge H_{y})  + \int_0^{\,\cdot} \mathfrak{s}\big(X^o(t)\big) \,\dx \widetilde{W}(t)\,.
\end{align}
	
	Now  let us assume that \eqref{eq:L1.2a} fails, 	that is, $\,	\Q_\xi^o\left(H_{z} \ge \varepsilon\right) = 1 \,$ holds for some $\, \varepsilon >0\,$; as a consequence,  the process $X^o (\cdot)$ is then bounded from below by $z$ on the time-interval $[0, \varepsilon]$. Thus,  we have   
$$
\int_0^{ t} \mathfrak{s}(X^o(u)) \dx \widetilde{W}(u)\,=\, X^o(  t) + K(  t \wedge H_{{y}}) - \xi\,, \qquad 0 \le t \le \varepsilon 
$$ 
$\Q_\xi^o-$a.e., and deduce that the process on the left-hand side is a $\Q_\xi^o$--local martingale, bounded from below by $z-\xi$ on the interval $[0, \varepsilon]$.  This again implies that  $X^o(\cdot  \wedge \varepsilon \wedge H_{{y}})$ is a $\Q_\xi^o$--supermartingale, and    
$$
\Q_\xi^o \left(H_{{y}} \geq \varepsilon\right) \,\geq \,\Q_\xi^o \left(H_{{y}} \geq H_{{z}}\right) \,\geq \,\frac{{y}-\xi}{{y}-z}\,.
$$
Taking expectations in \eqref{eq:XoNewMeasure}, we obtain
\begin{align*}
	z \,\leq \,\E^{\Q_\xi^o} \left[X^o\left(\varepsilon\right)\right] \,\leq \, \xi - K \varepsilon \cdot\Q_\xi^o\left(H_{{y}} \geq \varepsilon\right)\,
\leq \,\xi - K \varepsilon \cdot \frac{ \,  {y}-\xi \,}{ y-z}\, < z
\end{align*}		
by the definition of $K$.  This apparent contradiction yields \eqref{eq:L1.2a} and concludes the proof. 
\end{proof}

We continue  by showing that  under Assumption~\ref{A1}, when  the diffusion $X^o(\cdot)$ can explode  in finite time with positive probability, it can explode arbitrarily fast  with positive probability. Although the proof of the  lemma that follows is somewhat tedious, the underlying idea is quite simple.  First,   it   suffices  to show that the diffusion, started in $\xi$, can hit  a certain point $y \in I$ arbitrarily fast; let us assume, for the moment, that $y < \xi$. Then,    we choose some $K \in \N$ and decompose the interval $[y,\xi]$ in $2K-1$ intervals, each of which the diffusion $X^o(\cdot)$ can cross  fast enough with positive probability, so that the total time until the diffusion $X^o(\cdot)$ hits $y$ can  still be made arbitrarily small with positive probability.

  Among these $2K-1$ intervals, there are $K$ ones that contain all ``critical'' points in the set $A \bigcap [y,\xi]$ as in \eqref{eq:A}.  Their lengths are chosen so  that   the diffusion $X^o(\cdot)$ can cross each of  these  $K$ intervals fast enough; in the proof that follows, these intervals have the form $(x_{i_k}-\delta_{i_k}^\prime, x_{i_k}+\delta_{i_k})$. The remaining $K-1$ intervals, of the form $(x_{i_{k+1}}+\delta_{i_{k+1}}, x_{i_k}-\delta_{i_k}^\prime)$,   do not contain any points of the set $A$, and  may be quite large; nevertheless, an application of Lemma~\ref{L:Integrability} guarantees  that the diffusion can cross them fast enough with   positive probability. This   concludes the argument for diffusions in natural scale.

\begin{prop}{\bf If $X^o$ can  explode, it explodes fast with positive probability.} 
\label{L:noDrift2}
	Under Assumption~\ref{A1}, and provided   $\,\,\Prob^o_\xi(S^o<\infty) > 0\,$ holds, the explosion time of \eqref{115} satisfies  $$\Prob^o_\xi \big(S^o < \varepsilon \big)>0~~~~\hbox{ for all}~~\varepsilon > 0\,. $$  
\end{prop}
\begin{proof}
We fix $\varepsilon>0$ and note that there exists a point $y \in (\ell, r)$ such that $\Prob_{y}^o(S^o<\varepsilon/2)>0$ and   $\Prob_{\xi}^o(H_y < \infty)>0$ hold, where $H_y$ is defined in  \eqref{E H}; for otherwise, we would have  $\Prob^o_\xi(S^o < \infty) = 0$.  Thus, we need only    show     $\Prob_\xi^o(H_{y} <\varepsilon/2)>0$.  Without loss of generality, we shall assume   $y \in (\ell,\xi)$.

\smallskip
 We     enumerate now as $\{x_i\}_{i \in \N}$  the points of the closed set  $A \bigcup \{y,\xi\}$, where $A$ is given right before Assumption~\ref{A1}   (if      $A \bigcup \{y,\xi\}$ has only a finite number of   points, say $m \in \N$, we just set $x_{m+1} = x_{m+2} = \ldots = \xi$).   Fix a sequence of strictly positive numbers $\{\varepsilon_i\}_{i \in \N}$ such that $\sum_{i \in \N}  \varepsilon_i < \varepsilon/4$. As in the proof of Lemma  \ref{L min max},  there exist strictly positive numbers $\,\delta_i\,, \delta_i^\prime \, $ such that 
\begin{align*}
	\Prob^o_{x_i+ \delta_i}\big(H_{x_i-\delta_i^\prime} <   {\varepsilon_i}\big) > 0
\end{align*}
holds for each $i \in \N$.  An application of the Heine-Borel theorem then yields   the  existence of an integer $K \in \N$ and of $K$ points, say   $\xi = x_{i_1} > \cdots > x_{i_K} = y$, such that 
\begin{equation*}
\left(A \bigcup \{y,\xi\}\right) \bigcap \,[y,\xi] ~ \subset ~\bigcup_{k = 1}^K  \left(x_{i_k}-\frac{\,\delta_{i_k}^\prime\,}{2}\,,\, x_{i_k}+\delta_{i_k}\right) 
\end{equation*}
holds for the corresponding positive numbers $\delta_{i_k}$, $\delta_{i_k}^\prime$.  
We may assume    $\,x_{i_k} - \delta_{i_k}^\prime > x_{i_{k+1}} + \delta_{i_{k+1}}\,$   for all $k = 1, \ldots, K-1$; since if one of these last inequalities did not hold, we could just merge two overlapping intervals of the form $(x_{i_k}-\delta_{i_k}^\prime, x_{i_k}+\delta_{i_k})$ and $(x_{i_\ell}-\delta_{i_\ell}^\prime, x_{i_\ell}+\delta_{i_\ell})$ to one of the form  $(x_{i_k}-\widetilde{\delta}_{i_k}^\prime, x_{i_k}+\widetilde{\delta}_{i_k})$ (and replace the two corresponding $\varepsilon_i$'s by their sum) and repeat this procedure until all strict inequalities were made to hold.

If we   show now that the diffusion $X^o(\cdot)$ can move  fast through those intervals of the form $(x_{i_{k+1}}+\delta_{i_{k+1}}, x_{i_k}-\delta_{i_k}^\prime)$ with positive probability, that is, if the inequalities 
\begin{align}  
\label{eq:L2.2a}
	\Prob^o_{x_{i_k}-\delta_{i_k}^\prime}\left(H_{x_{i_{k+1}}+\delta_{i_{k+1}}} < \frac{\varepsilon}{4 K}\right) > 0, \qquad \forall ~~~k = 1, \ldots, K-1 
\end{align}
hold, then we obtain  from the Markov property
\begin{align*}
	\Prob^o_\xi \big(S^o < \varepsilon\big) &\geq \Prob^o_\xi\left(H_y < \frac{\varepsilon}{2}\right) \, \Prob^o_y\left(S^o < \frac{\varepsilon}{2}\right)
\geq  \Prob^o_{x_{i_1} + \delta_{i_1}}\left(H_{x_{i_K} - \delta_{i_K}^\prime} < \frac{\varepsilon}{2}\right) \,\Prob^o_y\left(S^o < \frac{\varepsilon}{2}\right)  > 0.
\end{align*}
Here, the second inequality holds since $\xi < x_{i_1} + \delta_{x_1}$ and $y>x_{i_K} - \delta_{i_K}^\prime$ by construction of the sequence $\,(x_i\,,\,i \in \N)\,$, and the last inequality holds since
\begin{align*}
	\Prob^o_{x_{i_1} + \delta_{i_1}}\left(H_{x_{i_K} - \delta_{i_K}^\prime} < \frac{\varepsilon}{2}\right)  &\geq  \prod_{k=1}^{K-1}  \left( \Prob^o_{x_{i_k} + \delta_{i_k}}\left(H_{x_{i_k} - \delta_{i_k}^\prime} < {\varepsilon}_{i_k}\right)   \Prob^o_{x_{i_k} - \delta_{i_k}^\prime}   \left(H_{x_{i_{k+1}} + \delta_{i_{k+1}}} < \frac{\varepsilon}{4K}\right)  \right) \\
	&~~~~~~~~~~~~~~~~~\cdot  \Prob^o_{x_{i_K} + \delta_{i_K}}\left(H_{x_{i_K} - \delta_{i_K}^\prime} < {\varepsilon}_{i_K}\right)   
	>0\,.
\end{align*}
This yields the statement of the proposition.  

In order to show \eqref{eq:L2.2a}, we fix $\,k \in 1, \ldots, K-1$ and note   $\,A \,\bigcap \, \left[ \, x_{i_{k+1}} + \delta_{i_{k+1}}  \,,\, x_{i_k} - \frac{\delta_{i_k}^\prime}{2} \,\right] = \emptyset\,$,  which implies  
\begin{align*}
	\int_{x_{i_{k+1}} + \delta_{i_{k+1}}}^{x_{i_k} - \frac{\delta_{i_k}^\prime}{2}} \frac{\dx a}{\mathfrak{s}^4(a)} < \infty 
\end{align*}
from the definition of the set $A$ and another application of the Heine-Borel theorem (yielding that any open cover of the compact interval $[x_{i_{k+1}} + \delta_{i_{k+1} } , x_{i_k} - \delta_{i_k}^\prime/2]$ has a finite subcover).
Thus, the assertion in \eqref{eq:L2.2a} follows now from Lemma~\ref{L:Integrability}, and this concludes the proof.
\end{proof}

Propositions \ref{L:Drift1} and   \ref{L:noDrift2} together yield the following statement:
\begin{prop}{\bf Distribution function of time to explosion of $X^o$ is strictly decreasing.}  \label{P:noDrift}
Under Assumption~\ref{A1}, the function $\,  [0,\infty) \ni t  \,\longmapsto \,\Prob^o_\xi(S^o>t) \in [0,1]\,$ is strictly decreasing, provided   $\,\Prob^o_\xi(S^o<\infty) > 0\,$ holds.
\end{prop}

\begin{proof}  We know from Proposition~\ref{L:noDrift2} that
\begin{equation}
\label{fast}
\mathbb{P}^o_y \big( S^o \le \varepsilon\big) \, > \, 0\,, \qquad \forall \varepsilon >0
\end{equation}
holds for any $y \in I$. For all $\, t \ge 0\,$ and $\,\varepsilon >0\,$, the Markov property and \eqref{fast}   give then 
$$
\mathbb{P}^o_\xi \big( S^o >t + \varepsilon\big)\,=\, \mathbb{E}^o_\xi \left[ \, \mathbf{ 1}_{ \{ S^o >t\} } \cdot \mathbb{P}^o_{X(t)} \big( S^o >  \varepsilon\big)\right]\,<\, \mathbb{P}^o_\xi \big( S^o >t    \big) 
$$
in conjunction with Proposition~\ref{L:Drift1}, and this establishes the strict decrease.
\end{proof}

\begin{proof}[Proof of Theorem~\ref{PP}]  We recall the reduction to natural scale in Subsection~\ref{SS scale}, as well as  the notation there. We also note that    
$$\int_{-\varepsilon}^{\varepsilon} \frac{\dx z}{ \, {\bm \sigma} ^4 (y+z)\,} \,=\, \infty\,, \qquad \forall \,\, \varepsilon \in (0, \min\{y-\lambda\,, \varrho-y\})
$$
holds for some fixed $y \in J$, if and only if 
$$
\int_{\mathfrak{q}(y-\varepsilon)-\mathfrak{q}(y)}^{\mathfrak{q}(y+\varepsilon)-\mathfrak{q}(y)} \,\frac{\dx \zeta}{  \mathfrak{s}^4(\mathfrak{q}(y)+\zeta)} = \int_{-\varepsilon}^{\varepsilon} \frac{\dx z}{ \mathfrak{p}^\prime\left(\mathfrak{q}(y+z)\right) \mathfrak{s}^4(\mathfrak{q}(y+z))} = \infty\,, \qquad \forall \,\, \varepsilon  \in \big(0, \min\{y-\lambda\,, \varrho-y\}\big)
$$
\newpage
\noindent
holds, since the derivative $\mathfrak{p}^\prime(\cdot)  = \exp(- 2 F(\cdot))$ of the scale function is continuous on $I$.  However, this last condition is equivalent to 
$$
\int_{-\delta}^{\delta}\, \frac{\dx z}{  \mathfrak{s}^4(\mathfrak{q} (y)+z)} = \infty\,, \qquad \forall \,\,\delta \in \big(0, \min\{\mathfrak{q} (y)-\ell, r-\mathfrak{q} (y)\}\big),
$$
so the conclusion follows now on the strength of Propositions~\ref{L:noDrift2} and \ref{P:noDrift}. 
\end{proof}
 
\begin{open}
 We have not been able to establish Theorem~\ref{PP} without the condition of Assumption~\ref{A1} --- or to find an example showing that it fails in the absence of this condition. We leave the resolution of this issue to future research. \qed
\end{open}

\section{Connections with  differential equations}   \label{S:DE}

We shall  now study  conditions, under which the function $U(\cdot, \cdot)$ and its Laplace transform can be characterized as the minimal nonnegative solutions of  appropriate linear partial and ordinary, respectively, differential equations. 

\subsection{Connections with   parabolic partial 
differential equations} \label{SS:pde}

In this subsection, we study conditions   implying that the function $U(\cdot, \cdot)$ 
 solves the Cauchy problem for the linear, parabolic partial differential equation
 \begin{equation}
\label{pdeU}
{ \partial \mathcal{U}  \over \partial \tau} (\tau, x) ={  \mathfrak{s}^2 (x)\over 2} {  \partial^2 \mathcal{U}  \over \partial x^2} (\tau, x)  + \mathfrak{ b}(x)  \mathfrak{s} ( x) {  \partial \mathcal{U}  \over \partial x} (\tau, x)
\end{equation}
with an appropriate initial condition, namely
 \begin{equation}
\label{icU}
\mathcal{U} (0 , x) =1,\qquad  x  \in   I.
\end{equation}

 We start with an existence result.
\begin{lem}{\bf Existence of a classical solution.}  \label{L classical}
	Assume that the functions $\mathfrak{s}(\cdot)$ and  $\mathfrak{b}(\cdot)$ are locally uniformly H\"older-continuous on $I$.
Then for any bounded, continuous function $g: (0, \infty) \times I \rightarrow \infty$ and any $n \in \N$, 
 the parabolic partial differential equation of \eqref{pdeU} has a unique classical solution $\,\mathcal{U}(\cdot\,, \cdot)$ of class ${\cal C}^{1,2}((1/n, \infty) \times (\ell_n, r_n))$, subject to the initial and lateral conditions
$$\mathcal{U}(1/n,x) = g(1/n,x) \qquad \text{and}  \qquad \mathcal{U}(\tau,\ell_n) = g(\tau,\ell_n),\quad   \mathcal{U}(\tau,r_n) = g(\tau,r_n)$$
for all $(\tau,x) \in (1/n, \infty) \times (\ell_n, r_n)$.
Moroever, the solution $\,\mathcal{U}(\cdot, \cdot)$ is bounded on $[1/n, \infty) \times [\ell_n, r_n]$.
\end{lem}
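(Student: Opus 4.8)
This is the classical first initial-boundary value (Dirichlet) problem for a uniformly parabolic operator on a bounded spatial domain, and the plan is to reduce it to the standard linear parabolic theory. Fix $n \in \N$ and work on the semi-infinite cylinder $Q_n := (1/n, \infty) \times (\ell_n, r_n)$. Since $\mathfrak{s}(\cdot)$ is continuous and nowhere zero, $\mathfrak{s}^2(\cdot)$ is bounded away from $0$ and from $\infty$ on the compact interval $[\ell_n, r_n]$, so there exist constants $0 < m_n \le M_n < \infty$ with $m_n \le \mathfrak{s}^2(x) \le M_n$ there; consequently $\tfrac{1}{2}\mathfrak{s}^2 \partial_{xx} + \mathfrak{b}\,\mathfrak{s}\,\partial_x$ is uniformly elliptic on $(\ell_n,r_n)$ and $\partial_\tau - (\tfrac12 \mathfrak{s}^2 \partial_{xx} + \mathfrak{b}\,\mathfrak{s}\,\partial_x)$ is uniformly parabolic on $Q_n$. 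The coefficients $\tfrac12 \mathfrak{s}^2(\cdot)$ and $(\mathfrak{b}\,\mathfrak{s})(\cdot)$ are products of the locally uniformly H\"older-continuous, locally bounded functions $\mathfrak{s}(\cdot)$ and $\mathfrak{b}(\cdot)$, hence are themselves H\"older-continuous and bounded on $[\ell_n,r_n]$; being independent of $\tau$, they are a fortiori jointly H\"older on $\overline{Q_n}$. Note also that \eqref{pdeU} has no zeroth-order term and no source term, so it is a homogeneous equation with inhomogeneous boundary data.

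\textbf{Existence on finite cylinders and gluing.} For each integer $j$ with $j > 1/n$, consider the bounded cylinder $Q_n^{(j)} := (1/n, j) \times (\ell_n, r_n)$, whose parabolic boundary $\partial_p Q_n^{(j)} = \big(\{1/n\}\times[\ell_n,r_n]\big) \cup \big([1/n,j]\times\{\ell_n,r_n\}\big)$ carries the continuous data prescribed by $g$. By the classical solvability theory for the first boundary value problem for uniformly parabolic equations with H\"older-continuous coefficients on a smooth bounded domain (as developed in the standard monographs on parabolic PDE, e.g.\ those of Friedman and of Ladyzhenskaya--Solonnikov--Ural'tseva), there is a function $\mathcal{U}_j \in \mathcal{C}^{1,2}(Q_n^{(j)}) \cap \mathcal{C}(\overline{Q_n^{(j)}})$ solving \eqref{pdeU} in $Q_n^{(j)}$ and agreeing with $g$ on $\partial_p Q_n^{(j)}$; continuous attainment of the data up to the spatial boundary and up to the two initial corners is obtained by the barrier technique, which applies here because the endpoints $\ell_n, r_n$ are regular for the non-degenerate one-dimensional diffusion, so that mere continuity of $g$ (without any corner compatibility conditions) suffices. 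For $j' < j$, the restriction $\mathcal{U}_j|_{\overline{Q_n^{(j')}}}$ and $\mathcal{U}_{j'}$ solve the same initial-boundary value problem on $Q_n^{(j')}$, hence coincide by the uniqueness established below; therefore the family $\{\mathcal{U}_j\}$ glues to a single $\mathcal{U} \in \mathcal{C}^{1,2}(Q_n)$ solving \eqref{pdeU} on $Q_n$ and satisfying $\mathcal{U}(1/n,x) = g(1/n,x)$, $\mathcal{U}(\tau,\ell_n) = g(\tau,\ell_n)$ and $\mathcal{U}(\tau,r_n) = g(\tau,r_n)$ for all $(\tau,x) \in (1/n,\infty)\times(\ell_n,r_n)$.

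\textbf{Uniqueness and boundedness.} Both follow from the parabolic weak maximum principle, available since \eqref{pdeU} has no zeroth-order term. On each bounded cylinder $Q_n^{(j)}$ one gets $\sup_{\overline{Q_n^{(j)}}} |\mathcal{U}_j| \le \sup_{\partial_p Q_n^{(j)}} |g| \le \sup_{(0,\infty)\times I} |g| < \infty$; since this bound does not depend on $j$, the glued solution $\mathcal{U}$ is bounded on $[1/n,\infty)\times[\ell_n,r_n]$, as asserted. For uniqueness, suppose $\widetilde{\mathcal{U}}$ is another solution of \eqref{pdeU} of class $\mathcal{C}^{1,2}(Q_n)$, bounded on $[1/n,\infty)\times[\ell_n,r_n]$ and attaining the same boundary data; then $\mathcal{U} - \widetilde{\mathcal{U}}$ is a bounded classical solution of the homogeneous equation vanishing on $\partial_p Q_n^{(j)}$ for every $j$, and the maximum principle forces it to vanish on each $Q_n^{(j)}$, hence on all of $Q_n$.

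\textbf{Main obstacle.} The only genuinely delicate point is the continuous attainment of the boundary data up to the lateral boundary and, in particular, at the two initial corners $(1/n,\ell_n)$ and $(1/n,r_n)$, where only continuity of $g$ is assumed rather than the usual corner compatibility conditions; this is precisely where one must invoke the barrier construction, which is unproblematic here because the spatial domain is a non-degenerate interval. Everything else is a direct citation of the standard linear parabolic theory together with the maximum principle.
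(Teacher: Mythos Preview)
Your proposal is correct and follows essentially the same route as the paper: verify uniform parabolicity on $[\ell_n,r_n]$ via continuity of $\mathfrak{s}(\cdot)$, check that $\mathfrak{s}^2(\cdot)$ and $(\mathfrak{b}\,\mathfrak{s})(\cdot)$ inherit H\"older continuity, and then invoke the standard linear parabolic theory together with the maximum principle. The paper compresses all of this into a direct citation of Friedman's existence/uniqueness theorem and maximum principle, whereas you spell out the finite-cylinder-and-gluing argument for the semi-infinite time interval and flag the corner-regularity issue handled by barriers; these are elaborations of the same argument rather than a different approach.
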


\begin{proof}
	The continuity of the function $\mathfrak{s}(\cdot)$ yields $\min_{x \in [\ell_n, r_n]} |\mathfrak{s}(x)| > 0$. Moreover, our assumptions imply that the functions $\mathfrak{s}^2(\cdot)$ and $\mathfrak{b}(\cdot)\mathfrak{s}(\cdot)$ are also uniformly H\"older-continuous on $[\ell_n, r_n]$. Thus, the existence and uniqueness result of Theorem~3.9 in \citet{Friedman_PDE}, and the maximum principle of Theorem~2.1 in this same book, yield the statement.
\end{proof}

We can now show that the function $U(\cdot, \cdot)$ of \eqref{U} solves the Cauchy problem of \eqref{pdeU}, \eqref{icU}.

\begin{prop}{\bf Stochastic representation of a solution to the Cauchy problem.}   
 \label{Prop3}
Under the assumptions of Lemma~\ref{L classical}, the function $U(\cdot\,, \cdot)$ is of class ${\cal C}([0, \infty) \times I) \cap {\cal C}^{1,2}((0, \infty) \times I)$ and solves the Cauchy problem of \eqref{pdeU}, \eqref{icU}.
\end{prop}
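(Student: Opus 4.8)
The plan is to localize to the sub-intervals $(\ell_n,r_n)$ of \eqref{3}, solve the Cauchy--Dirichlet problem there with boundary data supplied by $U$ itself, and then identify that classical solution with $U$ through a stochastic representation obtained from It\^o's formula and the strong Markov property.

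\textbf{Step 1 (a classical solution on a box).} Fix $n\in\N$. By Proposition~\ref{P continuity} the function $U$ of \eqref{U} is continuous on $[0,\infty)\times I$, and it is bounded since $0\le U\le 1$. Hence Lemma~\ref{L classical}, applied with the bounded continuous boundary datum $g=U$, produces a bounded function $\mathcal{U}_n\in\mathcal{C}^{1,2}\big((1/n,\infty)\times(\ell_n,r_n)\big)$ that solves \eqref{pdeU} on this box and matches $U$ on its parabolic boundary: $\mathcal{U}_n(1/n,\cdot)=U(1/n,\cdot)$ on $[\ell_n,r_n]$, and $\mathcal{U}_n(\cdot,\ell_n)=U(\cdot,\ell_n)$, $\mathcal{U}_n(\cdot,r_n)=U(\cdot,r_n)$ on $[1/n,\infty)$.

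\textbf{Step 2 (stochastic representation of $\mathcal{U}_n$).} Fix $(\tau,\xi)\in(1/n,\infty)\times(\ell_n,r_n)$, run $X(\cdot)$ under $\Prob_\xi$, and set $\rho_n:=S_n\wedge(\tau-1/n)$. Since $\mathcal{U}_n$ solves \eqref{pdeU}, It\^o's formula shows that $s\mapsto\mathcal{U}_n(\tau-s,X(s))$ is a continuous local martingale on $[0,\rho_n)$; it is bounded (as $\mathcal{U}_n$ is bounded on the closed box) and, by continuity of $\mathcal{U}_n$ up to the parabolic boundary, extends continuously to $s=\rho_n$, so optional sampling gives $\mathcal{U}_n(\tau,\xi)=\E_\xi\big[\mathcal{U}_n(\tau-\rho_n,X(\rho_n))\big]$. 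On $\{S_n\le\tau-1/n\}$ we have $\rho_n=S_n$, $X(\rho_n)\in\{\ell_n,r_n\}$, and $\tau-\rho_n\ge 1/n$, so the boundary condition gives $\mathcal{U}_n(\tau-\rho_n,X(\rho_n))=U(\tau-\rho_n,X(\rho_n))$; on the complement $\rho_n=\tau-1/n$ and $X(\rho_n)\in(\ell_n,r_n)$, so the initial condition gives $\mathcal{U}_n(1/n,X(\tau-1/n))=U(1/n,X(\tau-1/n))$. Hence
\[
\mathcal{U}_n(\tau,\xi)=\E_\xi\big[U(\tau-\rho_n,X(\rho_n))\big].
\]

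\textbf{Step 3 (the same representation for $U$, and conclusion).} Since $\rho_n\le\tau-1/n<\infty$ and $S_n<S$ on $\{S_n<\infty\}$ (the exit point $X(S_n)\in\{\ell_n,r_n\}$ is strictly interior to $(\ell_{n+1},r_{n+1})\subset I$, so the process continues past $S_n$), we have $\rho_n<S$ $\Prob_\xi$-a.s.; thus, a.s., $\{S>\tau\}$ equals the event that the process restarted at time $\rho_n$ from $X(\rho_n)\in I$ explodes after the remaining time $\tau-\rho_n$. Applying the strong Markov property of $X(\cdot)$ at $\rho_n$ yields $U(\tau,\xi)=\E_\xi\big[\Prob_{X(\rho_n)}(S>\tau-\rho_n)\big]=\E_\xi\big[U(\tau-\rho_n,X(\rho_n))\big]$. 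Comparing with Step~2, $U\equiv\mathcal{U}_n$ on $(1/n,\infty)\times(\ell_n,r_n)$; hence $U$ is of class $\mathcal{C}^{1,2}$ there and satisfies \eqref{pdeU}. Letting $n\uparrow\infty$, and using that \eqref{pdeU} is a local statement while the boxes exhaust $(0,\infty)\times I$, we obtain $U\in\mathcal{C}^{1,2}((0,\infty)\times I)$ solving \eqref{pdeU}. Finally $U\in\mathcal{C}([0,\infty)\times I)$ by Proposition~\ref{P continuity}, and $U(0,\xi)=\Prob_\xi(S>0)=1$ because $\xi\in(\ell,r)$ and $X(\cdot)$ has continuous paths, which is \eqref{icU}.

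\textbf{Main obstacle.} The delicate point is the martingale argument in Step~2: one must justify that $s\mapsto\mathcal{U}_n(\tau-s,X(s))$ is a genuine (not merely local) martingale up to and including the random time $\rho_n$, while $\mathcal{U}_n$ is only known to be continuous—not $\mathcal{C}^{1,2}$—at the parabolic boundary of the box. This is handled by the standard inner-approximation localization (stop at the exit times of $(\ell_n+1/k,r_n-1/k)$, apply It\^o and optional sampling there, then let $k\uparrow\infty$ using boundedness of $\mathcal{U}_n$, continuity up to the boundary, and bounded convergence), together with the elementary but essential observation that $\rho_n<S$ almost surely, which is what makes the strong-Markov identity in Step~3 legitimate.
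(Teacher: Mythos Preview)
Your proof is correct and follows essentially the same strategy as the paper: localize to the box $(1/n,\infty)\times(\ell_n,r_n)$, invoke Lemma~\ref{L classical} with boundary datum $g=U$, and identify the resulting classical solution with $U$ via the bounded-martingale/optional-sampling argument combined with the strong Markov property at the stopping time $\rho_n=S_n\wedge(\tau-1/n)$. Your write-up is somewhat more explicit than the paper's about why $\rho_n<S$ almost surely and about the inner-approximation needed to handle the lack of $\mathcal{C}^{1,2}$ regularity at the parabolic boundary, but these are elaborations of the same argument rather than a different route.
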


\begin{proof}
	We have shown the continuity of the function $U(\cdot \, , \cdot)$ in Proposition~\ref{P continuity}. We now fix $(T, \xi) \in (1/n,\infty) \times (\ell_n, r_n)$ for some $n \in\N$ and show that 
the function $U(\cdot\,, \cdot)$ satisfies the Cauchy problem of \eqref{pdeU} in $(1/n, \infty) \times (\ell_n, r_n)$, which then yields the statement. 
\newpage
 Applying Lemma~\ref{L classical} with $g(\cdot \,, \cdot) = U(\cdot \, ,\cdot)$, we see that this Cauchy problem has a bounded classical solution $\,\mathcal{U}(\cdot \,, \cdot)$. Simple stochastic calculus then implies that $\,\mathcal{U}(T-(t \wedge \rho), X(t \wedge \rho)), 0 \leq t \leq T$ is a bounded $\Prob_\xi$-martingale, 
where $\rho$ denotes the smaller of $T-1/n$ and $S_n$. Optional sampling gives
\begin{align*}
	\mathcal{U}(T, \xi) = \E_\xi\big[\,\mathcal{U}(T- \rho, X(\rho))\big] = \E_\xi\big[\,U(T- \rho, X( \rho))\big] = U(T,\xi),
\end{align*}
where the last equality is a consequence of the strong Markov property of the diffusion $X(\cdot)$.  We have shown that $U(\cdot \,, \cdot)$ coincides with $\,\mathcal{U}(\cdot \,, \cdot)$, and thus  solves the Cauchy problem of \eqref{pdeU}, \eqref{icU}.
\end{proof}

The proof of Proposition~\ref{Prop3} resembles the arguments in \citet{JT}.  For similar results, see Section~3.5 in \citet{McKean_1969}, Theorem~5.6.1 in \citet{Friedman_SDE},   \citet{HS}, \citet{BHS}, and \citet{Ruf_hedging}. We emphasize that the H\"older  exponent in Lemma~\ref{L classical} need not be $1/2$, as often postulated in related questions. Inspired by the observations made in Section~4.11 of \citet{ItoMcKean}, we expect that studying solutions in the sense of distributions would allow us to weaken the assumption  of H\"older continuity in Proposition~\ref{Prop3}; this will be the subject of future research. Considerable progress in this direction has been made by  \citet{Wang2014}, indeed in a more general, and multi-dimensional, setting.

This Cauchy problem of \eqref{pdeU}, \eqref{icU}   admits the trivial solution $ \,\mathcal{U}(\cdot\,, \cdot) \equiv 1$; it may have lots of other solutions. The one we are interested in, the function   $U(\cdot\,, \cdot)$ defined in \eqref{U}, turns out to be its minimal nonnegative solution. The following characterization of this function is  analogous to the results  in Problem~3.5.1 of \citet{McKean_1969}  and in~Exercise 4.4.7 of \citet{KS1}; see also \citet{FK, FK_uncert} and \citet{Ruf_hedging}.

Proposition~\ref{Prop3} yields directly the following corollary: 

\begin{cor}{\bf Continuity of density function.} 
Under the conditions of Lemma~\ref{L classical}, the explosion time $S$ has  a  continuous (sub-)probability density function.
\end{cor}

\begin{prop}{\bf Upper bounds on $\,U(\cdot\,, \cdot)$, and minimality.} 
 \label{Prop1}
The function  $U(\cdot \, , \cdot)$,  defined in \eqref{U} as the tail of the distribution function of the explosion time $S$,   is dominated by every nonnegative classical supersolution $\mathcal{U}(\cdot, \cdot)$ of the Cauchy problem of \eqref{pdeU}, \eqref{icU}. 

Furthermore, under the conditions of Lemma~\ref{L classical}, the function $U(\cdot\,, \cdot)$ is the smallest nonnegative classical (super)solution of \eqref{pdeU}, \eqref{icU}.
\end{prop}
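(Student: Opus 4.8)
The plan is to run the verification/supermartingale argument that links the parabolic operator in \eqref{pdeU} to the generator of the diffusion $X(\cdot)$ of \eqref{1}, exactly in the spirit of the proof of Proposition~\ref{Prop3}, and to combine it with the localization already built into \eqref{3}. Throughout, a \emph{nonnegative classical supersolution} of \eqref{pdeU}, \eqref{icU} is a function $\mathcal U \in \mathcal C([0,\infty)\times I)\cap \mathcal C^{1,2}((0,\infty)\times I)$ with $\mathcal U\ge 0$, with $\mathcal U(0,\cdot)\ge 1$ on $I$, and with $\partial_\tau \mathcal U \ge \tfrac{\mathfrak s^2}{2}\,\partial^2_{xx}\mathcal U + \mathfrak b\,\mathfrak s\,\partial_x \mathcal U$ on $(0,\infty)\times I$. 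Fix such a $\mathcal U$ and a point $(T,\xi)\in(0,\infty)\times I$ (for $T=0$ the inequality $\mathcal U(0,\xi)\ge 1 = U(0,\xi)$ is immediate). For every $n\in\N$ with $1/n<T$ and $\xi\in(\ell_n,r_n)$, set $\rho_n:=S_n\wedge(T-1/n)$ and apply Itô's formula to $Y_t^{(n)}:=\mathcal U\big(T-(t\wedge\rho_n),\,X(t\wedge\rho_n)\big)$: on $[0,\rho_n]$ the state process stays in the compact interval $[\ell_n,r_n]\subset I$ and the time argument stays in $[1/n,T]$, so the formula is legitimate and $Y^{(n)}$ is bounded; using $\dx X(t)=\mathfrak s(X(t))\,\dx W(t)+\mathfrak s(X(t))\mathfrak b(X(t))\,\dx t$, its drift equals $-\big(\partial_\tau\mathcal U-\tfrac{\mathfrak s^2}{2}\partial^2_{xx}\mathcal U-\mathfrak b\,\mathfrak s\,\partial_x\mathcal U\big)(T-t,X(t))\le 0$ by the supersolution inequality. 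Hence $Y^{(n)}$ is a bounded local supermartingale, therefore a true supermartingale, and optional sampling at the bounded time $\rho_n\le T-1/n$ gives $\mathcal U(T,\xi)=Y^{(n)}_0\ge \E_\xi\big[\mathcal U(T-\rho_n,X(\rho_n))\big]$.

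Next, discard the contribution of the event $\{\rho_n=S_n<T-1/n\}$ using $\mathcal U\ge 0$; on its complement $\{S_n\ge T-1/n\}$ one has $\rho_n=T-1/n$, so $\mathcal U(T,\xi)\ge \E_\xi\big[\mathcal U(1/n,X(T-1/n))\,\1_{\{S_n\ge T-1/n\}}\big]$. Let $n\uparrow\infty$. On the event $\{S>T\}$ we have $S_n\uparrow S>T$ by \eqref{3}, so $S_n\ge T-1/n$ for all large $n$; moreover $X(\cdot)$ is path-continuous at $T$ there, and $(1/n,X(T-1/n))\to(0,X(T))$, whence $\mathcal U(1/n,X(T-1/n))\to\mathcal U(0,X(T))\ge 1$ by continuity of $\mathcal U$ up to $\tau=0$ together with $\mathcal U(0,\cdot)\ge 1$. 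Since the integrands are nonnegative, Fatou's lemma yields $\mathcal U(T,\xi)\ge \E_\xi\big[\liminf_n \mathcal U(1/n,X(T-1/n))\,\1_{\{S_n\ge T-1/n\}}\big]\ge \E_\xi[\1_{\{S>T\}}]=\Prob_\xi(S>T)=U(T,\xi)$. This establishes the first (domination) assertion, with no hypotheses beyond the standing assumptions of Section~\ref{S:essentials}.

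For the minimality assertion, invoke Proposition~\ref{Prop3}: under the hypotheses of Lemma~\ref{L classical} the function $U(\cdot,\cdot)$ is itself a nonnegative classical solution — a fortiori a nonnegative classical supersolution — of the Cauchy problem \eqref{pdeU}, \eqref{icU}. Combined with the domination just proved, $U(\cdot,\cdot)$ is therefore the smallest nonnegative classical supersolution, and in particular the smallest nonnegative classical solution, of that problem. The only delicate point in the whole argument is the passage to the limit at the initial boundary $\tau=0$: one cannot evaluate $\mathcal U$ at $\tau=0$ directly while still keeping $X(\cdot)$ localized in space, which forces the two-parameter stopping rule $\rho_n=S_n\wedge(T-1/n)$; the interchange of limit and expectation must then be justified by Fatou, which is legitimate precisely because $\mathcal U$ is nonnegative and continuous up to $\tau=0$ and $X(\cdot)$ is path-continuous at $T$ on $\{S>T\}$. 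Checking that the localized, time-reversed composition $Y^{(n)}$ is a genuine (not merely local) supermartingale is routine once one observes that it — or, equivalently, the integrand of its martingale part — is bounded on $[0,\rho_n]$.
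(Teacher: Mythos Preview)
Your proof is correct and follows essentially the same supermartingale-plus-localization approach as the paper: the paper stops at $t\wedge S_n$, invokes the fact that a nonnegative local supermartingale is a true supermartingale, and then passes to the limit by monotone convergence, whereas you localize additionally in time via $\rho_n=S_n\wedge(T-1/n)$, use boundedness on the compact space-time rectangle, and pass to the limit with Fatou. The latter treats the boundary $\tau=0$ more explicitly, but the two arguments are variants of the same idea.
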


\begin{proof}
Consider any continuous function $\, \mathcal{U} : [ 0 , \infty) \times I \ra [ 0 , \infty)$ of class $\,{\cal C}^{1,2} ( (0 , \infty) \times I )$ that satisfies the partial differential inequality
$$
{ \partial \mathcal{U} \over \partial \tau} (\tau, x) \ge{  \mathfrak{s}^2 (x) \over 2} { \partial^2 \mathcal{U}  \over \partial x^2} (\tau, x)  + \mathfrak{ b}(x) \mathfrak{s} ( x) {  \partial \mathcal{U}  \over \partial x} (\tau, x); \qquad (\tau, x) \in (0, \infty) \times I,
$$
as well as the initial condition $\,\mathcal{U} (0 , \cdot) \ge1$. 

For any given $T \in (0, \infty)$, it is checked readily on the strength of this inequality that the process  $\,\mathcal{U} ( T - (t \wedge S_n), X(t\wedge S_n) )$, $0 \le t \leq T$ is a local $\,\Prob_\xi-$supermartingale; as it is nonnegative, this process is actually a true $\,\Prob_\xi-$supermartingale, so 
\begin{equation*}
\mathcal{U} (T, \xi )  \ge  \E_\xi \big[ \, \mathcal{U} \big( T - (T \wedge S_n), X(T\wedge S_n) \big)  \big] \ge  \E_\xi \left[  \1_{\{S_n >T\}}\, \mathcal{U} ( 0, X(T ) )  \right]\ge \Prob_\xi \left( S_n >T \right) 
\end{equation*}
 by optional sampling. Letting $n \uparrow \infty$, we obtain the first claimed result from monotone convergence; the second then follows from Proposition \ref{Prop3}. 
\end{proof}

If there are no explosions, then the Cauchy problem of \eqref{pdeU}, \eqref{icU} actually has a unique bounded classical solution.

\begin{prop}{\bf Unique bounded solution.}   
\label{C:uniquePDE}
  Assume that the function $\,\mathcal{U}(\cdot \,, \cdot) \equiv 1$ is the smallest nonnegative classical solution of the Cauchy problem   \eqref{pdeU}, \eqref{icU}, and let   $V(\cdot \,, \cdot)$ be any bounded classical solution of this Cauchy problem. Then we have $V(\cdot \,, \cdot) \equiv \, \mathcal{U}(\cdot \,, \cdot) \equiv 1$.
\end{prop}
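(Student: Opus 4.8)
The plan is to exploit a structural feature of the operator in \eqref{pdeU}: it has no zeroth-order term, so every constant solves \eqref{pdeU}, and hence every affine combination $aV+b$ of a solution $V$ with a constant is again a solution, of the same regularity class $\,\mathcal{C}([0,\infty)\times I)\cap\mathcal{C}^{1,2}((0,\infty)\times I)$. The idea is to manufacture, out of the given bounded solution $V(\cdot,\cdot)$, two \emph{nonnegative} classical solutions of the full Cauchy problem \eqref{pdeU}, \eqref{icU}, apply the minimality hypothesis to each, and thereby sandwich $V$ between $1$ and $1$.

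Concretely, first fix a constant $M>1$ with $|V(\tau,x)|\le M$ for all $(\tau,x)\in[0,\infty)\times I$ (such an $M$ exists because $V$ is bounded; enlarge the bound if needed so that the strict inequality $M>1$ holds). I would then set $\,\widetilde{\mathcal{U}}:=(M-V)/(M-1)$. By the remark above it is of the required regularity and solves \eqref{pdeU}; it is nonnegative because $V\le M$; and it satisfies $\widetilde{\mathcal{U}}(0,\cdot)=(M-1)/(M-1)=1$, so it is a nonnegative classical solution of \eqref{pdeU}, \eqref{icU}. The minimality hypothesis then forces $\widetilde{\mathcal{U}}(\cdot,\cdot)\ge\mathcal{U}(\cdot,\cdot)\equiv1$, which rearranges to $V\le1$. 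Next I would set $\,\mathcal{U}^{*}:=(M+V)/(M+1)$; by the same three checks it is a nonnegative classical solution of \eqref{pdeU}, \eqref{icU} (nonnegative because $V\ge-M$, and $\mathcal{U}^{*}(0,\cdot)=(M+1)/(M+1)=1$), so minimality gives $\mathcal{U}^{*}(\cdot,\cdot)\ge1$, i.e.\ $V\ge1$. Combining the two inequalities yields $V(\cdot,\cdot)\equiv1\equiv\mathcal{U}(\cdot,\cdot)$, as claimed.

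I would emphasize that this argument uses nothing beyond the stated hypothesis; in particular it needs neither the H\"older-continuity assumptions of Lemma~\ref{L classical} nor any probabilistic input, in contrast with the route through the supermartingale/optional-sampling argument used for Proposition~\ref{Prop1}. There is no real obstacle here: the only point demanding care is the bookkeeping on the normalizing constants, chosen precisely so that both manufactured functions have initial value exactly $1$ and hence fall under the minimality hypothesis — this is where the requirement $M>1$ (rather than merely $M\ge1$) is used, to keep the denominator $M-1$ away from zero.
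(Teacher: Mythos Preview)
Your proof is correct and uses essentially the same idea as the paper's: exploit the absence of a zeroth-order term so that affine images of $V$ are again solutions, manufacture nonnegative solutions of the Cauchy problem, and invoke the minimality hypothesis. Your implementation is in fact slightly more streamlined --- you apply minimality twice directly with the symmetric pair $(M-V)/(M-1)$ and $(M+V)/(M+1)$ to get the two-sided bound, whereas the paper first shows $\widehat V:=(V+K)/(1+K)\ge 1$ and then argues by contradiction via a second affine image $(K_2-\widehat V)/(K_2-1)$ --- but the underlying mechanism is identical.
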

\begin{proof}
Let  $\,V: [0, \infty) \times I \ra [-K, K]$ be a bounded classical solution of the Cauchy problem   \eqref{pdeU}, \eqref{icU}, for some    $K\in (0,\infty).$  Then the function $\widehat{V}(\cdot, \cdot) = \big(V(\cdot, \cdot) + K\big)/(1+K)$ is a  bounded, nonnegative classical solution of the same Cauchy problem, so by assumption we must have   $\widehat{V}(\cdot, \cdot) \geq 1$.   

\smallskip
Let us assume that    $\widehat{V}(\cdot\,, \cdot)$ is not identically equal to the  constant function $\,\mathcal{U}(\cdot, \cdot) \equiv 1$, so that we have $K_2 := \sup_{(t,x) \in [0,\infty) \times (\ell, r)}$ $ \widehat{V}(t,x) \in (1,2)\,$. The function $\widetilde{V}(\cdot\,, \cdot) = \big(K_2- \widehat{V}(\cdot, \cdot)\big) /(K_2-1)$ is then a   classical solution of the Cauchy problem   \eqref{pdeU}, \eqref{icU} with values in $[0,1]$, and not identically equal to the  constant function $\,\mathcal{U}(\cdot\,, \cdot) \equiv 1$. But this  contradicts the assumption that the function $\,\mathcal{U}(\cdot\,, \cdot) \equiv 1$ is the smallest nonnegative classical solution of the Cauchy problem   \eqref{pdeU}, \eqref{icU}. 
\end{proof}

We note that it is not possible to remove the boundedness assumption in Proposition~\ref{C:uniquePDE}; see for example~\citet{Rosenbloom_1958}.  \citet{BX_2010} characterize one-dimensional time-homogeneous Cauchy problems with a unique solution.   For the relevance of super- and sub-solutions in the study of partial differential equations of parabolic type we refer to the recent paper by \citet{BS2012} and the references therein.  Let us also note that the characterizations of Propositions \ref{Prop1} and \ref{C:uniquePDE} are impervious to boundary conditions at the endpoints of the state space $\, I = (\ell, r)\,$.

\subsection{Connections with second-order ordinary differential equations}

Let us consider now, for any given real number $\la >0$, the {Laplace} transform or ``resolvent" of  the function $U( \cdot, \xi)$ in \eqref{U}, namely
\begin{align}
\label{hatU}
\widehat{U}_\la (\xi)=\int_0^\infty \exp(- \la T) U(T, \xi) \dx T = \int_0^\infty \exp( - \la T) \Prob_\xi(S>T) \dx T = { 1 \over \,\la\,} \Big( 1 - \E_\xi \big[ \exp( -\la S) \big] \Big). 
\end{align}

\begin{prop}{\bf Stochastic representation of a solution to an ordinary differential equation.} 
\label{Prop4}
If the functions $\mathfrak{b}(\cdot)$ and $\mathfrak{s}(\cdot)$ are continuous on $I$, then the function $\,\widehat{U}_\la (\cdot)\,$ is of class $\,{\cal C}^{2}(I)\,$ and satisfies the second-order ordinary differential equation
\begin{equation}
\label{u}
\frac{\mathfrak{s}^2 (x)}{2} \mathfrak{u}^{\prime \prime} (x) + \mathfrak{b}  (x) \mathfrak{s} (x)\mathfrak{u}^{\prime  } (x)  -  \la  \mathfrak{u}  (x) + 1 = 0, \quad x \in I.
\end{equation}
\end{prop}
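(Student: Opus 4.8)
The plan is to derive the ODE \eqref{u} from the PDE \eqref{pdeU} by taking the Laplace transform in the time variable, exploiting the representation of $U(\cdot,\cdot)$ established in Proposition~\ref{Prop3}. First I would argue that it suffices to work locally: fix $n \in \N$ and $\xi \in (\ell_n, r_n)$, and recall from Proposition~\ref{Prop3} that, under additional H\"older hypotheses, $U(\cdot,\cdot)$ is of class ${\cal C}^{1,2}$ and solves \eqref{pdeU} classically. To work merely under the continuity assumption stated here, I would instead invoke a localization/regularization argument: the function $\widehat U_\la$ is the $\la$-resolvent of the diffusion, and by the strong Markov property at the first exit time $S_n$ from $(\ell_n,r_n)$ one obtains the identity
\begin{align*}
	\widehat{U}_\la(\xi) = \frac{1}{\la}\Big(1 - \E_\xi\big[\exp(-\la (S \wedge S_n))\big]\Big) + \E_\xi\big[\exp(-\la S_n)\,\mathbf{1}_{\{S_n < S\}}\,\widehat{U}_\la(X(S_n))\big],
\end{align*}
where $X(S_n) \in \{\ell_n, r_n\}$. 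This exhibits $\widehat U_\la$ on $(\ell_n, r_n)$ as the solution of a boundary-value problem for the operator $\tfrac{\mathfrak{s}^2}{2}\tfrac{d^2}{dx^2} + \mathfrak{b}\mathfrak{s}\tfrac{d}{dx} - \la$ with inhomogeneous term $1$ and Dirichlet data $\widehat U_\la(\ell_n), \widehat U_\la(r_n)$.

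The key computational step is to multiply \eqref{pdeU} by $\exp(-\la T)$ and integrate over $T \in (0,\infty)$. Using $\int_0^\infty e^{-\la T} \partial_\tau U(T,x)\,\dx T = \la \widehat U_\la(x) - U(0,x) = \la\widehat U_\la(x) - 1$ (the initial condition \eqref{icU}), and interchanging integration in $T$ with the spatial derivatives on the right-hand side — justified by dominated convergence since $0 \le U \le 1$ and the spatial derivatives are locally bounded by parabolic interior estimates — one gets exactly
\begin{align*}
	\la \widehat{U}_\la(x) - 1 = \frac{\mathfrak{s}^2(x)}{2}\,\widehat{U}_\la^{\prime\prime}(x) + \mathfrak{b}(x)\mathfrak{s}(x)\,\widehat{U}_\la^{\prime}(x),
\end{align*}
which is \eqref{u}. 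The ${\cal C}^2$ regularity of $\widehat U_\la$ then follows either from the elliptic equation itself (bootstrapping: the right side forces $\widehat U_\la^{\prime\prime}$ to be continuous once $\widehat U_\la$ and $\widehat U_\la^{\prime}$ are, since $\mathfrak{s}^2$ is continuous and bounded away from $0$ locally), or directly from standard ODE theory applied to the boundary-value problem above.

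The main obstacle is handling the regularity under only the continuity hypothesis on $\mathfrak{b}(\cdot)$ and $\mathfrak{s}(\cdot)$, rather than the H\"older condition required for Proposition~\ref{Prop3}: the interchange of the Laplace integral with spatial differentiation, and the very existence of $\widehat U_\la^{\prime\prime}$, cannot be read off directly from \eqref{pdeU} since $U(\cdot,\cdot)$ need not be ${\cal C}^{1,2}$ in this generality. I expect the cleanest route is to bypass the PDE entirely: show first, by the Markov-property identity above together with continuity of $U(\cdot,\cdot)$ (Proposition~\ref{P continuity}) and dominated convergence, that $\widehat U_\la$ is continuous on $I$; then observe that $\widehat U_\la$ must coincide on each $(\ell_n,r_n)$ with the unique ${\cal C}^2$ solution of the ODE boundary-value problem (which exists and is unique because $\mathfrak{s}^2$ is continuous and strictly positive on the compact interval, so the operator is uniformly elliptic there), by an optional-sampling / Dynkin argument applied to $\exp(-\la(t\wedge S_n))\,\mathfrak u(X(t \wedge S_n)) + \int_0^{t\wedge S_n}\exp(-\la s)\,\dx s$; and finally let $n \uparrow \infty$. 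This avoids needing any smoothness of $U$ in $x$ beyond what the ODE itself produces.
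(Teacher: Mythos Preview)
Your second route---bypassing the PDE entirely, constructing on each $(\ell_n,r_n)$ the unique ${\cal C}^2$ solution of the Dirichlet problem for \eqref{u} with boundary data $\widehat U_\la(\ell_n),\widehat U_\la(r_n)$, and identifying it with $\widehat U_\la$ via optional sampling and the strong Markov property---is exactly the paper's proof. The paper uses the equivalent martingale $\exp(-\la(t\wedge S_n))\big(\mathfrak u(X(t\wedge S_n))-1/\la\big)$ in place of your $\exp(-\la(t\wedge S_n))\,\mathfrak u(X(t\wedge S_n))+\int_0^{t\wedge S_n}e^{-\la s}\,\dx s$, and it fills in the one point you leave implicit: well-posedness of the Dirichlet problem is not automatic from ``uniform ellipticity'' alone (one must rule out $\la$ being an eigenvalue), so the paper shows directly that the homogeneous equation \eqref{u2} with zero boundary data has only the trivial solution, by a short maximum-principle argument exploiting $\la>0$, and then invokes Hartman for existence. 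Your first route via Laplace-transforming the PDE \eqref{pdeU} is, as you correctly diagnose, unavailable here since Proposition~\ref{Prop3} requires H\"older rather than merely continuous coefficients.
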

\begin{proof}  
For some fixed $n \in \N$, we consider the ordinary differential equation
\begin{equation}
\label{u2}
{  \mathfrak{s}^2 (x) \over 2}v^{\prime \prime} (x) + \mathfrak{b}  (x)\mathfrak{s} (x) v^{\prime  } (x)  - \la  v (x)  = 0,\qquad x \in (\ell_n, r_n)
\end{equation}
with boundary condition $v(\ell_n) = v(r_n) = 0$, and note that it has the unique solution $v(\cdot) \equiv 0$. To see why, let us assume that \eqref{u2} has a non-constant solution $\widehat{v}(\cdot)$, and try to arrive at a contradiction.  This solution $\widehat{v}(\cdot)$ must have a local maximum or minimum at some $y \in (\ell_n, r_n)$ with $\widehat{v}^{\,\prime}(y) = 0$; assuming that $y$ is the location of a positive local maximum with $\widehat{v}(y) >0$, we obtain the absurdity $0>\mathfrak{s}^2 (y)\,\widehat{v}^{\,\prime \prime} (y)  = 2\lambda \widehat{v}(y)$. This yields the asserted uniqueness.

Thereom~12.3.1 in  \citet{Hartman1982} shows now that 
the differential equation in \eqref{u} has a unique solution in $(\ell_n, r_n)$ with boundary conditions $\mathfrak{u}(\ell_n) = g_1$ and  $\mathfrak{u}(r_n) = g_2$ for all $n \in \N$ and $g_1, g_2 \in \R$.

Finally, we fix a $\xi \in I$ and a sufficiently large $n \in \N$ so that $\xi \in (\ell_n, r_n)$ and let $\mathfrak{u}(\cdot)$ denote the solution of the differential equation in \eqref{u} with boundary conditions $\mathfrak{u}(\ell_n) = \widehat{U}_\la(\ell_n)$ and  $\mathfrak{u}(r_n) = \widehat{U}_\la(r_n)$. Simple stochastic calculus shows that the process
\begin{align}  \label{E M}
	M(t) := \exp \big(-\lambda(t \wedge S_n)\big) \left(\mathfrak{u}\big(X(t \wedge S_n)\big) - \frac{1}{\, \lambda \,}\right), \qquad  0 \leq t < \infty
\end{align}
 is a $\Prob_\xi$-local martingale; we conclude that $M(\cdot)$ is a uniformly integrable martingale, as it is bounded.  Since classical results, recalled in \eqref{eq:Sn finite}, yield   $\,\Prob_\xi(S_n<\infty) = 1$,  we obtain that 
 \begin{align*}
	\mathfrak{u}(\xi) - \frac{1}{\,\lambda\,} &= \E_\xi\left[\exp(-\lambda S_n) \left(\mathfrak{u}(X(S_n)) - \frac{1}{\,\lambda\,}\right)\right] = \E_\xi\left[\exp(-\lambda S_n) \left(\widehat{U}_\la(X(S_n)) - \frac{1}{\,\lambda\,}\right)\right] \\
	&= -\frac{1}{\,\lambda\,}\, \E_\xi\left[\exp(-\lambda S_n) \cdot \E_{X(S_n)}\left[\exp(-\lambda S)\right] \right]  =-\frac{1}{\,\lambda\,} \,\E_\xi\left[\exp(-\lambda S) \right] = \widehat{U}_\la(\xi) - \frac{1}{\,\lambda\,}\,;
\end{align*}
 the result now follows.
\end{proof} 

For related results, see Theorem~5.9.3 in \citet{Ito_2006}, and Theorem~13.16 on page~51 in Volume~II of \citet{Dynkin_1965}. 
Once again, the ordinary differential equation in \eqref{u} may have lots of classical solutions, in addition to the obvious  $ \mathfrak{u}  (\cdot) \equiv 1 / \la$. 

The function of \eqref{hatU} we are interested in, turns out to be the  smallest nonnegative classical supersolution of \eqref{u}.

\begin{prop}{\bf Upper bounds on $\,\widehat{U}_\la(  \cdot)$, and minimality.}
 \label{Prop5}
The function  $\widehat{U}_\la (  \cdot)$, defined in \eqref{hatU} as the Laplace transform of the tail of the distribution function of the explosion time $S$,   is dominated by every nonnegative classical supersolution of the second-order equation \eqref{u}.  

Furthermore, under the conditions of Proposition \ref{Prop4}, the function $\widehat{U}_\la (  \cdot)$ is the smallest nonnegative classical (super)solution of  \eqref{u}.
\end{prop}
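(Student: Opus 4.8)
The plan is to follow the pattern of the proof of Proposition~\ref{Prop1}, now using the resolvent (elliptic) operator in place of the parabolic one, and reusing the localization and the form of the process $M(\cdot)$ from \eqref{E M} in the proof of Proposition~\ref{Prop4}.

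For the domination statement, let $\mathfrak{u}\in\mathcal{C}^2(I)$ be any nonnegative classical supersolution of \eqref{u}, meaning that
\[
\frac{\mathfrak{s}^2(x)}{2}\,\mathfrak{u}''(x)+\mathfrak{b}(x)\,\mathfrak{s}(x)\,\mathfrak{u}'(x)-\la\,\mathfrak{u}(x)+1\;\le\;0,\qquad x\in I .
\]
First I would apply It\^o's formula to $\,\exp(-\la(t\wedge S_n))\big(\mathfrak{u}(X(t\wedge S_n))-\tfrac{1}{\la}\big)$, which is of the same form as the process $M(\cdot)$ in \eqref{E M}, and observe that its drift equals $\exp(-\la(t\wedge S_n))$ times the left-hand side of the displayed inequality, hence is nonpositive; thus this process, call it $M(\cdot)$, is a $\Prob_\xi$-local supermartingale. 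Since $\mathfrak{u}\ge 0$ is continuous on the compact interval $[\ell_n,r_n]$ and $\exp(-\la(t\wedge S_n))\in(0,1]$, the process $M(\cdot)$ is bounded (between $-1/\la$ and $\max_{[\ell_n,r_n]}\mathfrak{u}$), hence a genuine $\Prob_\xi$-supermartingale.

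Then I would pass to two successive limits. Using $\Prob_\xi(S_n<\infty)=1$ from \eqref{eq:Sn finite}, optional sampling together with bounded convergence as $t\uparrow\infty$, and the bound $\mathfrak{u}(X(S_n))\ge 0$, one obtains
\[
\mathfrak{u}(\xi)-\frac{1}{\la}\;=\;M(0)\;\ge\;\E_\xi\big[M(S_n)\big]\;=\;\E_\xi\Big[\exp(-\la S_n)\big(\mathfrak{u}(X(S_n))-\tfrac{1}{\la}\big)\Big]\;\ge\;-\frac{1}{\la}\,\E_\xi\big[\exp(-\la S_n)\big],
\]
so that $\mathfrak{u}(\xi)\ge \la^{-1}\big(1-\E_\xi[\exp(-\la S_n)]\big)$. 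Letting $n\uparrow\infty$, so that $S_n\uparrow S$ and $\exp(-\la S_n)\downarrow\exp(-\la S)$ with the convention $\exp(-\la S)=0$ on $\{S=\infty\}$, monotone convergence and the identity \eqref{hatU} give $\mathfrak{u}(\xi)\ge\la^{-1}\big(1-\E_\xi[\exp(-\la S)]\big)=\widehat{U}_\la(\xi)$ for every $\xi\in I$; that is, $\widehat{U}_\la(\cdot)\le\mathfrak{u}(\cdot)$. For the minimality assertion, Proposition~\ref{Prop4} shows that, under its hypotheses, $\widehat{U}_\la(\cdot)$ is itself a classical solution of \eqref{u}, while \eqref{hatU} shows $0\le\widehat{U}_\la\le 1/\la$; being a nonnegative classical supersolution that is dominated by every other one, it is the smallest such.

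I expect the only point requiring a little care to be the justification that the bounded local supermartingale $M(\cdot)$ is a true supermartingale to which optional sampling may be applied at the almost-surely finite stopping time $S_n$; this follows from the boundedness of $M(\cdot)$, itself a consequence of the continuity of $\mathfrak{u}$ on the compactum $[\ell_n,r_n]$, together with \eqref{eq:Sn finite}. Everything else, namely the It\^o computation and the two monotone/bounded-convergence passages, is entirely routine and parallels the proof of Proposition~\ref{Prop1} line by line.
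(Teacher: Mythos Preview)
Your proof is correct and follows essentially the same route as the paper: apply It\^o's formula to the process $M(\cdot)$ of \eqref{E M}, use boundedness on $[\ell_n,r_n]$ to pass from local to true supermartingale, apply optional sampling at $S_n$ together with $\mathfrak{u}(X(S_n))\ge 0$, and let $n\uparrow\infty$. The only cosmetic difference is that the paper rewrites $\la^{-1}\big(1-\E_\xi[\exp(-\la S_n)]\big)$ as $\int_0^\infty \exp(-\la t)\,\Prob_\xi(S_n>t)\,\dx t$ before passing to the limit, rather than invoking \eqref{hatU} directly as you do.
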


\begin{proof} Consider any function $ \mathfrak{u}: I \ra [0, \infty)$ of class $ {\cal C}^2 (I)$ that satisfies 
$$
{ \mathfrak{s}^2 (x) \over 2} \mathfrak{u}^{\prime \prime} (x) + \mathfrak{b}  (x) \mathfrak{s} (x)\mathfrak{u}^{\prime  } (x)  -  \la   \mathfrak{u}  (x) + 1 \le  0, \quad x \in I.
$$
Simple stochastic calculus shows that the process $M(\cdot)$, defined in \eqref{E M}, is now a supermartingale, so
\begin{align*}
	\mathfrak{u}(\xi)    &
	\geq \E_\xi\left[\exp(-\lambda S_n)  \left(\mathfrak{u}(X(S_n)) - \frac{1}{\lambda}\right)\right] + \frac{1}{\lambda} 
	\geq \E_\xi \left[\int_0^\infty  \exp(-\la t)  \1_{ \{ S_n >t \} }  \dx t\right]     \\ 	&
	= \int_0^\infty \exp(-\la t) \Prob_\xi( S_n >t )  \dx t.
\end{align*}
Letting $n$ tend to infinity, we conclude that $$\, \mathfrak{u} (\xi)  \ge \int_0^\infty  \exp(-\la t) \Prob( S  >t ) \dx t =  \widehat{U}_\la (\xi)\,$$  holds for any given initial position $ \xi \in I$; the first claim follows. The second is   a consequence of Proposition \ref{Prop4}.
\end{proof}

By analogy with the situation in Subsection~\ref{SS:pde}, in the absence of explosions  the second-order equation  \eqref{u} actually has a unique bounded solution.

\begin{prop}{\bf Unique bounded solution.}   \label{C:uniqueODE}
  Assume that the function $\,\widehat{{\cal U}}_\la (  \cdot) \equiv 1/\la \,$ is the minimal nonnegative classical solution of the second-order equation \eqref{u}, and let   $\,\mathfrak{u}(\cdot)\,$ be any bounded classical solution of   \eqref{u}. Then we have $\,\mathfrak{u}(\cdot) \equiv \,\widehat{{\cal U}}_\la (\cdot)\,$.  
 \end{prop}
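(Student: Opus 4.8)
The plan is to follow closely the argument of Proposition~\ref{C:uniquePDE}, with one structural twist forced by the fact that the ordinary differential equation \eqref{u} is \emph{inhomogeneous}: the term $+1$ means one may not simply rescale or subtract solutions, but one \emph{may} form affine combinations. Concretely, since the constant function $1/\la$ solves \eqref{u} and the equation is linear, the solution set of \eqref{u} is an affine space with base point $1/\la$; hence for every $\alpha \in \R$ the function $\alpha\,\mathfrak{u}(\cdot) + (1-\alpha)/\la$ is again a classical solution of \eqref{u}. This observation replaces the use of linearity in the proof of Proposition~\ref{C:uniquePDE}, where the Cauchy problem of \eqref{pdeU}, \eqref{icU} is homogeneous in $\mathcal{U}$ apart from its initial datum, and the role played there by the constant $1$ is here played by $1/\la$.

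First I would show that every bounded classical solution $\mathfrak{u}(\cdot)$ of \eqref{u} satisfies $\mathfrak{u}(\cdot) \ge 1/\la$ on $I$. If $|\mathfrak{u}(\cdot)| \le K$ on $I$ for some $K \in (0,\infty)$, then with $\alpha = 1/(1+\la K)$ the function $\widehat{\mathfrak{u}}(\cdot) := \alpha\,\mathfrak{u}(\cdot) + (1-\alpha)/\la = \big(\mathfrak{u}(\cdot)+K\big)/(1+\la K)$ is a \emph{nonnegative} classical solution of \eqref{u}; since by hypothesis $\widehat{U}_\la(\cdot) \equiv 1/\la$ is the minimal nonnegative classical solution of \eqref{u}, we obtain $\widehat{\mathfrak{u}}(\cdot) \ge 1/\la$, that is, $\mathfrak{u}(\cdot) \ge 1/\la$.

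Next, arguing by contradiction, I would assume that the given bounded classical solution $\mathfrak{u}(\cdot)$ is not identically equal to $1/\la$. By the previous step $K_2 := \sup_{x \in I} \mathfrak{u}(x)$ is finite and strictly larger than $1/\la$. Setting $\alpha := -1/(\la K_2 - 1) < 0$, the function $\widetilde{\mathfrak{u}}(\cdot) := \alpha\,\mathfrak{u}(\cdot) + (1-\alpha)/\la = (1/\la) + \alpha\big(\mathfrak{u}(\cdot) - 1/\la\big)$ is once more a classical solution of \eqref{u}; because $0 \le \mathfrak{u}(\cdot) - 1/\la \le K_2 - 1/\la$ and $\alpha(K_2 - 1/\la) = -1/\la$, it takes values in $[0, 1/\la]$, and it is not identically $1/\la$ since $\alpha \ne 0$ and $\mathfrak{u}(\cdot) \not\equiv 1/\la$. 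Thus $\widetilde{\mathfrak{u}}(\cdot)$ is a nonnegative classical solution of \eqref{u} that is strictly below $1/\la$ at some point of $I$, contradicting the minimality of $\widehat{U}_\la(\cdot) \equiv 1/\la$. Hence $\mathfrak{u}(\cdot) \equiv 1/\la \equiv \widehat{U}_\la(\cdot)$.

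The computations are elementary, so there is no genuine analytic obstacle here; the only point requiring care is the bookkeeping with the inhomogeneous term, i.e.\ verifying at each step that $\widehat{\mathfrak{u}}(\cdot)$ and $\widetilde{\mathfrak{u}}(\cdot)$ indeed still solve \eqref{u} rather than merely its homogeneous part. This is precisely why affine (rather than linear) combinations with the constant solution $1/\la$ are the correct objects, and it is the one place where the present argument genuinely departs from the proof of Proposition~\ref{C:uniquePDE}.
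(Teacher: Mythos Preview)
Your proof is correct and is precisely the adaptation the paper has in mind when it says ``in exactly the same manner as Proposition~\ref{C:uniquePDE}'': the affine combinations $\alpha\,\mathfrak{u}+(1-\alpha)/\la$ play here the role that the linear combinations $(V+K)/(1+K)$ and $(K_2-\widehat V)/(K_2-1)$ play there, with $1/\la$ replacing the constant $1$. Your explicit bookkeeping with the inhomogeneous term is exactly the point that the paper leaves implicit.
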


\begin{proof}
This result can be proved in exactly the same manner as Proposition~\ref{C:uniquePDE}.
\end{proof}

Again, it is not possible to remove the boundedness assumption in Proposition~\ref{C:uniqueODE}. For instance, with $I=\R$, $\mathfrak{ s}(\cdot) \equiv \sqrt{2\,}$, $\mathfrak{b}(\cdot) \equiv 0$, and $\lambda =1$ in \eqref{u}, the (smallest nonnegative and) unique bounded solution of the second-order equation \eqref{u} is of course $\widehat{\mathcal{U}}_\la (\cdot) \equiv 1$,  but there is a host of unbounded solutions $\mathfrak{u}_{\kappa, \vartheta} (x) = 1 +  \kappa \exp(x)   +  \vartheta \exp(-x)$ for all $\kappa, \nu \in \R$ (nonnegative, as long as $\kappa \ge 0\,,~ \vartheta \ge 0$).

\smallskip
Finally, we remark that solving the ordinary differential equation in \eqref{u2} has been the standard way to compute Laplace transformations of the distributions of hitting times, a computation that  is a special case of the computation of the distributions of explosion times; see for example \citet{Barndorff_1978}, \citet{Kent_1978}, or \citet{Salminen_Vallois}.

\subsection{Equivalent formulations for the Feller test}  \label{SS:equivalent}

The next theorem summarizes several of our previous observations; it amounts to an extended version of the celebrated Feller test for explosions, which we recall in Appendix~\ref{FT}.  
\begin{thm}{\bf Characterization of explosions.} 
\label{Thm2}
The following conditions are equivalent:
\begin{itemize}
	\item[(i)] the diffusion process $X(\cdot)$ of \eqref{1} has no explosions, i.e., $\,\Prob (S = \infty) = 1\,; $
	\item[(ii)] $w(\ell +) = w(r-) = \infty\,$ hold  for the ``Feller test" function defined in \eqref{E v}
for some $c \in (\ell, r)$.
\end{itemize}

If the function $\mathfrak{f}(\cdot)$ of \eqref{f} is locally square-integrable on $I$, then conditions (i)-(ii) are equivalent to:
\begin{itemize}
	\item[(iii)] the  truncated exponential $\,\Prob^o-$supermartingale
\begin{equation*}
Z^\flat(T)= \exp \left( \int_0^T \mathfrak{b} ( X^o (t) )  \dx W^o (t) - {1 \over \,2\,} \int_0^T \mathfrak{b}^2 ( X^o (t) )  \dx t  \right) \cdot \1_{ \{ S^o > T\} }, \qquad 0 \le T < \infty,
\end{equation*}
appearing in \eqref{16},
is a $\,\Prob^o$-martingale.
\end{itemize}
If the functions $\mathfrak{s}(\cdot)$ and  $\mathfrak{b}(\cdot)$ are continuous on $I$, conditions (i)--(iii) are equivalent to:
\begin{itemize}
	\item[(iv)]  the smallest nonnegative classical solution of the second-order differential equation in  \eqref{u}  is		$\mathfrak{u}( \cdot) \equiv 1/\la\,;$
	\item[(iv)$^\prime$]  the unique   bounded classical solution of the   equation in  \eqref{u}  is		$\,\mathfrak{u}( \cdot) \equiv 1/\la\,$.
\end{itemize}
If, in addition, the functions $\mathfrak{s}(\cdot)$ and  $\mathfrak{b}(\cdot)$ are locally uniformly {H\"older}-continuous on $I$, conditions (i)--(iv)  are equivalent to:
\begin{itemize}
	\item[(v)] the smallest nonnegative classical solution of the Cauchy problem  \eqref{pdeU}, \eqref{icU} is $ \,\mathcal{U}( \cdot, \cdot) \equiv 1\,;$ 
	\item[(v)$^\prime$] the unique   bounded classical solution of the Cauchy problem  \eqref{pdeU}, \eqref{icU} is $ \,\mathcal{U}( \cdot, \cdot) \equiv 1$.
	\end{itemize}
\end{thm}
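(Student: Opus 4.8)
The plan is to prove the equivalences in three blocks, drawing on the tools assembled in Sections~\ref{S transformation} and~\ref{S:analytic}. The equivalence (i)$\,\Leftrightarrow\,$(ii) is nothing but Feller's test for the function $v(\cdot)$ of \eqref{E v}, recalled in Section~\ref{S:essentials}. For (i)$\,\Leftrightarrow\,$(iii) (under the standing assumption of this part, that $\mathfrak{f}(\cdot)$ is locally square-integrable), I would apply the Generalized Girsanov Theorem~\ref{Thm1} with $\Delta = C([0,T])$, which yields the identity \eqref{16}, namely $\Prob_\xi(S>T) = \E^o\!\left[Z^\flat(T)\right]$ for every $T \in (0,\infty)$. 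Since $Z^\flat(\cdot)$ is a nonnegative $\Prob^o$-supermartingale with $Z^\flat(0)=1$, it is a $\Prob^o$-martingale if and only if the map $T \mapsto \E^o[Z^\flat(T)]$ is constant and equal to $1$, i.e., if and only if $\Prob_\xi(S>T)=1$ for all $T>0$; because $\{S=\infty\} = \bigcap_{T>0}\{S>T\}$, this is in turn equivalent to $\Prob_\xi(S=\infty)=1$.

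Next I would handle the block (i)$\,\Leftrightarrow\,$(iv)$\,\Leftrightarrow\,$(iv)$^\prime$, under the assumed continuity of $\mathfrak{s}(\cdot)$ and $\mathfrak{b}(\cdot)$. By Proposition~\ref{Prop4}, the resolvent $\widehat{U}_\la(\cdot)$ is a classical solution of the second-order equation \eqref{u}; by Proposition~\ref{Prop5} it is dominated by every nonnegative classical supersolution of \eqref{u}; hence $\widehat{U}_\la(\cdot)$ is the \emph{smallest} nonnegative classical solution of \eqref{u}. Combining this with the representation $\widehat{U}_\la(\xi) = \la^{-1}\big(1-\E_\xi[\exp(-\la S)]\big)$ from \eqref{hatU}, one sees that $\Prob_\xi(S=\infty)=1$ holds if and only if $\widehat{U}_\la(\cdot)\equiv 1/\la$, which in turn holds if and only if the smallest nonnegative classical solution of \eqref{u} equals $1/\la$; this is (i)$\,\Leftrightarrow\,$(iv). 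Since $\widehat{U}_\la(\cdot)$ takes values in $[0,1/\la]$ it is bounded, so under (iv) Proposition~\ref{C:uniqueODE} yields that the bounded classical solution is unique and equal to $1/\la$, i.e., (iv)$^\prime$; conversely, under (iv)$^\prime$ the bounded function $\widehat{U}_\la(\cdot)$ must be $1/\la$, and being the smallest nonnegative classical solution this gives (iv).

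The block (i)$\,\Leftrightarrow\,$(v)$\,\Leftrightarrow\,$(v)$^\prime$ then repeats this argument with time restored. Under the local uniform H\"older continuity of $\mathfrak{s}(\cdot)$ and $\mathfrak{b}(\cdot)$, Proposition~\ref{Prop3} shows that $U(\cdot,\cdot)$ solves the Cauchy problem \eqref{pdeU},~\eqref{icU}, while Proposition~\ref{Prop1} shows that $U(\cdot,\cdot)$ is dominated by every nonnegative classical supersolution of \eqref{pdeU}; hence $U(\cdot,\cdot)$ is the smallest nonnegative classical solution of \eqref{pdeU},~\eqref{icU}. Since $U(T,\xi)=\Prob_\xi(S>T)$, we get $\Prob_\xi(S=\infty)=1$ if and only if $U(\cdot,\cdot)\equiv 1$, if and only if the smallest nonnegative classical solution of \eqref{pdeU},~\eqref{icU} is $\equiv 1$, which is (i)$\,\Leftrightarrow\,$(v); and since $U(\cdot,\cdot)$ has values in $[0,1]$ it is bounded, so Proposition~\ref{C:uniquePDE} yields (v)$\,\Leftrightarrow\,$(v)$^\prime$ exactly as in the ODE case. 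Throughout, I would invoke the standard Feller dichotomy --- $\Prob_\xi(S=\infty)=1$ holds either for every $\xi\in I$ or for no $\xi\in I$ --- so that the pointwise statement (i) lines up with the function-valued statements (iv)--(v)$^\prime$.

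Once Theorem~\ref{Thm1}, the minimality Propositions~\ref{Prop1} and~\ref{Prop5}, and the uniqueness Propositions~\ref{C:uniquePDE} and~\ref{C:uniqueODE} are in hand, the argument is essentially bookkeeping; the step that requires the most care is the upgrade from ``smallest nonnegative classical \emph{supersolution}'' to ``smallest nonnegative classical \emph{solution}'', which crucially uses that, under the respective regularity hypotheses, $\widehat{U}_\la(\cdot)$ (resp.\ $U(\cdot,\cdot)$) is \emph{itself} a classical solution, by Proposition~\ref{Prop4} (resp.\ Proposition~\ref{Prop3}). The only other subtlety is the characterization of the martingale property in (iii) via constancy of $\E^o[Z^\flat(T)]$, for which one uses the elementary fact that a nonnegative supermartingale with constant expectation is a martingale.
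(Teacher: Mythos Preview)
Your proposal is correct and follows essentially the same route as the paper's proof: both invoke Feller's test for (i)$\Leftrightarrow$(ii), Theorem~\ref{Thm1} for (i)$\Leftrightarrow$(iii), Propositions~\ref{Prop4}--\ref{Prop5} for (i)$\Leftrightarrow$(iv), Propositions~\ref{Prop3} and~\ref{Prop1} for (i)$\Leftrightarrow$(v), and Propositions~\ref{C:uniqueODE} and~\ref{C:uniquePDE} for the primed equivalences. The paper's version is terser, but your additional details (the supermartingale-with-constant-expectation argument for (iii), and the explicit use of boundedness of $\widehat{U}_\la$ and $U$ to deduce the primed statements) are exactly the unpacking one would supply.
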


\begin{proof}
The equivalence of (i) and (ii) is the subject of the {Feller}  test for explosions \citep[see for example Theorem~5.5.29 in][]{KS1}. The equivalence of (i) and  (iii) follows  from Theorem~\ref{Thm1}.
Under the stated conditions, the  equivalence of (i) and (iv) is covered by Propositions~\ref{Prop4} and \ref{Prop5}; whereas the equivalence of (i) and (v)   is covered by Propositions~\ref{Prop3} and \ref{Prop1}.   The equivalence of (iv) and (iv)$^\prime$ (resp., (v) and (v)$^\prime$) is the subject of Proposition~\ref{C:uniqueODE} (resp., Proposition~\ref{C:uniquePDE}).
\end{proof}

It would be of some  interest  to have a more ``circular" proof of this result, in particular, a direct derivation of the {Feller} test (ii) from one of the minimality  properties (iv), (v).

\section{Examples}\label{S:examples}

 Let us consider some illustrative examples. In several of these examples, we will rely on the Lamperti transformation, reviewed in Appendix~\ref{Doss}.

\begin{exmp}{\bf Reciprocal of Brownian motion.} 
 \label{Ex1} 
Let us take $I = (0, \infty)$ and 
\begin{equation}
\label{20}
\mathfrak{s} (x) = -x^2, \qquad  \mathfrak{ b}(x)= -x,
\end{equation}
implying $\,\mathfrak{ f} (x) = 1 / x$. With a given initial condition $ \xi \in (0, \infty)$, the driftless equation of \eqref{13} becomes  
$$
  X^o (\cdot)= \xi - \int_0^{ \cdot} \big( X^o (t)\big)^2\,  \dx W^o (t),
$$
and is easily seen to take values in $ I = (0, \infty)$ for all times, as it is identified with the reciprocal $X^o (\cdot) = 1 / R(\cdot)$ of the three-dimensional {Bessel} process 
\begin{equation*}
\dx R (t)=  { 1 \over  R(t)} \dx t + \dx W^o(t), \qquad R(0) = { 1 \over \, \xi \,}\,.
\end{equation*}
In particular, the condition in \eqref{DS0} is satisfied, and we have $\Prob^o (S^o = \infty) =1$. It has been known since the work of \citet{JohnsonHelms} that $X^o(\cdot)$ is a strict local martingale; and   the connection with the {Bessel} process allows the computation of the distribution of $ X^o (T)$ as in \citet{ET}, namely
\begin{equation*}
\Prob ( X^o (T) \in \dx y)  ={ \xi  \over   y^{3} \sqrt{ 2   \pi   T} }\left( \exp \left(- \frac{( (1/y) - (1/\xi) )^2}{2T} \right) - \exp \left( - \frac{( (1/y) + (1/\xi) )^2}{2T} \right)\right).
\end{equation*}

On the other hand, with the choices of \eqref{20}, the   equation of \eqref{1} becomes
\begin{equation}
\label{19}
 X  (\cdot) =\xi - \int_0^{ \cdot}  X^2  (t) \dx W (t) + \int_0^{ \cdot} X^3  (t) \dx t\,,
\end{equation}
whereas the functions of Subsection~\ref{SS feynman} take the form $F(x) = \log (x)$  and $V(\cdot) \equiv 0$. The condition  in \eqref{DS} of the Feller test  clearly fails in this case, so we have $\, \Prob (S= \infty) <1\,$; in fact, it follows   from \eqref{17} that 
\begin{equation}
\label{19a}
\Prob ( S>T)\,=\, { 1 \over  \,\xi \,} \cdot \E^o \left[{ 1 \over  R(T)} \right]\,=\,2 \int_0^{1 / ( \xi \sqrt{T})}  { 1 \over    \sqrt{2  \pi}} \exp\left( - { r^2 \over 2} \right)  \dx r\,=:\,U (T, \xi) 
\end{equation}
holds for $\,0 < T < \infty\,$, so in fact $\, \Prob (S= \infty) =0\,$. This is also quite straightforward to check directly, as follows: we  observe that \eqref{19} is of the special   
form 
\begin{equation}
\label{22}
\dx X(t) = \mathfrak{s} ( X(t) ) \left(  \dx W (t) + { 1 \over \,2\,} \, \mathfrak{s}' ( X(t) )\, \dx t\right), \qquad X(0) = \xi\,,
\end{equation}
or equivalently the function $\mathfrak b(\cdot)$ is of the form \eqref{30} with $\mub  = 0$.   Following the procedure of Appendix~\ref{Doss}, we see that \eqref{19}  can be solved ``pathwise" in terms of the function $\vartheta_\xi (w ) = ( w + (1 / \xi) )^{-1}$, namely as
$$
X(t) ={ 1 \over W(t)+  (1 / \xi) }\,, \qquad 0 \le t < S\,.
$$
Thus, the explosion time $S$ is the first hitting time of the level $ (-1 / \xi)$ by a standard Brownian motion started at the origin, that is, the right-hand side of \eqref{19a}. Note that the process explodes at $S = \lim_{n \uparrow \infty} S_n'$ where $S_n' = \inf \{ t \ge 0: X(t) \ge n\}$; the left endpoint $\ell =0$ of the state space is inaccessible by $X(\cdot)$. This is consistent with the observation made in Proposition~\ref{P conditions}(i) since $\int_0^1 \mathfrak{s}^{-1}(z) \dx z = \infty$ and $\int_1^\infty \mathfrak{s}^{-1}(z) \dx z = 1 < \infty$.

Here, it is easy to verify ``by hand" that the function $ U (\cdot \,, \cdot)$,  defined in \eqref{19a}, satisfies both the linear parabolic equation of \eqref{pdeU}, now in the form
$$
{ \partial \mathcal{U}  \over \partial \tau} (\tau, x) ={  x^4\over 2} {  \partial^2 \mathcal{U}  \over \partial x^2} (\tau, x)  +   x^3 {  \partial \mathcal{U}  \over \partial x} (\tau, x), \qquad (\tau, x) \in (0, \infty) \times I,
$$
and the initial condition $\mathcal{U} (0+, x) \equiv 1$ for all $x \in I$. (It also satisfies  the lateral condition $ \mathcal{U} ( T, 0+) \equiv 1$ for all $T\in (0, \infty) $ but this is immaterial, as the left endpoint $ \ell =0$ of the state space is inaccessible.) The function of \eqref{19a} is not the only classical solution of this Cauchy   problem, as $ \,\mathcal{U} (\cdot, \cdot) \equiv 1\,$ is clearly a solution; from Proposition~\ref{Prop1}, however, $U(\cdot, \cdot)$ is its {\it smallest nonnegative classical solution. }

 Let us consider next the second-order ordinary differential equation in \eqref{u},   written here as
\begin{align} \label{eq: uInEx1}
	\frac{x^4}{2} \mathfrak{u}^{\prime \prime} (x) + x^3 \mathfrak{u}^{\prime  } (x)  -  \la  \mathfrak{u}  (x) + 1 = 0\,, \qquad x \in I.
\end{align}
It is easy to see that a general solution of this differential equation takes the form
\begin{align*}
	\mathfrak{u}(x) = A \exp\left(- \frac{2 \lambda}{x}\right) +B \exp\left(\frac{2 \lambda}{x}\right) + \frac{1}{\lambda}\,, \qquad x \in I
\end{align*}
for some real constants $A,B  $. We are interested in the smallest nonnegative solution $\widehat{U}_\lambda(\cdot)$ in \eqref{eq: uInEx1}, which we obtain by first setting $B=0$ and then $A = -1/\lambda$. We thus have $$\widehat{U}_\lambda(\cdot) = \frac{1}{\, \lambda \,} \left(1 - \exp\left(- \frac{2 \lambda}{x}\right)\right),$$
which is clearly smaller than the constant   $1/\lambda$.  This illustrates the validity of the characterization  of an explosive diffusion in (v) of Theorem~\ref{Thm2}.
\qed
\end{exmp}

 \begin{exmp} \label{ex:Bessel}  {\bf Stochastic equations of Bessel-type.}  With a given constant   $ \delta \in (-\infty, 2)\,$ and state space $I = (0, \infty)$, let us consider the stochastic equation 
\begin{equation}
\label{BessEq}
 \dx X(t) = { \delta - 1 \over 2 X(t)} \dx t + \dx W(t),\qquad X(0) = \xi\,.
\end{equation}
The solution of this equation does not explode to infinity, but reaches the origin in finite time: $ \Prob (S < \infty) = 1$, where $S = \lim_{n \uparrow \infty} S_n'$ with $S_n' = \inf\{ t \ge 0: X(t) \le 1 / n\}$. When $\, \delta \in (1,2)\,$, this corresponds to a   Bessel  process, with ``dimension parameter" $ \,\delta \, $  and absorption at the origin. 

In our notation $\mathfrak{s} (\cdot) \equiv 1$ and, with index $\,\nu = 1 - (\delta/2) > 0$, we have   
$$
 	\mathfrak{f}(x) =  \frac{1/2-\nu}{x}\,, \qquad F(x) = \log \left( x^{  1/2-\nu }\right),\qquad V(x) = { \nu^2 - 1/4 \over 2 x^2}
$$ 
in the notation of Subsection~\ref{SS feynman}.
The representation in \eqref{17} then helps us compute the distribution of $S$ as the expectation of a  functional of the Brownian motion $X^o (t) = \xi + W(t)$, $0 \le t < S^o$ with $ S^o = \inf\{ t \ge 0: \xi + W(t) =0\}$; to wit, 
\begin{align}
\Prob ( S>T) &= \E^o \left[\left( \frac{X^o (T)}{\xi}\right)^{-2\nu} \cdot \left(\frac{X^o (T)}{\xi}\right)^{\nu+1/2}  \exp \left(   \frac{\,1/4-\nu^2\,}{2} \int_0^T { \dx t \over ( X^o (t) )^2} \right) \cdot \1_{ \{ S^o >T\}}   \right]         \nonumber   \\
	&= \E^{\Q^\nu} \left[\left( \frac{X^o (T)}{\xi}\right)^{-2\nu} \right]        \label{30before}    \\
&=  \frac{1}{\,T\,}\,  \xi^\nu \exp\left(\frac{-\xi^2}{2T} \right) \int_0^\infty
x^{1-\nu}  \exp\left(\frac{ - x^2}{2T}\right) I_\nu \left( \frac{\xi x}{ T} \right) \dx x \,.  \label{30a}
\end{align}
Here,  $\Q^\nu$ is the probability measure under which the process $\,X^o(\cdot)\,$ is   Bessel   of dimension $\,2\nu+2 = 4-\delta>2\,$; the change of measure is proved in Exercise~XI.1.22, and the density of the process $X^o(\cdot)$ is  derived on page~446, of \citet{RY}.   
 In \eqref{30a} and  throughout the paper, we denote by $\,I_\nu(\cdot)\,$ the modified {Bessel} function of the second type, namely 
\begin{equation}
\label{ModBes}
I_\nu (u)  \,  :=  \sum_{n \in \N_0} \frac{ ( u /2)^{\nu + 2 n}}{\, n! \, \Gamma ( n + \nu +1)\,} .
\end{equation}

Now with the help of the monotone convergence theorem and of the substitutions $z = x/\sqrt{2T}$ and $y = \xi^2/(2T)$, the expression in \eqref{30a} simplifies   to 
\begin{align*}
\Prob ( S>T) &= 2  y^{\nu/2} \exp\left(-y \right) \int_0^\infty
z^{1-\nu}  \exp\left( - z^2\right) I_\nu \left( 2 z \sqrt{y} \right) \dx z \\
&= 2  y^{\nu/2} \exp\left(-y \right) \sum_{k = 0}^\infty\, \frac{y^{\nu/2+k}}{\,k! \, \Gamma(\nu+k+1)\,} \int_0^\infty
z^{1+2k}  \exp\left( - z^2\right) \dx z   \\
&=\exp\left(-y \right) \sum_{k = 0}^\infty \frac{y^{ \nu+k}}{\Gamma(\nu+k+1)}  \\
&= \frac{1}{\Gamma(\nu)} \int_0^{y} \exp  \left(-\theta \right) \theta^{\nu-1} \dx \theta =  \frac{1}{\Gamma(\nu)} \int_0^{\, \xi^2/(2T)} \exp  \left(-\theta \right) \theta^{\nu-1} \dx \theta   \,.
\end{align*}
Thus, we have
\begin{align} 
\label{GH}
\Prob ( S>T)=\Prob \left( \mathfrak{G} < { \xi^2 \over  2  T } \right) =H_\nu \left(   { \xi^2 \over 2  T } \right),\quad \hbox{where} \quad H_\nu (u) := \int_0^u \,{  t^{\nu-1} \exp(-t) \over \Gamma (\nu)}  \dx t
\end{align}
is the cumulative Gamma($\nu$) probability distribution function, and the random variable $\mathfrak{G}$ has Gamma distribution with parameter $\, \nu\,$.

Of course, it is well known from the time-reversal considerations in Section~2.1 of \citet{Going_Yor} -- based on the results in \citet{Getoor_1979} and \citet{Pitman_Yor_1981} -- that $S$ has the distribution of $\xi^2 / (2 \mathfrak{G})$; see also Section~14 in \citet{Kent_1982}. Here we just derived this fact from  rather elementary Bessel process computations, with no need for time-reversal.  Alternatively, Proposition~6.1 in \citet{Yor_2001} yields the representation of the explosion time $S$ as in \eqref{GH} by using the identity in \eqref{30before} but now via a representation  of negative powers as an integral of exponentials.

 \smallskip
 It can be checked ``by hand" that the  function $ (T, \xi) \mapsto \Prob_{\xi} ( S>T)  =H_\nu \left(     \xi^2 /( 2  T) \right)$ in \eqref{GH} is a classical solution of  the  linear parabolic equation
 $$
 {  \partial \mathcal{U}  \over  \partial T} (T, \xi) ={ 1\over \,2\,} { \partial^2 \mathcal{U}  \over \partial \xi^2} (T, \xi)  +   {\, \delta - 1\, \over 2 \,\xi\,} \, {  \partial\mathcal{U}  \over \partial \xi} (T, \xi), \qquad (T, \xi) \in (0, \infty) \times I,
 $$
 and satisfies also the initial condition $ \,\mathcal{U} (0+, \xi) = 1$ for  $\xi\in I$  $($as well as   the lateral condition $\, \mathcal{U} (T, 0+ ) = 0$ for  $T\in (0, \infty))$.  From Proposition \ref{Prop1}, this function is the {\it smallest nonnegative classical solution} of the initial / boundary value problem under consideration. \qed
   \end{exmp}

\begin{rem}{\it A generalization of Example~\ref{ex:Bessel}.} 
	\citet{Barndorff_1978} construct, for a given set of parameters, a one-dimensional diffusion $X(\cdot)$ on the interval $I=(0,\infty)$ such that the corresponding time to explosion $S$ has   a generalized Gamma distribution.  If the parameters are chosen so that the generalized Gamma distribution is exactly a Gamma distribution, then their construction yields the Bessel process of  	Example~\ref{ex:Bessel}.
\qed
\end{rem}

\begin{exmp} {\bf   A generalization of Example~\ref{Ex1}, and $\mathbf{ h}$-transforms.}  \label{Ex h-transform}
The special case $I = (0, \infty)$ and $\,\mathfrak{f} (x) = 1 / x$, or equivalently $\mathfrak{ b} (x) = \mathfrak{s} (x) / x$, corresponds exactly to the situation in which the diffusion process $X(\cdot)$ of (\ref{1}) is the $h$-transform of the nonnegative local martingale and diffusion  in natural scale $X^o (\cdot)$ of \eqref{13};   see for example \citet{Doob_1957} or Section~3.3 in \citet{Perkowski_Ruf}.  We observe that Example~\ref{Ex1} is a special case of this setup. The above choices lead to $ F(x) = \log (x)$  and $V(\cdot) \equiv 0$ in the notation of Subsection~\ref{SS feynman}, and  \eqref{17} becomes 
\begin{equation}
\label{17a}
\Prob ( S>T )=  { 1 \over \,\xi\,} \,\E^o\left[ X^o (T) \cdot \1_{ \{ S^o >T\} }\right]  = { 1 \over \,\xi \,}\, \E ^o\big[\,X^o (T  \wedge S^o ) \,\big], \qquad 0 \le T < \infty \,.
\end{equation}
This can be seen from first principles, as $X^o(\cdot)$ is a nonnegative local martingale, which gets absorbed at the origin the first time it reaches it. 
Consequently, we have $\Prob (S = \infty) =1$ if and only if the stopped diffusion in natural scale  $X^o (\cdot)$ of \eqref{13} is a   martingale. 

Let us recall from  Proposition~\ref{P conditions}(iii) that we have $w(\infty ) = \infty$ and $\,\Prob(S=\infty) = 1\,$, if and only if the condition in \eqref{DS} holds.
That is, $X^o (\cdot)$ is a martingale, if and only if the condition in  \eqref{DS} holds; see also \citet{DShir}.
Alternatively, we observe that the process $Y(\cdot) = 1/X(\cdot)$ satisfies the equation
\begin{equation*}
 Y  (\cdot) = \frac{1}{\,\xi\,} - \int_0^{ \cdot}    Y^2(t) \cdot  \mathfrak{s}\left(\frac{1}{Y(t)}\right)  \dx W (t)
\end{equation*}
and reaches the origin if and only if 
\begin{equation*}
\int_{0}^1  {  z \over \,z^4 \, \mathfrak{s}^2 (1/z) \,} \,\dx z = \infty\,, 
\end{equation*}
by virtue of \eqref{DS0}. This, however, is again equivalent to \eqref{DS}.

The equation for $Y(\cdot)$ can be solved by the familiar method of time-changing a standard Brownian motion $ B(\cdot)$, namely $Y  (t) ={1/\xi} +  B( A ( t) )$; here 
$$
A ( t) = \int_0^{ t} h^2 ( Y (s) )\, \dx s\,, \qquad  0 \le t < \infty\,,
$$
with $\,h(y) := y^2 \,\mathfrak{s}(1/y)\,$, is the inverse of the continuous, strictly increasing, real-valued  function 
$$
\Gamma (u) :=\int_0^u  h^{-2} \left(    {1 \over \,\xi \,}  +  B ( v) \right)  \dx v, \qquad 0 \le u < \infty 
$$
with $ \Gamma(\infty)=\infty$. The explosion time $S$ of $X(\cdot)$ is thus related to the first hitting time 
$$
\taub :=\inf \left\{ u \ge 0: B(u) =-{1 \over \, \xi \,}\right\}
$$
for the auxiliary Brownian motion $B(\cdot)$ via
\begin{equation}
\label{28}
\Prob ( S>T)  =  \Prob \big(\taub > A (T)\big) = \Prob \big( \Gamma( \taub ) >T\big).
\end{equation}

We now consider the special case $\mathfrak{s}(x) = \kappa x^p$ for all $x > 0$ for some real numbers  $\,\kappa>0\,, \,p >0\,$.  First, by \eqref{DS0}, we observe that $X^o(\cdot)$ is a martingale and $\Prob(S=\infty) =1$, if and only if $p\leq 1$, that is, the function $\mathfrak{s}(\cdot)$ grows at most linearly.  We note that in the case $\,p = 1/2\,$ we have 
 \begin{equation}  
 \label{24}
 \dx X  (t) = \kappa^2 \dx t + \kappa \sqrt{X  (t)\,}   \,  \dx W(t), \qquad X  (0)= \xi. 
  \end{equation}  
  In particular, with $\kappa = 2$ we see that $X(\cdot)$ is the square of a {Bessel} process in dimension $\delta = 4$. 

For the  case $\,p>1=\kappa\,$ we would like to compute the distribution of the random variable 
\begin{equation}
\label{GXM}
\Gamma(\taub) =\int_0^\taub \big( x + B(t) \big)^{\mu}  \dx t  \,, \quad \text{where we have set}~~~ x = { 1 \over \,\xi\,}\,, ~~~ \mu = 2p-4\,,
  \end{equation} 
and thus the distribution of the explosion time $S$ via \eqref{28}. Let us consider two special cases first.

(i)~ For the value $\,p = 2$, we have $h(\cdot) \equiv 1$, $ \Gamma(u) \equiv u$ and $ \Prob ( S>T)  = \Prob (  \taub   >T)$ is given by   \eqref{19a}. 

(ii) For the value $\,p = 3/2$,   \citet{Borodin_handbook}  provide in Formula~(2.19.2) on page~208 the distribution of the random variable 
$$
\Gamma(\taub) = \int_0^\taub { \dx v \over  \xi^{-1} + B(v)} \qquad \mathrm{as} \qquad \Prob \big( \Gamma (\taub ) \in \dx t \big)= \frac{2}{\,\xi  t^{2}\,} \exp \,\left(-  { 2 \over \, \xi t \,} \right)\dx t
$$ 
or equivalently the distribution of the explosion time 
\begin{equation}
\label{28a}
 \Prob ( S>T) = \Prob ( \Gamma( \taub ) >T)=1 - \exp \left(-  { 2 \over  \xi  T} \right)= { 1 \over  \,\xi\,} \cdot \E^o \left[ X^o(T)  \right], \qquad 0 < T < \infty 
\end{equation}
for the diffusion
\begin{align*}
 X  (\cdot) &= \xi + \int_0^{ \cdot}  \big( X(t) \big)^{3/2} \dx W (t) + \int_0^{ \cdot}  \big(X  (t) \big)^2 \dx t\,,
\end{align*}
thus
$\,  X^o  (\cdot)  = \xi + \int_0^{ \cdot}  (X^o(t))^{3/2} \dx W^o (t)\,$; 
   a related diffusion is discussed in Example~\ref{3/2}. It is   checked ``by hand", that the function $U (T, \xi) = 1 - \exp ( -2 / (\xi T ) )$ in \eqref{28a} satisfies  the linear parabolic equation 
\begin{equation}
\label{LPE}
{  \partial \mathcal{U} \over \partial T} (T, \xi)\,=   \, \mathfrak{s}^2 (\xi)   \left( {  \, 1\, \over 2}{ \partial^2 \mathcal{U} \over \partial \xi^2} (T, \xi)  +    \frac{1}{\,\xi\,} \,{ \partial \mathcal{U}  \over \partial \xi} (T, \xi) \right)\,, \qquad (T, \xi) \in (0, \infty) \times I
\end{equation}
subject to the initial condition $\,\mathcal{U} (0+, \cdot) \equiv 1$ on $  I$, for $\,\mathfrak{s}  (\xi) = \xi^{\,3/2}\,$; from Propositions~\ref{Prop3} and \ref{Prop1}, the function $\,U\,$ of \eqref{28a} is the smallest nonnegative (super)solution of the Cauchy problem.

 In order to compute the distribution of the random variable $\, \Gamma (\taub)\,$ of \eqref{GXM} in any generality, the crucial observation, for which we are grateful to   Marc Yor, is that for any given $x>0$, $\varrho>0$ and for standard Brownian motion $B(\cdot)$,   the representation
\begin{equation}
\label{LJY}
\big( x + B(t) \big)^{2\varrho}\,=\, R \left(k \int_0^t \big( x + B(s) \big)^{\mu}\,\dx s \right), \qquad 0 \le t \leq \taub
\end{equation}
holds, where $R(\cdot)$ is a Bessel process started in $x^{2 \varrho}$ with dimension
\begin{equation}
\label{DCM}
\delta\,=\, 2 - \frac{1}{\,2\varrho\,}\,, \qquad \text{and}~~~k=4 \varrho^2\,, ~~~\mu = 2 \big( 2 \varrho -1 \big)\,, ~~~x=1/\xi\,.
\end{equation}
This is verified easily, via stochastic calculus and the dynamics in \eqref{BessEq} for the process $R(\cdot)$; consult also Proposition~XI.1.11 in \citet{RY}. However, now the representation \eqref{LJY} identifies $\,k \,\Gamma (\taub)\,$ as the first time the Bessel process $R(\cdot)$ visits the origin, and \eqref{28}, \eqref{GXM}, \eqref{DCM} give the distribution of the explosion time $S$ as 
$$
P \big( S>T) \,=\, H_\nu \left( \frac{x^{\,4\varrho}}{\,2\, k\,T\,}\right)\,=\, H_\nu \left( \frac{2\, \nu^2}{\, T \, \xi^{\,1/\nu}\,}\right), \quad \text{with}~~~\nu = 1 - \frac{\delta}{\, 2 \,} = \frac{1}{\, 4 \varrho\,}\quad \text{and}~~~\varrho = \frac{p -1}{2}>0
$$
in the notation of \eqref{GH}; see also \citet{Hammersley}, \citet{Getoor_Sharpe},  \citet{Pitman_Yor_1981}, and Proposition~1 in \citet{Khos_Salminen_Yor}.  Once again, it is checked by hand that this function solves the equation \eqref{LPE} subject to the initial condition $\,\mathcal{U} (0+, \cdot) \equiv 1$ on $  I$, for $\,\mathfrak{s}  (\xi) = \xi^{\,p}\,$, $\,p>1$; and from Propositions~\ref{Prop3}, \ref{Prop1}  that it is the smallest nonnegative (super)solution of this Cauchy problem. 
\qed
 \end{exmp}

In  Example~\ref{Ex h-transform} (cf.~\eqref{17a}), we proved the following result, which appeared under slightly stronger assumptions in \citet{DShir}.


\begin{cor}{\bf Martingale property of nonnegative diffusions in natural scale.}   
\label{C:mg property}
Suppose that    the function $1/\mathfrak{s}(\cdot)$ is locally square-integrable on $I = (0,\infty)\,$, and that $X^o(\cdot)$ is a nonnegative $\Prob^o-$local martingale  which  satisfies  the stochastic differential equation $$X^o(\cdot) \,=\, \xi + \int_0^\cdot \mathfrak{s}\big(X^o(t)\big)\, \dx W^o(t) $$ of  \eqref{13}, and becomes absorbed at the origin the first time it gets  there. 
Then  $X^o(\cdot)$ is a true $\Prob^o-$martingale,  if and only if the condition of  \eqref{DS} holds, namely $$ \int_{1}^\infty  \frac{\,z\,\dx z\,}{\, \mathfrak{s}^2 (z) \, } \, =\,\infty\,.$$ 
\end{cor}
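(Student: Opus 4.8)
The plan is to recognize that these hypotheses put us exactly in the situation of Example~\ref{Ex h-transform}, namely $I=(0,\infty)$ with $\mathfrak b(x)=\mathfrak s(x)/x$ and hence $\mathfrak f(x)=1/x$ in the notation of \eqref{f}, and then to read off the statement from the Feynman--Kac representation together with Proposition~\ref{P conditions}(iii). First I would verify the standing hypotheses of Section~\ref{S:essentials}: local square-integrability of $1/\mathfrak s(\cdot)$ gives local integrability of $1/\mathfrak s^2(\cdot)$ on $(0,\infty)$, while $|\mathfrak f(x)|=1/|x|$ is locally integrable on compact subsets of $(0,\infty)$, so condition \eqref{2} holds; moreover $\mathfrak f(\cdot)$ is itself locally square-integrable (indeed smooth) on $I$. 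Since $1/\mathfrak s^2(\cdot)$ is locally integrable, the driftless equation \eqref{13} has a weak solution unique in the sense of the probability distribution up to explosion, so the given local martingale $X^o(\cdot)$ --- being such a weak solution --- has the law of the natural-scale diffusion of Subsection~\ref{sec1.2}. As a nonnegative local martingale it is a supermartingale, so it cannot reach $+\infty$ in finite time; its only way out of $(0,\infty)$ is to hit $0$, where it is absorbed by assumption. Let $X(\cdot)$ denote the diffusion of \eqref{1} with this $\mathfrak s(\cdot)$ and $\mathfrak b(x)=\mathfrak s(x)/x$, with explosion time $S$.

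The core step is the identity $\Prob_\xi(S>T)=\tfrac1\xi\,\E^o[X^o(T\wedge S^o)]$ for every $T\in[0,\infty)$. Starting from \eqref{16}, which applies because $\mathfrak f(\cdot)$ is locally square-integrable, we have $\Prob_\xi(S>T)=\E^o[\mathcal E_T\,\mathbf 1_{\{S^o>T\}}]$ with $\mathcal E_T$ the exponential appearing there. On the event $\{S^o>T\}$ the path $X^o(\cdot)$ is bounded away from $0$ on $[0,T]$, so It\^o's formula applied to $\log X^o(\cdot)$, using $\mathfrak b(X^o)\,\dx W^o=\dx X^o/X^o$ and $\dx\langle X^o\rangle=\mathfrak s^2(X^o)\,\dx t$, gives $\log X^o(T)-\log\xi=\int_0^T\mathfrak b(X^o(t))\,\dx W^o(t)-\tfrac12\int_0^T\mathfrak b^2(X^o(t))\,\dx t$, i.e.\ $\mathcal E_T=X^o(T)/\xi$ there; equivalently, this is precisely \eqref{17a}, which follows from \eqref{17} with $F(x)=\log x$ and $V\equiv0$. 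Since $X^o(\cdot)$ is absorbed at $0$, we have $X^o(T)=0$ on $\{S^o\le T\}$ and $X^o(T)=X^o(T\wedge S^o)$ on $\{S^o>T\}$, whence $\E^o[X^o(T)\mathbf 1_{\{S^o>T\}}]=\E^o[X^o(T\wedge S^o)]$, proving the identity.

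The equivalence now follows quickly. The stopped process $X^o(\cdot\wedge S^o)$ coincides with $X^o(\cdot)$ (by absorption at $0$) and is a nonnegative local martingale, so it is a true $\Prob^o$-martingale on $[0,\infty)$ if and only if $\E^o[X^o(T\wedge S^o)]=\xi$ for all $T\ge0$; by the identity just established this is equivalent to $\Prob_\xi(S>T)=1$ for all $T$, that is, to $\Prob_\xi(S=\infty)=1$. Finally, Feller's test together with Proposition~\ref{P conditions}(iii) gives, in the present setting, that $v(0+)=\infty$ holds automatically while $v(\infty)=\infty$ holds if and only if \eqref{DS} does, so $\Prob_\xi(S=\infty)=1$ is equivalent to \eqref{DS}. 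Concatenating the equivalences yields the corollary.

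The part that will require the most care is the second step: one must check that $\{S^o>T\}$ is exactly the event on which $X^o(\cdot)$ stays strictly positive (indeed bounded away from $0$) on $[0,T]$, so that $\log X^o(\cdot)$ is a semimartingale there; that the given $X^o(\cdot)$ is, in law, the natural-scale diffusion so that \eqref{16} is applicable; and that a nonnegative local martingale cannot explode upward, so that the reduction ``exit $=$ hitting $0$, and then absorption'' is legitimate. These points are routine but essential; everything else is a direct appeal to Theorem~\ref{Thm1} (or its Feynman--Kac corollary), Feller's test, and Proposition~\ref{P conditions}(iii).
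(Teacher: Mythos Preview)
Your proposal is correct and follows essentially the same route as the paper: the paper's proof simply refers back to Example~\ref{Ex h-transform}, which is precisely the setup you reconstruct (auxiliary diffusion $X(\cdot)$ with $\mathfrak b(x)=\mathfrak s(x)/x$, the identity \eqref{17a} linking $\Prob_\xi(S>T)$ to $\tfrac1\xi\,\E^o[X^o(T\wedge S^o)]$, and Proposition~\ref{P conditions}(iii) for the Feller-test characterization). Your write-up is in fact more explicit than the paper's about verifying the standing hypothesis \eqref{2}, about why $X^o(\cdot)$ cannot explode to $+\infty$, and about justifying the It\^o computation on $\{S^o>T\}$; these are exactly the care points the paper leaves implicit.
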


\begin{exmp}{\bf Lamperti transformation for affine variance diffusions.}
 Let us consider a modification of the stochastic differential equation \eqref{24}, namely $I = (0, \infty)$ and
\begin{equation}
\label{25}
 \dx X  (t) ={ \kappa^2  \over 4} \,\dx t + \kappa \sqrt{X  (t)} \,\dx W(t), \qquad X  (0)= \xi   \in (0, \infty)
 \end{equation}
for some $\kappa>0$.
This corresponds to  $\,\mathfrak{s} (x) = \kappa \sqrt{ x\,}\,$, $\,\mathfrak{ b}(x)= \kappa / (4\sqrt{ x\,}\,)$, thus 
$$ 
\mathfrak{ f} (x) = {1   \over  4 x} , \qquad  F(x) = {1   \over \, 4 \, } \log(x),   \qquad \hbox{ and } \qquad V(x)= -{3  \kappa^2 \over \, 32  x\,}\,.
$$ 

We note that $\mathfrak{ b}(x) = \mathfrak{s}'(x) / 2$ holds,   and thus  Appendix~\ref{Doss} applies. 
The result is $ X(\cdot) = \vartheta_\xi ( W(\cdot) )$, now in terms of the function  
$$
 \vartheta_\xi (w ) = \left( \frac{\kappa}{\,2\,} \,w +   \sqrt{\xi\,} \,\right)^{2} , \quad \mathrm{namely} \quad X(t) =\left( {\kappa  \over \,2\,}\, W(t) + \sqrt{\xi\,} \,\right)^2 , \quad 0 \le t < S.
$$
It follows that $S$ is the first hitting time of the point $( -2 \sqrt{ \xi \,}/\kappa)$ by standard Brownian motion started at the origin. Thus, the process $ X(\cdot)$ explodes by hitting the left endpoint $0$ of the state space; the right endpoint $\infty$ is inaccessible by the diffusion $X(\cdot)$; that is, $X(S-) = 0$.  Of course, this is again consistent with Proposition~\ref{P conditions}(i) since $\int_0^1 \mathfrak{s}^{-1}(z) \dx z = 2 \kappa<\infty$ and $\int_1^\infty \mathfrak{s}^{-1}(z) \dx z =  \infty$.

From \eqref{17}, we conclude that
\begin{align}
\label{29a}
\Prob ( S>T) &= 
2 \int_0^{\,2 \sqrt{\xi}/ (\kappa \sqrt{T})}  { 1 \over \sqrt{2  \pi}} \exp\left( - { y^2 \over 2} \right)  \dx y\\  &= \E^o \left[\left(\frac{ X^o (T)}{\xi} \right)^{1/4} \exp \left(   {  3  \kappa^2\over 32} \int_0^T { \dx t \over X^o (t)} 
\right) \cdot \1_{ \{ S^o >T\} } \right]; \nonumber
\end{align}
 here $X^o (\cdot)$ is a martingale  and diffusion in natural scale  with
  \begin{equation*}
 \dx X^o  (t) = \kappa \sqrt{X^o  (t)} \, \dx W^o(t), \qquad X^o  (0)= \xi. 
  \end{equation*}  
   
 Once again, it is easy to check  by direct computation that the function of \eqref{29a} solves the 
 linear parabolic equation of \eqref{pdeU}, namely 
 $$
 { \partial \mathcal{U} \over \partial \tau} (\tau, x) ={ \kappa^2 x \over 2} { \partial^2 \mathcal{U}  \over \partial x^2} (\tau, x)  +   {\kappa^2 \over 4}  {  \partial \mathcal{U} \over \partial x} (\tau, x), \qquad (\tau, x) \in (0, \infty) \times I,
 $$
 subject to the initial condition $ \mathcal{U} (0+, x) = 1$ for  $x\in I$  (and to the lateral condition $ \mathcal{U} (T, 0+ ) = 0$ for  $T\in (0, \infty))$. Arguing then by analogy with Proposition~\ref{Prop1}, it is checked that the function of \eqref{29a} is the {\it smallest nonnegative classical solution of this initial / boundary value problem}.
 \qed
 \end{exmp}


\begin{exmp}{\bf Diffusion with quartic variance function.}
 \label{Tan} 
 Let us consider, for some given real number $\mub \in\R,$  the stochastic differential equation
 \begin{equation}
\label{tan}
\dx X(t) =\big( 1 + X^2 (t) \big) \big[ \,\dx W(t) +   ( \mub + X  (t) ) \,\dx t \, \big]\,, \quad X(0) = \xi   
\end{equation}
  of the form \eqref{1} with $\,\mathfrak{s} (x) = 1 + x^2\,$ and $\,\mathfrak{b} (x) = \mub +x\,$ for all $\,x \in I = \R\,.$  In this case the diffusion in natural scale  
$$
\dx X^o(t) =\left( 1 + (X^o (t))^2 \right)   \dx W^o(t)\,, \quad X^o (0) = \xi 
$$
of \eqref{13} does satisfy $\Prob^o(S^o=\infty)=1$ but does {\it not} satisfy the condition of \eqref{DS}: it is a strict local martingale, studied for example in \citet{CFR_qnv}. We have 
\begin{align*}
	 f(x) = \frac{\mub+x}{1 + x^2}\,,\qquad  F(x) = \mub \tan^{-1} (x) + \frac{\log(1+x^2)}{2}\,,\qquad   V(x) = \frac{1+\mub^2}{2}\,,
\end{align*}
 in the notation of Subsection~\ref{SS feynman}. Then  \eqref{17} and Corollary~1 and Lemma~2 in \citet{CFR_qnv}, which provide a distributional identity of the local martingale $X^o(\cdot)$ in terms of the Brownian motion $W^o (\cdot)$, applied with $$\widetilde{g}(x) = \cos(x+c)/\cos(c), \qquad \widetilde{f}(x) = \tan(x+c), \qquad \widetilde{h}(x) = (1+x^2)^{1/2} \exp(\mub \tan^{-1}(x))$$ and $C=1$, where $c = \tan^{-1} (\xi)$, yield the representation 
 \begin{align*}
\Prob (S>T) &=  \E^o \left[\left(\frac{1+(X^o (T))^2}{1+\xi^2}\right)^{1/2 } \exp \left(\mub \tan^{-1} \big(X^o (T)\big)  - \mub \tan^{-1} (\xi)  -  \frac{\,1+ \mub^2 \, }{2}\, T
\right)   \right]\\
	&=  \E^o \left[\exp \left(\mub W^o(T)   -  \frac{\mub^2 T}{2}    \right) \1_{\{\taub^o > T\}}
	   \right]
\end{align*}
with $\taub^o$ the first time that $W^o(\cdot)$ hits either
$$a =a (\xi) := -(\pi / 2) - \tan^{-1} (\xi) \qquad  \mathrm{or} \qquad b = b(\xi) := (\pi / 2) - \tan^{-1} (\xi).$$

Alternatively, we observe that the function $\mathfrak{b}(\cdot)$ is of the form   \eqref{30}, and thus,  following  Appendix~\ref{Doss}, 
 $$
 X(t) = \tan \left( W(t) + \mub  t + \tan^{-1} (\xi) \right), \qquad 0 \le t < S 
 $$
  identifies the explosion time as a first exit time for Brownian motion with drift
\begin{equation*}
 S = \inf \big\{ t \ge 0: W(t) + \mub t \notin (a, b) \big\}  
 \end{equation*}
from the open interval $(a,b)$. Formula~(3.0.2) on page~309 of \citet{Borodin_handbook} now computes the distribution of this exit time as
\begin{equation}
\label{29b}
 \Prob (S>T)=\int_T^\infty \exp\left( \frac{- \mub^2 t}{2}\right) \big(  \exp(\mub a) \Theta \left(t; b, b-a\right)+ \exp(\mub b) \Theta  \left(t; -a, b-a\right) \big) \,\dx t\,,
 \end{equation}
 where   
\begin{align*}
\Theta (t; u,v) :=  \sum_{k \in \mathbb{Z}}  {v-u + 2 k v   \over  \sqrt{ 2\pi t^3}} \exp \left( - { \left( v-u + 2 k v \right)^2 \over  2 t}  \right)
 \end{align*}
denotes a certain inverse Laplace transform.

In this example,  the explosion time $S$ has actually finite expectation. From Propositions~\ref{Prop3} and \ref{Prop1}, the function on the right-hand side of \eqref{29b} is the {\it smallest nonnegative classical solution of the Cauchy problem }
$$
{  \partial \mathcal{U}  \over \partial \tau} (\tau, x) ={   (1+x^2)^2 \over 2} {  \partial^2 \mathcal{U}  \over \partial x^2} (\tau, x)  + (\mub+x) (1+x^2) {  \partial \mathcal{U}  \over \partial x} (\tau, x), \quad (\tau, x) \in (0, \infty) \times \R
$$
with boundary condition $\,\mathcal{U} (0 , x) =1\,$ for all $ x  \in   \R$. \qed
\end{exmp}

\begin{exmp}{\bf Diffusion with cubic variance function.} 
 \label{3/2} 
 In a similar manner, we  consider the stochastic differential equation
$$
\dx X(t) =\big(   X (t) \big)^{3/2} \left [  \dx W(t) +   \left( \mub +  { 3 \over \,4\,} \big(X  (t)\big)^{1/2} \right) \dx t  \right], \qquad X(0) = \xi \in I  
$$
with state space $ I = (0, \infty)$, for some given real number $\mub \in \R$. This equation is  of the form \eqref{1} with $\mathfrak{s} (x) = x^{3/2}$, $\, \mathfrak{b} (x) = \mub  + (3/4) x^{1/2} $, and  the diffusion in natural scale  
$$
\dx X^o(t)  = \big(   X^o (t) \big)^{3/2}    \dx W^o(t), \quad X^o(0) = \xi 
$$
of \eqref{13} satisfies the conditions of \eqref{DS0}. We note that 
$$\mathfrak{f} (x) = \frac{\mub}{x^{3/2}} + \frac{3}{4x},\qquad F(x) =  \frac{\,-2\mub\,}{\,\sqrt{x\,}\,} +\log ( x^{3/4}),\qquad V(x) = \frac{\mub^2}{2} - \frac{3 x}{32}
$$
 in the notation of Subsection~\ref{SS feynman}, so \eqref{17} gives the representation
$$
\Prob_\xi (S>T)=  \exp\left(\frac{2 \mub}{\,\sqrt{\xi\,}\,}- \frac{\mub^2 T}{2}\right) \E^o \left[\left( {  X^o (T) \over \xi} \right)^{3/4}  \exp \left( { 3   \over 32} \int_0^T    X^o (T) \dx t  -\frac{2 \mub}{\,\sqrt{X^o(T)\,}\,}
\right)  \right] .
$$

This probability  can be computed explicitly via the observations in Appendix~\ref{Doss}. 
We obtain the diffusion $X(\cdot)$ explicitly as  
 $$
 X(t) = \left( { 1 \over \,\sqrt{\xi\,}\,} - {1 \over\, 2\,} \big( W(t) + \mub  t\big)    \right)^{-2}, \qquad 0 \le t < S.
 $$
 This shows  that the origin is inaccessible by the diffusion $X(\cdot)$, which can thus only explode to infinity, consistent with  Proposition~\ref{P conditions}(i). The explosion happens at the Brownian first passage time
 $$
 S = \inf \left\{ t \ge 0:  W(t) + \mub t = { 2 \over \, \sqrt{\xi\,}\,} \right\},
 $$
  whose distribution is of course well known, namely 
 $$
 \Prob_\xi (S>T)= 1-\int^T_0 \left( { 2 \over  \pi \xi  t^{3} } \right)^{1/2} \exp \left( - { 1 \over  2t} \left( { 2 \over \,\sqrt{\xi\,}\,} - \mub  t \right)^2 \right) \dx t \,=:\, U (T, \xi)\,;
 $$
see also Section~3.5.C in \citet{KS1}.
 In particular, with $\mub <0$ we have $\Prob (S < \infty) =   \exp(4 \mub / \sqrt{\xi})$; whereas, with $ \mub \ge 0$, we have $\Prob (S < \infty) =   1$. It is checked by direct computation, that the above function $U(\cdot, \cdot)$ solves the parabolic partial differential equation
$$
\frac{\partial \mathcal{U}}{ \partial \tau} (\tau, x) = { x^3 \over 2}  {  \partial^2 \mathcal{U}  \over \partial x^2} (\tau, x)  + \left( \mub  x^{3/2}+ { 3 \over \,4\,} \,x^2\right) { \partial \mathcal{U} \over \partial x} (\tau, x), \qquad  (\tau, x) \in (0, \infty) \times \R
$$
with  boundary condition $\mathcal{U} (0 , x) \equiv 1$ for all $x \in I$; indeed, from Proposition~\ref{Prop1},   $U (\cdot\,, \cdot)$ is the smallest nonnegative (super)solution of this linear parabolic equation.

We remark that a related example is discussed in Corollary~1 of \citet{Andreasen_2001}.
\qed
 \end{exmp}

   \begin{exmp}{\bf Explosion to infinity.} Let us now consider the situation with $ I = \R$, $\mathfrak{s} (\cdot) \equiv 1$ and $  \mathfrak{b} (x) = \exp( \beta x)$ for some $ \beta > 0$.   In this case the Brownian motion $ X^o (\cdot) = \xi + W^o(\cdot)$ has no explosions, i.e., $\Prob^o (S^o = \infty) =1$; the process $ X(\cdot)$ with dynamics
\begin{equation*}
 X(\cdot) \,=\, \xi + \int_0^{\, \cdot} \exp \big( \beta X(t) \big)\, \dx t + W(\cdot)
\end{equation*} 
      explodes (to $+ \infty$) in finite time, that is $ \Prob (S < \infty) =1$; and 
 $$
 \Prob (S>T) = \E^o \left[  \exp \left( { 1 \over  \beta} \left( e^{ \beta X^o (T)}- e^{ \beta \xi} \right)- { \beta \over 2} \int_0^T e^{ \beta X^o (t)} \dx t - { 1  \over 2} \int_0^T e^{2 \beta X^o (t)} \dx t \right)  \right] 
 $$
 holds for all $ T \in (0, \infty)$. From Formula~(1.30.7) on page~196 in \citet{Borodin_handbook}, we   obtain the distribution of the explosion time as 
\begin{align*}
	\Prob(S>T)  &= \beta^2   \int_{-\infty}^\infty  \int_0^\infty  { \exp \left( ( e^{ \beta z} - e^{ \beta \xi} ) / \beta - (\beta y/2) \right)\over 8 \sinh ( \beta y / 2)}
		\exp \left(  - {   e^{ \beta \xi} + e^{\beta z}   \over \beta   \tanh (\beta y /2) }\right) \\ 
		&~~~~~~~~~~~~~~~~~~~~~~~~~~~~~~~~~~~~~~~~~~~~\cdot  \mathbf{ i}_{\beta^2 T / 8}   \left(  {  2 e^{ \beta (\xi  +  z )  /2}  \over \beta   \sinh (\beta y /2) }\right) \dx y \, \dx z\,,
		  \end{align*}
where 
\begin{equation*}
\mathbf{ i}_{t} (z) := {\cal L}^{-1}_t \left( I_{\sqrt{t}} \,(z) \right)=
{   z  \exp(\pi^2 / (4 t)) \over \pi \sqrt{ \pi t}} \int_0^\infty \exp\left(-z  \cosh (u) - \frac{u^2}{4 t }\right)  \sinh (u)  \sin \left( {  \pi  u \over 2  t} \right)  \dx u
\end{equation*}
denotes an inverse Laplace transform related to the modified {Bessel} function of the second type in \eqref{ModBes}. We obtain
\begin{align*}
	\Prob(S>T)  
		&= \frac{2 \exp(2 \pi^2/(\beta^2 T)) \sqrt{2} \eta}{\pi \beta \sqrt{\pi T}} \int_{0}^\infty  \int_0^\infty \int_0^\infty   { \exp \left( \zeta^2 - \eta^2  - \psi  \right)\over   (\sinh (\psi))^2}
		\exp \left(  - {  \eta^2 +\zeta^2   \over   \tanh (\psi) }\right) \\
		&~~~~~~~~~~~~~~~~~~~~~~~~~~~~~~~~~~~\cdot   \exp\left(-\, 2  \eta  \zeta \, {  \cosh (u) \over \,  \sinh (\psi)\, }  - \frac{2 u^2}{\beta^2 T }\right)  \sinh (u)  \sin \left( {  4 \pi  u \over \beta^2 T} \right) 
 \dx u \, \dx \psi \, \dx   \zeta,
  \end{align*}
 after applying the substitutions  of $\,\beta y/2\,$ by $\psi$, and of $\exp(\beta z/2)  /\sqrt{\beta\,}\,$ by $\zeta$, and the change of variable $ \eta =  \exp\left(\beta \xi/2\right)/\sqrt{\beta\,}$.
\qed
     \end{exmp}

\appendix

\section{Appendix:  Feller  test }
\label{FT}

A well-known criterion for deciding whether the diffusion $X(\cdot)$ can explode or not (to wit, whether we have $\Prob(S<\infty)>0$  or $ \Prob(S<\infty)=0$) is Feller's test; see Theorem~5.5.29 in \citet{KS1}. This criterion relies on the {\it Feller  test  function} 
defined as
\begin{align}  
\label{E v}
w(x)  := \int_c^x \left(  \frac{\exp \left(2 F(z) \right)}{\mathfrak{s}^2 (z)}  \left( \int_z^x \exp \big( - 2 F(y) \big)\right)   \dx y  \right) \dx z \,, \qquad x  \in I 
\end{align}
for   some fixed constant $c \in I$,  where $\, F(\cdot)\,$ is the function of \eqref{F_anti}. 
   Feller's test then states that $\Prob(S=\infty) = 1$ holds, if and only if $w(\ell+) = w(r-) = \infty\,$.  Moreover, the finiteness or nonfiniteness of $w(\cdot)$ does not depend on the choice of the constant $c$.  Alternative characterizations are discussed  in Subsection~\ref{SS:equivalent}.

 Since the function $1/\mathfrak{s}^2(\cdot)$ is locally square-integrable thanks to the assumption in \eqref{2}, and since the anti-derivative $F(\cdot)$ is continuous, we have $\,w(\ell_n) + w(r_n) < \infty\,$  and, in particular,    
\begin{align}  \label{eq:Sn finite}
	\Prob(S_n < \infty) = 1\,,
\end{align}
 for all $n \in \N$; in fact, we even have $\E[S_n] < \infty$, see Proposition~5.5.32 in \citet{KS1}.

There are several situations in which the Feller test function  $w(\cdot)$ can be simplified:
\begin{itemize}
	\item If the function $\mathfrak{s}(\cdot)$ is differentiable and $\mathfrak{b}(\cdot) = a \,\mathfrak{s}^\prime(\cdot)$  for some $\,a \in \R\,$, then
		\begin{align*}
			w(x) =  \int_c^x \left(  \mathfrak{s}^{2a-2} (z) \int_z^x \mathfrak{s}^{-2a}(y) \dx y \right) \dx z.
		\end{align*}
		In particular, we have the following two special cases:
		\begin{itemize}
			\item $a = 1/2$: 
		\begin{align} \label{eq: feller 1/2}
			w(x) =  \int_c^x \left(  \frac{1}{\mathfrak{s} (y)} \int_c^y\frac{1}{\mathfrak{s}(z)} \dx z \right) \dx y = \frac{1}{2} \left(   \int_c^x \frac{1}{\mathfrak{s}(z)} \dx z\right)^2;
		\end{align}
			\item $a = 1$: 
		\begin{align*}
			w(x) =  \int_c^x \frac{z-c}{ \mathfrak{s}^{2} (z)} \dx z.
		\end{align*}
		\end{itemize}
	\item If $\mathfrak{b}(\cdot) = a \, \mathfrak{s}(\cdot)$ for some $a \in \R \setminus \{0\}$, then
		\begin{align*}
			w(x) = \frac{1}{2a} \int_c^x \frac{\,1-\exp(2a(z-x))}{\mathfrak{s}^2(z)\,}\, \dx z\,.
		\end{align*}
	\item If $\mathfrak{b}(x) = a \, \mathfrak{s}(x)/x$ for all $x \in \R$ for some $a \in \R \setminus \{1/2\}$, then
		\begin{align} \label{F generalized}
			w(x) = \frac{1}{1-2a}  \int_c^x \frac{ \,x(z/x)^{2a}-z\,}{\mathfrak{s}^2(z)} \, \dx z\,.
		\end{align}
		In particular, with $a=0$, we have
		\begin{align}  \label{F}
			w(x) =  \int_c^x \frac{\,x-z\,}{ \mathfrak{s}^{2} (z)} \,\dx z\,.
		\end{align}
\end{itemize}
 From these observations we obtain the following useful corollary.
 
\begin{prop}{\bf Conditions for explosions in special cases.} 
\label{P conditions}
	Feller's test simplifies in the following cases:
	\begin{itemize}	
		\item[(i)]  Suppose that the function $\mathfrak{s}(\cdot)$ is differentiable (without loss of generality, we then assume that $\mathfrak{s}(\cdot) >0$ on $I$), and that $\,\mathfrak{b}(\cdot) = \mathfrak{s}^\prime(\cdot)/2\,$. Then $\,\Prob(S=\infty) = 1\,$ holds, if and only if for some $c \in I$ we have 
		\begin{align*}
			\int_\ell^c \frac{\,\dx z\,}{\mathfrak{s}(z)}  = \infty = \int_c^r \frac{\,\dx z\,}{\mathfrak{s}(z)}   \,.
		\end{align*}
				\item[(ii)] Suppose that  $I = (0,\infty)$ and $\mathfrak{b}(\cdot) = 0$. Then $w(0+) = \infty$ holds, if and only if we have 
\begin{equation}
\label{DS0}
\int_{0}^1  \frac{z}{\mathfrak{s}^2 (z) } \dx z =\infty\,;
\end{equation}			
		 moreover, $w(\infty ) = \infty$. Therefore,   $\Prob(S=\infty) = 1$ holds in this case,   if and only if \eqref{DS0}  does.
		\item[(iii)] Suppose that  $\,I = (0,\infty)\,$ and $\,\mathfrak{b}(x) = \mathfrak{s}(x)/x\,$ for all $\,x \in I$. Then $w(\infty )= \infty$ holds, if and only if we have 
\begin{equation}
\label{DS}
\int_{1}^\infty  \frac{z}{\mathfrak{s}^2 (z) } \dx z =\infty\,;
\end{equation}			
		moreover, $w(0+) = \infty\,$. Therefore,   $\Prob(S=\infty) = 1$ holds in this case,   if and only if \eqref{DS} does.
	\end{itemize}
\end{prop}
\begin{proof}
	Part~(i) is an application of the identity in \eqref{eq: feller 1/2} and Feller's test. For part~(ii), we first observe that
the condition $w(0+) = \infty\,$, along with the representation in \eqref{F} with $c=1$, imply that \eqref{DS0} holds. For the reverse direction, we assume that \eqref{DS0} holds and define $\,m(y) := \int_y^1 \mathfrak{s}^{-2}(z) \dx z\,$ for all $y \in [0,1]$.  If $\,\limsup_{y \downarrow 0} (y m(y)) < \infty\,$ holds, then $w(0+) = \infty$ by \eqref{F} with $c = 1$. If $\limsup_{y \downarrow 0} (y m(y))) = \infty$ holds, we observe that we may rewrite \eqref{F} as 
$$w(0+) =  \lim_{x \downarrow 0} \int_x^1 \frac{z-x}{\mathfrak{s}^2(z)} \dx z = \lim_{x \downarrow 0} \left(\int_x^1 \int_x^z \frac{1}{\mathfrak{s}^2(z)} \dx y\right) \dx z = 
\int_0^1 m(y)  \dx y \geq \int_0^\mathfrak{y} m(y) \dx y \geq \mathfrak{y} m(\mathfrak{y})$$ for all $\mathfrak{y} \in [0,1]$, which yields $w(0+) = \infty$. Moreover, we have $w(x) \geq (x-2) \int_1^2 \mathfrak{s}^{-2}(z) \dx z$ for all $x \geq 2$, which yields   $w(\infty ) = \infty$.

For part~(iii), we use \eqref{F generalized} with $a = 1$ and $c = 1$ to obtain the representation
		\begin{align} \label{F generalized2}
			w(x) =   \int_1^x \frac{z - z^2/x}{\mathfrak{s}^2(z)} \dx z =  \int_1^x \left(\frac{1}{y^2} \int_1^y \frac{z^2}{ \mathfrak{s}^2(z)} \dx z \right) \dx y.
		\end{align}
	In the same manner as above we have $w(0+) = \infty\,$ under \eqref{DS}, and   need only   show that \eqref{DS} implies $w(\infty ) = \infty\,$. We  may assume again  $\,\limsup_{y \uparrow \infty}  (k(y)/y) = \infty$, where $\,k(y) = \int_1^y z^2 \mathfrak{s}^{-2}(z) \dx z\,$ for all $y \in [1,\infty)$, as otherwise the statement is clear. Under this assumption, we obtain from \eqref{F generalized2} that
	\begin{align*}
		w(\infty ) \,\geq \,\int_y^\infty \frac{\,k(z)\,}{z^2} \, \dx z \, \geq \, k(y)  \int_y^\infty \frac{\dx z}{z^2} \, = \,\frac{k(y)}{y}
	\end{align*}
holds	for all $y \geq 1$, which concludes the proof.
\end{proof}

Of course, additional statements in the form of the last proposition can be proved; we focused here on those   needed in the body of the paper. 
An alternative proof of the equivalence in part~(ii) under slightly stronger assumptions and using the Ray-Knight theorem,  appears in Theorem~1.4 of \citet{DShir}.  In Example~\ref{Ex h-transform}, we discuss the setup of part~(iii) in Proposition~\ref{P conditions},  and its connection to part~(ii) will then become clearer; see also Corollary~\ref{C:mg property}.

\section{Appendix: Explosions as Brownian exits via    Lamperti transformations}
 \label{Doss}

In   Subsection~\ref{GenGir} we saw how to remove drifts by changing the underlying probability measure. We discuss now  ways to transform the dispersion term into a constant,  by distorting the space as  $Y(\cdot) = h(X(\cdot))$, for some strictly increasing and continuous function $h: (\ell,r) \rightarrow (\widetilde{\ell}, \widetilde{r}\,)$ and suitable $-\infty \leq \widetilde{\ell} < \widetilde{r} \leq \infty$. We shall assume in this section that the function $ \mathfrak{s} (\cdot)$ is   continuously differentiable on the interval $I=(\ell,r)$; without loss of generality, we shall also assume that $\mathfrak{s}(\cdot)$ is strictly positive. 
\newpage
We shall consider the function 
\begin{align*}
	h_c(x) = \int_c^x \frac{\,\dx z\,}{\mathfrak{s}(z)} \,, \quad  x \in (\ell,r)
\end{align*}
for   some $\,c \in (\ell, r)$. We observe that $h_c(\cdot)$ is strictly increasing and twice differentiable. We set $\,\widetilde{\ell} = h_c(\ell) := \lim_{x \downarrow \ell} h_c(x) \in [-\infty, \infty)$ and $\,\widetilde{r} = h_c(r) := \lim_{x \uparrow r} h_c(x) \in (-\infty, \infty]$ and define the process $Y(\cdot)$ via 
$$
Y(t) := h_c(X(t)) ~ \text{ for all } t \in [0, S) ~~\text{ and } ~~Y(t) = \lim_{u \uparrow S} h_c(X(u)) ~ \text{ for all } t \in [S, \infty).
$$
 It is clear that   $\lim_{t \uparrow S} Y(t) \in \{\widetilde{\ell}, \widetilde{r}\,\}$ holds on $\{S < \infty\}$, and that the new process $Y(\cdot)$ leaves its state space $\widetilde{I} :=(\widetilde{\ell}, \widetilde{r}\,)$ at exactly the time $S$.  In particular,  the (distribution of the) explosion time $S$ of $X(\cdot)$ is exactly the (distribution of the) explosion time $\widetilde{S}$ of $Y(\cdot)$.

With $\,\vartheta_c: \widetilde{I} \rightarrow I$ denoting the inverse function of $h_c$, simple stochastic calculus yields that
\begin{align*}
	\dx Y(t) = \left(\mathfrak{b}(\vartheta_c(Y(t))) - \frac{\mathfrak{s}^\prime(\vartheta_c(Y(t)))}{2 }\right) \dx t + \dx W(t), \qquad Y(0) = h_c(\xi) =: \widetilde{\xi}  \in \widetilde{I}
\end{align*}
hold for all $t \in [0, S)$.  In particular, with the function $\,\nu: \widetilde{I} \rightarrow I\,$ defined by 
\begin{align*}
	\nu(y) := \mathfrak{b}(\vartheta_c(y)) -\frac{1}{\,2\,} \,\mathfrak{s}^\prime(\vartheta_c(y))\,,\qquad y \in \widetilde{I}
\end{align*}
we have the simple dynamics
\begin{align*}
	\dx Y(t) = \nu(Y(t)) \dx t + \dx W(t), \qquad Y(0) = h_c(\xi) = \widetilde{\xi}
\end{align*}
for all $t \in [0, S)$. 
As \citet{Lamperti1964} stresses \citep[see also Section~3.4 in][]{McKean_1969}, this equation can be solved pathwise by simple Picard iterations, without any need for stochastic integration  or other probabilistic tools, as long as the function $\nu(\cdot)$ is Lipschitz continuous.  In particular, if 
\begin{align} \label{30}
	\mathfrak{b}(\cdot) = \frac{1}{\,2\,} \,\mathfrak s^\prime(\cdot) + \mub
\end{align}
for some constant $\mub \in \R$, the computation of the time to explosion reduces to to computing the distribution of the time to explosion for a Brownian motion with drift.

 This approach has been generalized in an effort to study the pathwise solvability of stochastic differential equations
 by  \citet{Lamperti1964}, \citet{Doss1977}, \citet{Sussmann1978}, and \citet{Karatzas_Ruf_DS}.

\section{Appendix: A technical  result on uniqueness in distribution}
 \label{A:uniqueness}

 In this appendix, we revisit the diffusion $X^o(\cdot)$ of Subsection~\ref{sec1.2}. Theorem~5.5.7 in \citet{KS1} yields the uniqueness in the sense of the probability distribution of the stochastic integral equation in \eqref{13}; however, the proof of Theorem~\ref{Thm1} requires a slightly stronger uniqueness statement. Towards this end, and using the notation of the paragraph right before Theorem~\ref{Thm1}, we call a function   $\thetab: C ([0, \infty)) \rightarrow [0,\infty]$ a  {\it stop-rule,} if 
 \begin{equation*}
\big\{ \mathfrak{w} \in C ([0, \infty)) : \thetab (\mathfrak{w}) \le t \big\}  \in  {\cal B}_t:= \varphi^{-1}_t  ( {\cal B}  )  \qquad \text{holds for all } ~\,0 \le t < \infty\,.
\end{equation*}

\begin{prop}{\bf Uniqueness up to stopping times.}  \label{P:uniqueness appendix}
Let  $\thetab$ denote a stop-rule satisfying $\mathfrak{w} (  t)\in I$  for all  $\, \mathfrak{w} \in  C ([0, \infty))\,$ and $\,0 \le t < \thetab (\mathfrak{w})\,$.  The   solution of the
``stopped'' version of the stochastic integral equation in \eqref{13}, namely
\begin{equation}
\label{113}
\widehat{X} (\cdot)  = \xi + \int_0^{\, \cdot \wedge \thetab (\widehat{X})}  \mathfrak{s} \big( \widehat{X} (t) \big) \, \dx \widehat{W} (t)\,,
\end{equation}
is unique in the sense of the probability distribution.
\end{prop}
\begin{proof}   
Let us consider {\it any} weak solution of \eqref{113} and denote $\varrho = \thetab (\widehat{X})$.
The solvability of \eqref{113} implies that  the time change $\,A  (\cdot) :=\int_0^{ \cdot}  \mathfrak{s}^2 (\widehat{X} (t))\1_{\{\varrho >t\}} \dx t \, $ is well-defined, and we note that this process is the quadratic variation of the continuous local martingale  $M  (\cdot)= \int_0^{ \cdot  }  \mathfrak{s}  (\widehat{X} (t)) \1_{\{\varrho >t\}}\dx \widehat{W}(t) $.

\smallskip
According to the {Dambis-Dubins-Schwarz} theory \citep[see Theorem~3.4.6 and Problem~3.4.7 in][]{KS1} there exists a standard Brownian motion $B (\cdot)$ on (an extension of) the underlying probability space, such that 
$$\widehat{X}  (\cdot) =\xi + \int_0^{\cdot}  \mathfrak{s} \big( \widehat{X}  (t) \big) \1_{\{\varrho >t\}} \dx \widehat{W} (t) = \xi + B  ( A  (\cdot) ).
$$
We consider now the inverse time change $ \Gamma  (\theta) := \inf \{ t \ge 0 : A  (t) > \theta\}$ for all $0 \le \theta < A  (\varrho)$ and $ \Gamma  (\theta) :=\infty$ for all $ \theta \ge A  (\varrho)$, and note that
$$
\Gamma^\prime (\theta)= { 1 \over  A^\prime (\Gamma  (\theta))}={ 1 \over   \mathfrak{s}^2 \big( \widehat{X}  (\Gamma  (\theta))\big)}\,, \qquad \hbox{thus} \qquad \Gamma   (\theta)=\int_0^\theta { \dx r \over \mathfrak{s}^2 ( \xi + B  (r))}  
$$
for all $ 0 \le \theta < A  (\varrho)$. Next, we define the function $ \mathfrak{s}_* (x) :=  \mathfrak{s} (x) \1_{  (\ell, r) } (x)+ \1_{ \R \setminus (\ell, r)}(x) $ and the corresponding time change
$$
\Gamma_*   (\theta)=\int_0^\theta { \dx r \over  \mathfrak{s}_*^2 ( \xi + B  (r))} \, , \qquad 0 \le \theta <\infty\,,
$$
along with its inverse $A_* (t):=  \inf \{ \theta \ge 0 : \Gamma_*  (\theta) > t\}$  for all $0 \le t <\infty$. 

We note that we have the ordinary integral equations 
\begin{align}
	A(\cdot \wedge \varrho) &= \int_0^{\,\cdot \wedge \varrho} \mathfrak{s}^2\big(\xi + B(A(t))\big) \, \dx t\,,\label{eq:C2}\\
	A_*(\cdot \wedge \Gamma_*(\taub)) &= \int_0^{\,\cdot \wedge \Gamma_*(\taub)} \mathfrak{s}^2\big(\xi + B(A_*(t))\big) \, \dx t \nonumber
\end{align}
in the notation of \eqref{TAU}. We also have $\varrho \leq \Gamma_*(\taub)$, a consequence of our    assumption that $\xi + B(A(t)) \in I$ for all $t < \varrho$. The uniqueness of solutions to \eqref{eq:C2} implies then    $A (\cdot  \wedge \varrho) =  A_*(\cdot \wedge \varrho)$;  therefore, the process $X_*  (\cdot) := \xi + B  ( A_*  (\cdot) ) $ satisfies  $ \widehat{X}(\cdot )= \widehat{X} (\cdot \wedge \varrho) =  X_*(\cdot \wedge \varrho)$ and 
$$
\varrho= \thetab \big(\widehat{X} (\cdot)\big) =  \thetab \big(\widehat{X} (\cdot \wedge \varrho)\big)  =  \thetab \left(X_* (\cdot \wedge \varrho)\right) = \thetab \left( X_* (\cdot)\right) .
$$
We note that the process $A_*(\cdot)$ is $\,\F^{B } (\infty)-$measurable, thus so is $X_*(\cdot)$ and hence also $\widehat{X}(\cdot)$.  In particular, the distribution of $ \widehat{X} ( \cdot) $ is determined uniquely. 
\end{proof}

\bibliography{aa_bib}{}
\bibliographystyle{apalike}

\end{document}